\newcommand{\citefull}{}
\pgfplotsset{compat=1.17}
\newcommand{\envalias}[2]{\newenvironment{#1}{\begin{#2}}{\end{#2}}}
\newcommand{\N}{\mathbb{N}}
\newcommand{\Prob}{\mathbb{P}}
\newcommand{\ra}{\rightarrow}
\newcommand{\R}{\mathbb{R}}
\newtheorem{remark}{Remark}
\newtheorem{theorem}{Theorem}[section]
\newtheorem{lemma}[theorem]{Lemma}
\newtheorem{corollary}[theorem]{Corollary}
\newtheorem{definition}[theorem]{Definition}
\newtheorem{proposition}[theorem]{Proposition}
\newcommand{\runningexample}[1]%
{
\textbf{Running example:}
\textit{#1}
}
\begin{document}
\title{Stabilizability of Vector Systems with Uniform Actuation Unpredictability}

\author{%
  \IEEEauthorblockN{Rahul Arya, Chih-Yuan Chiu, and Gireeja Ranade}
  \IEEEauthorblockA{University of California, Berkeley \\
                    Email: \{rahularya@,chihyuan\_chiu@,ranade@eecs.\}berkeley.edu}
}
\maketitle

\begin{abstract}

Control strategies for vector systems typically depend on the controller's ability to plan out future control actions. However, in the case where model parameters are random and time-varying, this planning might not be possible. 

This paper explores the fundamental limits of a simple  system, inspired by the intermittent Kalman filtering model, where the actuation direction is drawn uniformly from the unit hypersphere. The model allows us to focus on a fundamental tension in the control of underactuated vector systems --- the need to balance the growth of the system in different dimensions.

We characterize the stabilizability of $d$-dimensional systems with symmetric gain matrices by providing tight necessary and sufficient conditions that depend on the eigenvalues of the system. The proof technique is slightly different from the standard dynamic programming approach and relies on the fact that the second moment stability of the system can also be understood by examining any arbitrary weighted two-norm of the state.

\end{abstract}

\section{Introduction}
\label{sec: intro1}

The use of decentralized control for the observation and control of cyberphysical systems, biomedical processes and other engineering applications has inspired much interest in modeling parameter uncertainty and communication bottlenecks in control systems. This paper studies a simple underactuated control system \eqref{Eqn:introsys} 
\begin{align} \label{Eqn:introsys}
    \mathcal{S}: \hspace{5mm} \textbf{X}[n+1] &= A \textbf{X}[n] + \textbf{B}[n] u[n], \\ \nonumber
    \textbf{Y}[n] &= \textbf{X}[n],
\end{align}
where the direction for the control action changes uniformly at random at each time step. Here, $\textbf{X}[n]\in \mathbb{R}^d$ denotes the state and the symmetric matrix $A \in \mathbb{R}^{d\times d}$ is gain of the system $\mathcal{S}$. The actuation direction $\textbf{B}[n]\in \mathbb{R}^d$ is drawn uniformly at random from the unit hypersphere, and $u[n]$ is the scalar control that chooses the magnitude of the step taken in direction $\textbf{B}[n]$ at time $n$.

Since the system model is changing at every time step, the controllers cannot plan for future actions. This model allows us to understand the impact of this inability to accurately plan, while having to stabilize system growth in multiple directions simultaneously

The impact of parameter uncertainty in control and estimation is highlighted by the intermittent Kalman filtering model~\cite{sinopoli2004kalman, Park2011Intermittent} and related works on dropped control packets~\cite{Schenato2007FoundationsofControlandEstimation, Elia2005Remote, imer2006optimal, garone2012lqg, matveev2004}. In particular, these works show how critical the ability to plan is for control systems --- while the critical erasure probability for estimation in the intermittent Kalman filtering setting depends only on the maximal eigenvalue of the system, the critical erasure probability of the dual problem with dropped controls depends on the product of all eigenvalues --- thus control can tolerate far less uncertainty than estimation~\cite{Ramnarayan2014SideInformationinControlandEstimation}. This makes intuitive sense, since a controller must commit to an action at every timestep without knowledge of the future, whereas the observer in the estimation problem can revisit its decisions in the future. 

In particular, in the case of dropped controls, the results of~\cite{Schenato2007FoundationsofControlandEstimation, Elia2005Remote} show that $p_e \cdot (\lambda_1^2 \lambda_2^2) \leq 1$ is the necessary and sufficient condition for second-moment stabilizability, where $p_e$ is the probability of dropped controls and $\lambda_1, \lambda_2 >1$ are the eigenvalues of the system. In contrast for the 2D version of~\eqref{Eqn:introsys}, we show that the system is second-moment stabilizable if and only if $\frac{1}{\lambda_1^2 + \lambda_2^2} \cdot (\lambda_1^2 \lambda_2^2) \leq 1$. In our result, as $\lambda_1^2 + \lambda_2^2$ increases (subject to a constant product $\lambda_1 \lambda_2$), the system is easier to stabilize, whereas in the case of dropped controls only the product of the eigenvalues matters. Similarly, in the $d$-dimensional case, it is not just the total growth (the product of eigenvalues) but also the ``distribution'' of the eigenvalues that matters.

\subsection{Related work}
Parameter uncertainty in systems has been long studied by the control and communication communities, and we build on this work. Some of the most fundamental results were established for the stochastic case through the Uncertainty Threshold Principle~\cite{Athans1977UncertaintyThreshold}, and also in the robust control literature~\cite{zhou1998essentials}, as well as in works such as~\cite{Todorov2003AMinimalIntervention, okano2014arxiv}. These contrast with works such as \cite{brockett2000, tatikonda, anytime, garone2007lqg, qiu2013} that study control in the presence of communication channels, which cause errors, as opposed to parameter randomness from the model itself. Informational and communication bottlenecks in control have been extensively studied, and the books such as~\cite{yuksel2013stochastic, fangtowards} as well as ~\cite{ranade2014active} provide a detailed overview. Older works that considered uncertain parameters in systems but most have limited their understanding to time invariant linear controllers~\cite{willems1976, rajasekaran1971optimum, dekoning1992}. The controller we propose is time-varying but linear, with a dependence on the random direction realized at each time.

Many previous works around parameter uncertainty in control systems highlight the importance of the controller to plan and have worked towards developing an understanding of the ``value of information in control''~\cite{witsenhausen1971separation, rockafellar1976nonanticipativity,dempster1981expected, flaam1985nonanticipativity, davis1989anticipative, back1987shadow}. More recent work has examined the importance of side-information in control problems~\cite{Martins2007FundamentalLimitationsofDisturbanceAttenuation, garone2012lqg, Ramnarayan2014SideInformationinControlandEstimation, Ranade2015ControlCapacity, Ranade2016ControlCapacityForSideInformation, Li2019OnlineOptimalControlwithPredictions}.

Control capacity~\cite{Ranade2015ControlCapacity} considers how parameter uncertainty can explicitly be thought of as an informational bottleneck, and has been explored primarily only in the scalar context and in limited vector systems. Our current work takes a first step towards expanding the notion of control capacity to more general vector systems.  
The previous works of \cite{Hariyoshi2015VectorControlSystems, Hariyoshi2016ControlwithActuationAnticipation} 
studied the control capacity (i.e. the maximum growth rate that a linear system with uncertain actuation can tolerate~\cite{Ranade2015ControlCapacity}) of a system that is defined similar to~\eqref{Eqn:introsys}; however, in addition, after every time step the system is multiplied by a $d\times d$ rotation matrix $\Phi[n+1]$ that uniformly spins the entire system in $\mathbb{R}^d$, i.e. $\textbf{X}[n+1] = \Phi[n+1] \textbf{X}'[n+1]$, where $\textbf{X}'[n+1]$ is a shadow state given as \mbox{$\textbf{X}'[n+1] = A \textbf{X}[n] + \textbf{B}[n] u[n]$}. 
Ordinarily, the matrix gain $A$ of a vector system
affects the orientation of the system state as well as its magnitude. The authors of \cite{Hariyoshi2015VectorControlSystems} circumvent this issue by introducing the random rotation matrix $\Phi[n+1]$ that \say{resets} the orientation of the state $\textbf{X}[n]$ at each time step and maintains the isotropy of the system. The current paper does away with the simplifying $\Phi[n+1]$ that mixes the eigenvalues. The condition for the 2D case in~\cite{Hariyoshi2015VectorControlSystems} states that the system is stabilizable if $\lambda_1^2 + \lambda_2^2 \leq 4$; thus a system with $\lambda_1 = 1.1$ and $\lambda_2 = 2.4$ is not stabilizable under the model in~\cite{Hariyoshi2015VectorControlSystems} but is stable in our current model. On the other hand, we can also identify higher dimensional systems where the system would be stable with the extra spin per~\cite{Hariyoshi2015VectorControlSystems} but are not stable in our current model (e.g. $\lambda_1 = \lambda_2 = 0.5$ and $\lambda_3=\lambda_4 = 1.5$). This is because in some cases the spinning advantages the system by \say{averaging out} large eigenvalues, but also prevents it from fully taking advantage of small eigenvalues, since those also get averaged out by the spin.

\subsection{Main contributions}
Our main results characterize the second-moment stability of a $d$-dimensional system with a symmetric gain matrix.
The system is second-moment stabilizable if \mbox{$r = (m-1)\left(\sum_{i:|\lambda_i|>1} \lambda_i^{-2}\right)^{-1} < 1$}, and only if $r \le 1$, where $m$ is the number of eigenvalues such that $|\lambda_i| > 1$.

Our proof approach relies on an important observation: $\mathbf{X}[n]^T \mathbf{X}[n]$ is uniformly bounded if and only if $\mathbf{X}[n]^T P \mathbf{X}[n]$ is bounded, for any positive definite matrix $P$. That is, if a system is stable, it must be stable in any basis representation. 
When this \say{weight matrix} $P$ is appropriately selected, a greedy control strategy that aims to minimize the weighted norm $\mathbf{X}[n]^T P \mathbf{X}[n]$ at each time step in fact is optimal, in the sense that it stabilizes the system whenever it is possible for any control strategy to do so.

Section \ref{sec: Problem_formulation} formulates the problem, and Section~\ref{sec: results} provides main results, which are specialized to the 2D case in Section~\ref{sec: main_result_2D}. Section \ref{sec: main_result_General} provides proofs of the main lemmas and theorems. Section \ref{sec: conclusion_future_work} presents future directions.

\section{Problem Setup}
\label{sec: Problem_formulation}

Consider the following system setup:
\begin{align} \label{Eqn: System Dynamics}
   \mathcal{S}: \hspace{5mm} \textbf{X}[n+1] &= A \textbf{X}[n] + \textbf{B}[n]u[n], \\ \nonumber 
    \textbf{Y}[n] &= \textbf{X}[n],
\end{align}
where, for each time $n$, $\textbf{X}[n] \in \R^d$ denotes the system state, perfectly observed through $Y[n] \in \R^d$, with the initial state $X[0]$ arbitrarily fixed. Here, $u[n] \in \R$ denotes the scalar control, selected by the controller using $\textbf{B}[0], \cdots, \textbf{B}[n], \textbf{Y}[0], \cdots, \textbf{Y}[n], u[0], \cdots, u[n-1]$, that couples the system. $A$ denotes the fixed, symmetric, non-singular gain matrix, known to the controller, that is applied to the state at each time, while $\textbf{B}[n]$ denotes the random actuation direction drawn i.i.d. and uniformly from $\mathcal{S}_d$, the $d$-dimensional unit hypersphere:
\begin{equation} \label{Eqn: d-dimensional unit hypersphere}
    \mathcal{S}_d := \{(x_1, \cdots, x_d)| x_1^2 + \cdots x_d^2 = 1 \}.
\end{equation}
More formally, $\textbf{B}[n]$ may be component-wise generated i.i.d. from the standard normal distribution, then normalized. Since the distribution of $\textbf{B}[n]$ is rotation-invariant, and the symmetric matrix $A$ is orthogonally diagonalizable, we assume without loss of generality below that $A$ is diagonal.

For each $n \in \N$, we use $\textbf{Y}_0^n$, $\textbf{B}_0^n$, $U_0^n$ to denote the observations, actuation vectors, and controls from times $0$ to $n$, respectively, i.e. \mbox{$\textbf{Y}_0^n := (\textbf{Y}[0], \cdots, \textbf{Y}[n])$} etc.  %
Let:
\begin{align}
    \mathcal{G}_n := \{ g[n](\cdot): (\textbf{B}_0^n, \textbf{Y}_0^n, U_0^{n-1}) \rightarrow u[n] \}
\end{align}
denote the set of all permissible control strategies at time $n$. Define by $g_0^n(\cdot)$ a permissible control strategy from time 0 to time $n$ (inclusive), and denote by $\mathcal{G}_0^n := (\mathcal{G}_0, \cdots, \mathcal{G}_n)$ the set of all such control strategies from time 0 to time $n$.

At each time $n$, the following sequence of events occurs:
\begin{enumerate}
    \item $\textbf{B}[n]$ is drawn uniformly from $\mathcal{S}_d$, independent of all random vectors up to time $n$.
    
    \item The controller observes $\textbf{B}[n]$ and selects $u[n]$ based on $\textbf{Y}_0^{n}, \textbf{B}_0^n, U_0^{n-1}$.
    
    \item The system propagates to the next state, i.e. $\textbf{X}[n+1]$ and $\textbf{Y}[n+1]$ are generated according to the dynamics~\eqref{Eqn: System Dynamics}.  
\end{enumerate}

The controller aims to stabilize the system $\mathcal{S}$ in a second-moment sense, as defined below.

\begin{definition}[\textbf{Second-Moment Stabilizable}] \label{Def: Second-Moment Stabilizable}
The system $\mathcal{S}$ is said to be \textbf{second-moment stabilizable} if there exists a sequence of controls $U_0^\infty$, such that:
\begin{align}
    \limsup_{n \ra \infty} \mathbb{E}[\textbf{\emph{X}}[n]^T  \textbf{\emph{X}}[n]] < \infty.
\end{align}
\end{definition}

We will also use the notion of equivalent norms, induced by a symmetric positive definite matrix $P$, to discuss second-moment stabilizability.

\begin{proposition} \label{Prop: Change of Coordinates, 2nd-Moment Stabilizability}
Let $\textbf{\emph{Z}}[n]$ be a $\R^d$-valued random variable for each $n \in \N$, and let $P \in \R^{d \times d}$ be any symmetric positive definite matrix.
Then, for each $n \in \N$:
\begin{align} \label{Eqn: Prop, Change of Coordinates, Norm Equivalence}
    \Vert P^{-1} \Vert_2^{-1} \cdot \Vert \textbf{\emph{Z}}[n] \Vert_2^2 \leq \textbf{\emph{Z}}[n]^T P \textbf{\emph{Z}}[n] \leq \Vert P \Vert_2 \cdot \Vert \textbf{\emph{Z}}[n] \Vert_2^2.
\end{align}
In particular, $\limsup\limits_{n \ra \infty} \mathbb{E}\big[\textbf{\emph{Z}}[n]^T \textbf{\emph{Z}}[n] \big] < \infty$ if and only if $\limsup\limits_{n \ra \infty} \mathbb{E}\big[\textbf{\emph{Z}}[n]^T P \textbf{\emph{Z}}[n] \big] < \infty$.

\end{proposition}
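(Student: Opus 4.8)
The plan is to reduce the matrix inequality to the scalar spectral bounds on the Rayleigh quotient of $P$, and then transfer these bounds through the expectation and the $\limsup$. First I would diagonalize: since $P$ is symmetric positive definite, write $P = Q \Lambda Q^T$ with $Q$ orthogonal and $\Lambda = \mathrm{diag}(\mu_1, \dots, \mu_d)$, all $\mu_i > 0$. Setting $\textbf{W}[n] := Q^T \textbf{Z}[n]$, orthogonality gives $\Vert \textbf{W}[n] \Vert_2 = \Vert \textbf{Z}[n] \Vert_2$ and $\textbf{Z}[n]^T P \textbf{Z}[n] = \sum_{i=1}^d \mu_i W_i[n]^2$. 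Bounding each $\mu_i$ below by $\mu_{\min} := \min_i \mu_i$ and above by $\mu_{\max} := \max_i \mu_i$ yields $\mu_{\min} \Vert \textbf{Z}[n] \Vert_2^2 \leq \textbf{Z}[n]^T P \textbf{Z}[n] \leq \mu_{\max} \Vert \textbf{Z}[n] \Vert_2^2$, pathwise, i.e. for every realization. The identities $\Vert P \Vert_2 = \mu_{\max}$ and $\Vert P^{-1} \Vert_2 = \mu_{\min}^{-1}$, again from the spectral theorem applied to $P$ and to $P^{-1}$, then give exactly \eqref{Eqn: Prop, Change of Coordinates, Norm Equivalence}.

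For the second claim, I would take expectations of the pathwise inequality \eqref{Eqn: Prop, Change of Coordinates, Norm Equivalence} --- which is legitimate since it holds for each outcome --- using linearity and monotonicity of $\mathbb{E}$, to obtain $\Vert P^{-1}\Vert_2^{-1}\, \mathbb{E}[\Vert \textbf{Z}[n]\Vert_2^2] \leq \mathbb{E}[\textbf{Z}[n]^T P \textbf{Z}[n]] \leq \Vert P \Vert_2\, \mathbb{E}[\Vert \textbf{Z}[n]\Vert_2^2]$ for every $n$. Taking $\limsup_{n \to \infty}$ and using that $\Vert P \Vert_2$ and $\Vert P^{-1}\Vert_2$ are finite positive constants (because $P$ is positive definite, hence invertible) then gives both implications: finiteness of $\limsup_n \mathbb{E}[\textbf{Z}[n]^T \textbf{Z}[n]]$ forces finiteness of $\limsup_n \mathbb{E}[\textbf{Z}[n]^T P \textbf{Z}[n]]$ via the upper bound, and the converse follows from the lower bound.

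I do not anticipate a genuine obstacle here; the content is essentially the spectral theorem together with monotonicity of expectation. The only points needing a moment's care are: (i) checking that the operator-norm constants in the statement really are $\mu_{\max}$ and $\mu_{\min}^{-1}$, which is where symmetry and positive-definiteness of $P$ enter essentially --- for a non-symmetric $P$ the Rayleigh quotient is not controlled by $\Vert P \Vert_2$ in this clean way; and (ii) observing that the inequalities are deterministic and hold for each sample path, so no extra integrability hypothesis on $\textbf{Z}[n]$ is needed before passing to expectations.
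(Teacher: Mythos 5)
Your proof is correct and follows essentially the same route as the paper's: orthogonal diagonalization of $P$ plus the identities $\Vert P \Vert_2 = \mu_{\max}$ and $\Vert P^{-1}\Vert_2 = \mu_{\min}^{-1}$, then monotonicity of expectation and $\limsup$. If anything, your version is slightly cleaner, since you establish the displayed inequality pathwise before taking expectations, whereas the paper bounds the expectations directly (using a $P^{1/2}$ factorization for the lower bound); the content is the same.
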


\begin{proof}
The linear algebra proof details are in \mbox{Appendix~\ref{AppSubsec: Prop: Change of Coordinates, 2nd-Moment Stabilizability}}\citefull.
\end{proof}

\emph{Notation:} In this paper, lower-case letters (e.g. $r, m, w$) denote scalars, while upper-case letters (e.g. $M, W$) denote vectors or matrices. Subscripts indicate matrix or vector elements, e.g. $M_{ij}$ indicates the $(i, j)$-th element of the matrix $M$. Boldface letters (e.g. \textbf{B}) indicate random variables. %

\section{Main Results}
\label{sec: results}

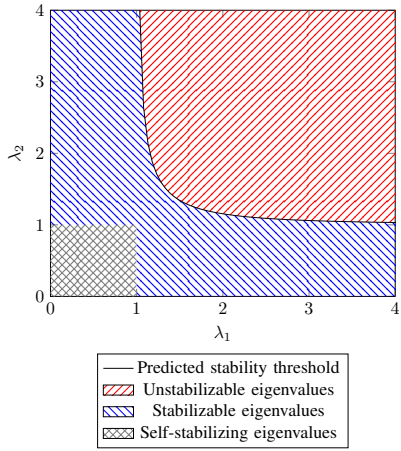
\begin{figure}[t]
\centering
\resizebox{0.3\textwidth}{!}{\begin{tikzpicture}
    \begin{axis}[
        xlabel=$\lambda_1$, ylabel=$\lambda_2$,
        xmin=0, xmax=4,
        ymin=0, ymax=4,
        legend style={at={(0.5,-0.2)},anchor=north}
     ]
     \addlegendentry{Predicted stability threshold}
     \addlegendentry{Unstabilizable eigenvalues}
     \addlegendentry{Stabilizable eigenvalues}
     \addlegendentry{Self-stabilizing eigenvalues}
    \path[name path=high] (axis cs:0,4) -- (axis cs:4,4);
    \addplot[name path=thresh, samples=200]{sqrt(x^2 / (x^2 - 1))};
    \path[name path=box] (axis cs:0,1) -| (axis cs:1,0) -- (axis cs:4,0);
    \path[name path=low] (axis cs:1,0) -| (axis cs:0,1);
    \addplot[pattern=north east lines, pattern color=red] fill between[of=thresh and high];
    \addplot[pattern=north west lines, pattern color=blue] fill between[of=box and thresh];
    \addplot[pattern=crosshatch, pattern color=gray] fill between[of=low and box];
 \end{axis}
\end{tikzpicture}}
\caption{Threshold for stabilizability for a 2D system with $A = \text{diag}(\lambda_1, \lambda_2)$.}
\label{fig: 2d-stability}
\end{figure}

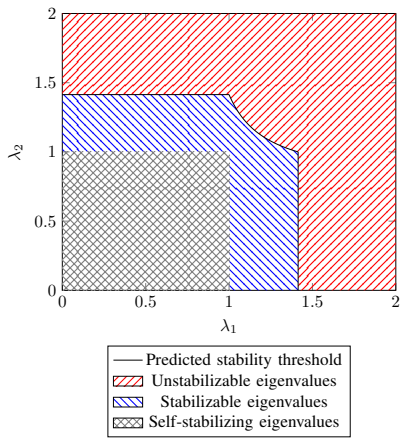
\begin{figure}[t]
\centering
\resizebox{0.3\textwidth}{!}{\begin{tikzpicture}
    \begin{axis}[
        xlabel=$\lambda_1$, ylabel=$\lambda_2$,
        xmin=0, xmax=2,
        ymin=0, ymax=2,
        legend style={at={(0.5,-0.2)},anchor=north}
     ]
     \addlegendentry{Predicted stability threshold}
     \addlegendentry{Unstabilizable eigenvalues}
     \addlegendentry{Stabilizable eigenvalues}
     \addlegendentry{Self-stabilizing eigenvalues}
    \path[name path=high] (axis cs:0, 1.414) -- (axis cs:0,2) -| (axis cs:2,0) -- (axis cs:1.414,0);
    \addplot +[mark=none, black,forget plot] coordinates {(0, 1.414) (1, 1.414)};
    \addplot +[mark=none, black,forget plot] coordinates {(sqrt(2), 0) (sqrt(2), 1)};
    \addplot[name path=thresh, samples=200, domain=1:sqrt(2)]{sqrt(2 * x^2 / (3*x^2 - 2))};
    \path[name path=upperbox] (axis cs:1.414,0) -- (axis cs:1,0) |- (axis cs:0,1) -- (axis cs:0,1.414);
    \path[name path=box] (axis cs:0,1) -| (axis cs:1,0);
    \path[name path=low] (axis cs:1,0) -| (axis cs:0,1);
    \addplot[pattern=north east lines, pattern color=red] fill between[of=thresh and high];
    \addplot[pattern=north west lines, pattern color=blue] fill between[of=upperbox and thresh];
    \addplot[pattern=crosshatch, pattern color=gray] fill between[of=low and box];
    \end{axis}
\end{tikzpicture}}
\caption{Threshold for stabilizability for a 4D system with $A = \text{diag}(\lambda_1, \lambda_1, \lambda_2, \lambda_2)$. Notice that if $\lambda_1 > \sqrt{2}$ or $\lambda_2 > \sqrt{2}$, the system must be unstabilizable.}
\label{fig: 3d-stability}
\end{figure}

Define a diagonal gain matrix $A$ as below:
\begin{align}
    \label{Eqn: A matrix, in Main Results}
    A = \text{diag}\{\lambda_1, \cdots, \lambda_d\}.
\end{align}

\begin{restatable}{theorem}{mainthm}
\label{Thm: Main, d-dimensional}
Suppose $|\lambda_i| \ne 1$ for all $1 \le i \le d$. Let $m$ of these eigenvalues $|\lambda_i| > 1$. Set:
\begin{align}
    \label{Eqn: General Stability Result, Main Results Section}
    r := \frac{m-1}{\sum_{i:|\lambda_i| > 1} \lambda_{i}^{-2}}.
\end{align}
The system $\mathcal{S}$ is second-moment stabilizable if $r < 1$.
Conversely, if $\mathcal{S}$ is second-moment stabilizable, then $r \leq 1$.
\end{restatable}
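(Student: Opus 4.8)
The plan is to reduce both directions to one existence statement about a map between probability simplices and then run mirror-image one-step arguments. Since $A$ is diagonal, write $T=\{i:|\lambda_i|>1\}$ and $S=\{i:|\lambda_i|<1\}$, so $m=|T|$. Two elementary facts do the work. (i) For a diagonal $Q\succ0$ and vectors $y,b$, completing the square gives $\min_{u\in\R}(y+ub)^TQ(y+ub)=y^TQy-(b^TQy)^2/(b^TQb)$. (ii) If a random vector $\beta$ has a law invariant under flipping the sign of any single coordinate, all cross terms vanish, $\E[\beta_i\beta_j/\sum_k q_k\beta_k^2]=0$ for $i\ne j$, so $\E\big[(\beta^TQy)^2/(\beta^TQ\beta)\big]=\sum_i q_i^2 y_i^2\,\E[\beta_i^2/\sum_k q_k\beta_k^2]$. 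In both of our uses $\beta$ is the uniform point on the sphere (or a coordinate subvector of it), which is sign-symmetric, has no coordinate equal to $0$ almost surely, and satisfies $\E|\log\beta_i^2|<\infty$.

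The key lemma I would isolate is: for any law $\mu$ on $\R^\ell$ that is sign-flip symmetric, has $\mu(\beta_i=0)=0$ for every $i$, and satisfies $\E_\mu|\log\beta_i^2|<\infty$, the map $F_i(c)=c_i\,\E_\mu[\beta_i^2/\sum_j c_j\beta_j^2]$ carries the open probability simplex $\{c:c_i>0,\ \sum_i c_i=1\}$ \emph{onto} itself (that $\sum_i F_i(c)=1$ is automatic). Rather than a degree-theory argument, I would argue variationally: fix a target $w$ in the open simplex and consider $G(t)=\E_\mu\big[\log\sum_j e^{t_j}\beta_j^2\big]-\langle w,t\rangle$ on $\R^\ell$. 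This $G$ is finite, $C^1$, convex (a log-sum-exp in $t$), and invariant under $t\mapsto t+c\mathbf 1$; and from $\log\sum_j e^{t_j}\beta_j^2\ge\max_j t_j+\log\beta_{j_0}^2$ one obtains $G(t)\ge\mathrm{const}+(\min_i w_i)\big(\max_j t_j-\min_j t_j\big)$, so $G$ is coercive modulo $\mathbf 1$ and attains a minimum, at which $\nabla G=0$ reads precisely $c_i\,\E_\mu[\beta_i^2/\sum_j c_j\beta_j^2]=w_i$ with $c_i=e^{t_i}$. \textbf{This lemma is the crux of the argument}; the remaining work is bookkeeping together with the arithmetic linking $r$ to feasibility in the simplex.

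For sufficiency, suppose $r<1$. Then $\sum_{i\in T}(1-\lambda_i^{-2})=m-\tfrac{m-1}{r}<1$, so we may choose a target $w^*$ in the open $d$-simplex with $w^*_i>1-\lambda_i^{-2}$ for $i\in T$ and $w^*_i>0$ for $i\in S$. Apply the lemma with $\mu$ the law of $\mathbf B[n]$ to get $p\succ0$ with $p_iq_i(p)=w^*_i$ for $q_i(p):=\E[\mathbf B[n]_i^2/\sum_k p_k\mathbf B[n]_k^2]$, and set $P=\mathrm{diag}(p)$. Run the greedy controller $u[n]=-\mathbf B[n]^TPA\mathbf X[n]/(\mathbf B[n]^TP\mathbf B[n])$; this is a legal strategy and $|u[n]|\le\mathrm{const}\cdot\|\mathbf X[n]\|$ since $\mathbf B[n]^TP\mathbf B[n]\ge\lambda_{\min}(P)$, so every moment stays finite. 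By (i)--(ii), conditioning on $\mathcal F^-_n$, the data available just before $\mathbf B[n]$ is revealed,
\[
\E\big[\mathbf X[n+1]^TP\mathbf X[n+1]\mid\mathcal F^-_n\big]=\sum_i p_i\lambda_i^2\,(1-w^*_i)\,\mathbf X[n]_i^2 .
\]
For $i\in T$, $\lambda_i^2(1-w^*_i)<\lambda_i^2\cdot\lambda_i^{-2}=1$; for $i\in S$, $\lambda_i^2(1-w^*_i)\le\lambda_i^2<1$; hence the right side is $\le\rho\,\mathbf X[n]^TP\mathbf X[n]$ for some $\rho<1$. Iterating, $\E[\mathbf X[n]^TP\mathbf X[n]]\le\rho^n\,\mathbf X[0]^TP\mathbf X[0]\to0$, and Proposition~\ref{Prop: Change of Coordinates, 2nd-Moment Stabilizability} upgrades this to $\limsup_n\E[\mathbf X[n]^T\mathbf X[n]]<\infty$.

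For necessity I prove the contrapositive: if $r>1$, no strategy stabilizes (for any $\mathbf X[0]$ with a nonzero component along some unstable coordinate, the interesting regime). Choose $\rho\in(1,r)$ small enough that also $\rho<\lambda_i^2$ for every $i\in T$, which is possible since $r>1$ and each such $\lambda_i^2>1$. Then $1-\rho\lambda_i^{-2}>0$ and $\sum_{i\in T}(1-\rho\lambda_i^{-2})=m-\rho\tfrac{m-1}{r}>1$, so $w^\dagger_i:=(1-\rho\lambda_i^{-2})/\sum_{j\in T}(1-\rho\lambda_j^{-2})$ lies in the open $T$-simplex and satisfies $w^\dagger_i\le1-\rho\lambda_i^{-2}$. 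Apply the lemma with $\mu$ the law of $(\mathbf B[n]_j)_{j\in T}$ to get $c\succ0$ on $T$ with $c_i\,\E[\mathbf B[n]_i^2/\sum_{j\in T}c_j\mathbf B[n]_j^2]=w^\dagger_i$, and set $V_n=\sum_{i\in T}c_i\,\mathbf X[n]_i^2$. For \emph{any} control, $V_{n+1}\ge\min_{u\in\R}\sum_{i\in T}c_i(\lambda_i\mathbf X[n]_i+\mathbf B[n]_i u)^2$, so by (i)--(ii),
\[
\E[V_{n+1}\mid\mathcal F^-_n]\ \ge\ \sum_{i\in T}c_i\lambda_i^2(1-w^\dagger_i)\,\mathbf X[n]_i^2\ \ge\ \rho\sum_{i\in T}c_i\,\mathbf X[n]_i^2\ =\ \rho V_n ,
\]
using $1-w^\dagger_i\ge\rho\lambda_i^{-2}$. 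Thus $\E[V_n]\ge\rho^n V_0\to\infty$, and $\mathbf X[n]^T\mathbf X[n]\ge(\max_{i\in T}c_i)^{-1}V_n$ forces $\E[\mathbf X[n]^T\mathbf X[n]]\to\infty$; so $\mathcal S$ is not second-moment stabilizable.
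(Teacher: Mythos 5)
Your proposal is correct, and while it shares the paper's central idea --- stabilize (or obstruct) via a weighted two-norm $\mathbf X^T P\mathbf X$ whose diagonal weights solve the simplex equations $p_i\,\E[\mathbf b_i^2/\sum_k p_k\mathbf b_k^2]=v_i$, then run a one-step completion-of-squares argument --- it departs from the paper's proof in three substantive ways. First, you prove the existence lemma (the paper's Lemma~\ref{Lemma: Analysis, Existence of w(N, 1), w(N, 2)}) variationally, minimizing the convex, translation-invariant functional $G(t)=\E[\log\sum_j e^{t_j}\beta_j^2]-\langle w,t\rangle$ and reading off the target equations from $\nabla G=0$; the paper instead inducts over coordinates with continuity/monotonicity helper lemmas (Lemmas~\ref{AppLemma: Single Parameter Real Analysis Result} and \ref{Lemma: Continuous Inverse Function}). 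Your coercivity-modulo-$\mathbf 1$ bound is right, and stating the lemma for any sign-flip-symmetric law with $\E|\log\beta_i^2|<\infty$ is what lets you reuse it later. Second, for sufficiency you fold the stable eigenvalues directly into the weight matrix by giving them small positive targets and settle for a contraction factor $\rho<1$ rather than the exact geometric relation $W[n]=r^{N-n}P$; this eliminates the paper's Case~1a/Case~2 split and the entire dropped-controls machinery (Proposition~\ref{AppProp: Riccati Recursion, Properties, Dropped Controls}, Lemmas~\ref{AppLemma: Riccati Recursion} and \ref{AppLemma: Main}). Third, for necessity you bound $\E[V_{n+1}\mid\mathcal F_n^-]\ge\rho V_n$ pointwise-optimally on the unstable coordinates using the raw subvector of $\mathbf B[n]$ (legitimate because your lemma needs only sign symmetry), replacing the paper's induction on dimension, embedded-subsystem coupling (Lemma~\ref{Lemma: Embedded System}), and hypersphere projection lemma (Lemma~\ref{Lemma: Expanding or Projecting Dimension}). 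What the paper's route buys is the exact optimal cost recursion and the explicit value of $r$ as the decay rate of the value function; what yours buys is a shorter, uniform argument with fewer moving parts. Two cosmetic points: your identity $\sum_{i\in T}(1-\lambda_i^{-2})=m-\tfrac{m-1}{r}$ degenerates when $m\le 1$ (where $r=0$ or is undefined), though the needed inequality $\sum_{i\in T}(1-\lambda_i^{-2})<1$ holds trivially there; and your necessity argument, like the paper's, implicitly assumes the initial state has a nonzero component along the unstable coordinates.
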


This threshold for the 2D case is shown in Fig. \ref{fig: 2d-stability}, and the threshold for a 4D system with $A = \text{diag}(\lambda_1, \lambda_1, \lambda_2, \lambda_2)$ is shown in Fig. \ref{fig: 3d-stability}. The plotted points indicate the stabilizability of a numerical simulation of $\mathcal{S}$ using the optimal (in the weighted-norm sense) control strategy we derive.

Our proof follows the following structure. Using dynamic programming, we first show that the optimal control input $u[n]$ at the $n$th time step is to minimize $\|W[n+1]^{1/2}\mathbf{X}[n+1]\|_2$, for some positive definite diagonal matrix $W[n+1]$. We then express $W[n+1]$ as a deterministic function of $W[n]$ using a Riccati-type recursion, independent of the realizations of the $\mathbf{B}[n]$. Under certain conditions~\eqref{Eqn: Lemma, Main, Constraint on lambdas} on the $\{ \lambda_i \}$, we show (without explicitly constructing it) that there exists some \mbox{$P = W[0]$} such that $W[n] = r^n W[0]$ for the scalar $r$ defined in \eqref{Eqn: General Stability Result, Main Results Section}. If $r < 1$, then $\mathbb{E}[\mathbf{X}[n]^TW[n]\mathbf{X}[n]] = r^n \mathbb{E}[\mathbf{X}[0]^TW[0]\mathbf{X][0]}$, which tends to zero as $n \to \infty$, which gives stability.

If these conditions on the eigenvalues are not satisfied, but all the $|\lambda_i| > 1$, we show both that $r > 1$ and that there exists an embedded subsystem $\overline{\mathcal{S}}$ of $\mathcal{S}$ that is not second-moment stabilizable, so $\mathcal{S}$ cannot be stabilizable. If some $|\lambda_i| < 1$, we show that $\mathcal{S}$ is stabilizable only if the embedded subsystem with state matrix $\text{diag}(\{\lambda_i : |\lambda_i| > 1\})$ is stabilizable. Putting these cases together yields the final result. 

It is possible to extend Theorem \ref{Thm: Main, d-dimensional} to apply to the case when some $|\lambda_i| = 1$. We define $r$ analogously to~\eqref{Eqn: General Stability Result, Main Results Section} considering all eigenvalues that are $\geq 1$. If $r<1$ and some $|\lambda_i| = 1$ we may perturb $|\lambda_i|$ to become slightly larger --- a sufficiently small perturbation will keep $r < 1$. Increasing the magnitude of the $\lambda_i$ cannot make an unstabilizable system stabilizable, so our original system must also have been stabilizable. The other direction works similarly.

We hope to extend these results to the general matrices $A$, however, understanding the case where $A$ is symmetric and thus can be diagonalized by an orthonormal basis is a natural first step. In our setup, the control maintains the interactions across dimensions and thus the true vector nature of the problem, but the orthonormal basis representation makes the dynamic program tractable.

\section{An Illustrative 2D Example}
\label{sec: main_result_2D}

We first illustrate our result for the special case of $d=2$. This gives a flavor of the overall proof --- illustrating how an appropriate choice of $P$ lets us determine the stabilizability of the system --- while avoiding the complexities that arise when $d > 2$. In the general $d$-dimensional case, we cannot directly find a choice of $P$ --- instead, roughly speaking, we non-constructively show the existence of an appropriate $P$ under certain conditions, and show that the system is not stabilizable when these conditions are not satisfied (see Section~\ref{sec: main_result_General}).%

\begin{theorem} \label{Thm: Main, 2-dimensional}
Suppose the system $\mathcal{S}$ is given with its gain matrix $A$ defined by
$A := \text{diag}(\lambda_1, \lambda_2)$,
where $\lambda_1, \lambda_2 \ne 0$. Then the system $\mathcal{S}$ is second-moment stabilizable if and only if $r \leq 1$, where $r > 0$ is given by $r = \left(\tfrac{1}{\lambda_1^2} + \tfrac{1}{\lambda_2^2}\right)^{-1}$.
\end{theorem}
\begin{remark}
Notice here we include all eigenvalues in the definition of $r$ instead of just those with magnitudes larger than 1 as in \eqref{Eqn: General Stability Result, Main Results Section} from Thm.~\ref{Thm: Main, d-dimensional}. 
Although the values of $r$ in Thms.~\ref{Thm: Main, d-dimensional} and \ref{Thm: Main, 2-dimensional} are slightly different, the results are consistent. Note that $r$ is not necessarily the optimal decay rate of the system. 
\end{remark}

We begin with some auxiliary results that help us evaluate $\mathbb{E}[\mathbf{X}[N]^TP\mathbf{X}[N]]$ in terms of $\mathbf{X}[n]$ at an earlier timestep $n < N$, by setting up a Ricatti-like recursion. These results are useful in the 2D case as well as the more general proof.

\begin{restatable}{proposition}{ricattiproperties} \label{Prop: Riccati Recursion, Properties}
Fix any $N \in \N$ and any diagonal, symmetric positive definite matrix $P$. Let the sequence of matrices $W[n], \textbf{\emph{M}}[n]$ be defined as below:
\begin{align} \label{Eqn: Thm, Riccati Recursion for W, at time N}
    W[N] &= P, \\ \label{Eqn: Thm, Riccati Recursion for W, Iterative Recursion}
    W[n] &= A^T \big( W[n+1] - \mathbb{E}[\textbf{\emph{M}}[n]] W[n+1] \big) A \\
    \textbf{\emph{M}}[n] &= \frac{W[n+1] \textbf{\emph{B}}[n]\textbf{\emph{B}}[n]^T}{\textbf{\emph{B}}[n]^T W[n+1] \textbf{\emph{B}}[n]} \label{Eqn: Thm, Riccati Recursion for W, M[n] Definition}.
\end{align}
Then each $W[n]$ is diagonal and symmetric positive definite.
\end{restatable}

\begin{proof}
The proof follows by backwards induction from $n = N$, using symmetry and linear algebraic arguments. Details are deferred to Appendix \ref{AppSubsec: Prop: Riccati Recursion, Properties}\citefull.
\end{proof}

\begin{lemma} \label{Lemma: Riccati Recursion}
Consider the system $\mathcal{S}$ starting at the fixed, known initial state $X[0]$ at time $0$. Fix any diagonal, symmetric positive definite matrix $P \in \R^{d \times d}$. Then, for each $N \in \N$, $n = 0, 1, \cdots, N$:
\begin{equation} \label{Eqn: Thm, Riccati Recursion for W, Main Statement}
    \min_{g_0^N \in \mathcal{G}_0^N} \mathbb{E}\left[ \textbf{\emph{X}}[N]^T P \textbf{\emph{X}}[N] \right] = \min_{g_0^n \in \mathcal{G}_0^n} \mathbb{E}\left[ \textbf{\emph{X}}[n]^T W[n] \textbf{\emph{X}}[n] \right],
\end{equation}
where \mbox{$\{W[n] \mid n = 0, 1, \cdots, N\}$} is given by the recursive formulas \eqref{Eqn: Thm, Riccati Recursion for W, at time N} and \eqref{Eqn: Thm, Riccati Recursion for W, Iterative Recursion}, and $W[N] = P$. In particular:
\begin{align} \nonumber
    \min_{g_0^N \in \mathcal{G}_0^N} \mathbb{E}\left[ \textbf{\emph{X}}[N]^T P \textbf{\emph{X}}[N] \right] =  \textbf{\emph{X}}[0]^T W[0] \textbf{\emph{X}}[0].
\end{align}
\end{lemma}

\begin{proof}
\eqref{Eqn: Thm, Riccati Recursion for W, Main Statement} basically states that the minimization over $g_0^N := (g_0, \cdots, g_n, g_{n+1}, \cdots g_N) \in \mathcal{G}_0^N$ can be rewritten into two parts---the minimization over $g_0^n := (g_0, \cdots, g_n)$ and the minimization over $g_{n+1}^N := (g_{n+1}, \cdots, g_N)$---and that the latter can be solved in closed form using dynamic programming, by exploiting the system dynamics \eqref{Eqn: System Dynamics}. The full proof is deferred to Appendix \ref{AppSubsec: Thm: Riccati Recursion}\citefull.
\end{proof}

\begin{lemma} \label{Lemma: Geometric Series with r}
Suppose there exists some $r > 0$ and symmetric positive definite matrix $P$ such that $W[n] = r^{N-n} \cdot P$ for each $n = 0, 1, \cdots, N-1$. Then the system $\mathcal{S}$ is second-moment stabilizable if $r < 1$. Furthermore, if $\mathcal{S}$ is second-moment stabilizable, then $r \le 1$.

\end{lemma}

\begin{proof}
For each $N \in \N$, \eqref{Eqn: Thm, Riccati Recursion for W, Main Statement} from Lemma \ref{Lemma: Riccati Recursion} implies: 
\small{
\[
\min_{g_0^N \in \mathcal{G}_0^N} \mathbb{E}\left[ \textbf{X}[N]^T P \textbf{X}[N] \right] = X[0]^T W[0] X[0] = r^N X[0]^T P X[0].
\]} \normalsize
Since $X[0]$ and $P$ are fixed, the sequence $\Big(\min_{g_0^N \in \mathcal{G}_0^N} \mathbb{E}\left[ \textbf{X}[N]^T P \textbf{X}[N] \right], N = 0, 1, \cdots \Big)$ is bounded if and only if $r \leq 1$ and approaches 0 if and only if $r < 1$. The lemma now follows from Proposition \ref{Prop: Change of Coordinates, 2nd-Moment Stabilizability}.
\end{proof} 

We now proceed to the main proof of Thm.~\ref{Thm: Main, 2-dimensional}. %

\begin{proof}
By Proposition \ref{Prop: Change of Coordinates, 2nd-Moment Stabilizability}, the system is second-moment stabilizable if and only if there exists some positive definite matrix $P$ such that $\lim_{n \ra \infty} \mathbb{E}\big[\textbf{X}[n]^T P \textbf{X}[n] \big] < \infty$.

To this end, set $P = \text{diag}(\lambda_1^4, \lambda_2^4)$ and define $W[N], \cdots, W[0]$ recursively via \eqref{Eqn: Thm, Riccati Recursion for W, Iterative Recursion}, with $W[N] = P$. Let the elements of the actuation vector $\textbf{B}[n]$ be given by \mbox{$\textbf{B}[n] = (\textbf{b}_1, \textbf{b}_2)$}. By Proposition \ref{Prop: Riccati Recursion, Properties}, each $W[n]$ becomes a diagonal, symmetric positive definite matrix
whose diagonal entries $w_{n, i}$ for $i = 1,2$ are given recursively by:
\small

\begin{align} \nonumber
    w_{N, 1} &= \lambda_1^4, \quad w_{N, 2} = \lambda_2^4, \\ 
    \label{Eqn: Thm, Main, 2D, w(n, 1) recursion}
    w_{n, i} &= \lambda_1^2 w_{n+1, i} \left(1 - \mathbb{E}\left[ \frac{\textbf{b}_i[n]^2 \cdot w_{n+1, i} }{\textbf{b}_1[n]^2 \cdot w_{n+1, 1} + \textbf{b}_2[n]^2 \cdot w_{n+1, 2}} \right] \right),
\end{align}

\normalsize

Below, we show via backwards induction that
the sequence of matrices $\{W[n]\}_{n=0}^N$ forms a geometric series with $W[n] = r^{N-n} \cdot P$,
where $r = \left( \frac{1}{\lambda_1^2} + \frac{1}{\lambda_2^2} \right)^{-1}$.
    
To establish this, we must show that
for each $n \in \N$:
\begin{align} \label{Eqn: Thm, Main, 2D, W ratios}
    \frac{w_{n, 1}}{w_{n+1, 2}} = \frac{w_{n, 2}}{w_{n+1, 2}} = r.
\end{align}
Assume that \eqref{Eqn: Thm, Main, 2D, W ratios} holds at time $n+1$, for some $n = 0,1,\cdots,N-1$. Now, define $\alpha := \lambda_2^4/\lambda_1^4$, and note that we can write the expectation in \eqref{Eqn: Thm, Main, 2D, w(n, 1) recursion} as:
\begin{align*}
    &\mathbb{E}\left[ \frac{w_{n+1, 1} \cdot \textbf{b}_1[n]^2}{w_{n+1, 1} \cdot \textbf{b}_1[n]^2 + w_{n+1, 2} \cdot \textbf{b}_2[n]^2} \right] \\
    = \hspace{0.5mm} &\mathbb{E}\left[ \frac{\textbf{b}_1[n]^2}{\textbf{b}_1[n]^2 + \alpha \cdot \textbf{b}_2[n]^2} \right] = \frac{1}{2\pi} \int_0^{2\pi} \frac{\cos^2 \theta}{\cos^2 \theta + \alpha \sin^2\theta} \hspace{0.5mm} d\theta \\
    = \hspace{0.5mm} &\frac{1}{\sqrt{\alpha} + 1} = \frac{\lambda_1^2}{\lambda_1^2 + \lambda_2^2},
\end{align*}
where we have parameterized 
the unit circle in $\R^2$ by using the uniform distribution of the polar angle $\theta$ on $[0, 2\pi)$.
Hence,%
\begin{align*}
    w_{n, 1} 
    &= \lambda_1^2 w_{n+1, 1} \cdot \left(1 - \frac{\lambda_1^2}{\lambda_1^2 + \lambda_2^2} \right) = \frac{\lambda_1^2 \lambda_2^2}{\lambda_1^2 + \lambda_2^2} \cdot w_{n+1, 1}, \\
    w_{n, 2} 
    &= \lambda_2^2 w_{n+1, 2} \cdot \frac{\lambda_1^2}{\lambda_1^2 + \lambda_2^2} = \frac{\lambda_1^2 \lambda_2^2}{\lambda_1^2 + \lambda_2^2} \cdot w_{n+1, 2}.
\end{align*}
Thus, we have:
\begin{align*}
    \frac{w_{n, 1}}{w_{n+1, 1}} = \frac{w_{n, 2}}{w_{n+1, 2}} &=  \frac{\lambda_1^2 \lambda_2^2}{\lambda_1^2 + \lambda_2^2} = \left( \frac{1}{\lambda_1^2} + \frac{1}{\lambda_2^2} \right)^{-1},
\end{align*}
as claimed. Thus, $W[n] = r^{N-n} W[N]$ for each $0 \le n \le N$.
By Lemma \ref{Lemma: Geometric Series with r}, this completes the proof. 
\end{proof}

\section{Proof of Thm.~\ref{Thm: Main, d-dimensional}}
\label{sec: main_result_General}
For the general result, we will consider two cases. In Case 1, all the $|\lambda_i| > 1$. In Case 2, at least one of the eigenvalues $|\lambda_i| < 1$. We focus on $|\lambda_i| \ne 1~\forall i$ here though our results extend for the $|\lambda_i| = 1$ case as mentioned earlier.

Case 1 is itself split into two sub-cases (1a and 1b), depending on whether or not the eigenvalues of our system satisfy a particular technical condition in~\eqref{Eqn: Lemma, Main, Constraint on lambdas}. In Case 1a, the proof proceeds similarly to in the two-dimensional case, except that we only show the existence of an appropriate $P$, such that  $W[n] = r^{N-n} \cdot P$, without constructing it directly. In Case 1b, we  show that a subsystem $\overline{\mathcal{S}}$ of our system exists that is not second-moment stabilizable. 
To handle Case 2, we show how the control strategy from Case 1 can be modified by occasionally dropping controls to stabilize the system.

Detailed proofs are deferred to Appendices \ref{AppSubsec: Lemma: Analysis, Existence of w(N, 1), w(N, 2)} through \ref{AppSubsec: Case 2}\citefull, and we provide sketches here.

\subsection{Case 1a}
\label{subsec: auxiliary lemmas}

In this case we assume all eigenvalues of $A$ satisfy the condition~\eqref{Eqn: Lemma, Main, Constraint on lambdas}. First we investigate conditions such that $W[n] = r^{N-n} P$ for some positive definite $P$. Then we show that when these conditions hold, stability is possible if $r < 1$, but not if $r > 1$. This is formalized in Lemma \ref{Lemma: Main}.%

\begin{restatable}{lemma}{lemmamain} \label{Lemma: Main}
Consider the system $\mathcal{S}$ with fixed, known initial state $X[0]$. If the eigenvalues of $A$ satisfy the condition
\begin{align} \label{Eqn: Lemma, Main, Constraint on lambdas}
    v_i^* := 1 - \frac{(d-1)\lambda_i^{-2}}{\sum_{j=1}^d \lambda_j^{-2}} > 0
\end{align}
for all $1 \le i \le d$, then we can show that there exists some $r \in \R$, $r > 0$ and some diagonal, symmetric positive definite matrix $P \in \R^{d \times d}$,
    $P := \text{diag}\{ p_1, p_2, \cdots, p_d \}$
such that the matrices $W[N], \cdots, W[0]$ generated by the recursion \eqref{Eqn: Thm, Riccati Recursion for W, at time N} from $W[N] = P$ satisfy: $W[n] = r^{N-n} \cdot P$ for each $n = 0, 1, \cdots, N$.
Moreover, $r$ is given by
    \mbox{$r = (d-1)/\sum_{i=1}^d \lambda_i^{-2}.$}
\end{restatable}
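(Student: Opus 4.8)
The plan is to read the fixed-shape requirement $W[n]=r^{N-n}P$ as an eigenvalue problem for the (deterministic) Riccati map. Recall that when the controller plays the greedy input minimizing $\E_{\mathbf B}[\mathbf X[n+1]^TW\mathbf X[n+1]]$, the residual is $\mathbf X[n]^T\Phi(W)\mathbf X[n]$ with
\[
  \Phi(W)\;=\;A\big(W - W\,M(W)\,W\big)A, \qquad M(W) := \E_{\mathbf B}\!\left[\frac{\mathbf B\mathbf B^T}{\mathbf B^TW\mathbf B}\right],
\]
i.e. $W[n]=\Phi(W[n+1])$ is the recursion \eqref{Eqn: Thm, Riccati Recursion for W, at time N}. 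I would first record two structural facts I will use repeatedly: (i) $\Phi$ is positively homogeneous of degree one, since $M(cW)=c^{-1}M(W)$; and (ii) if $W$ is diagonal then $M(W)$ is diagonal, because the off-diagonal entry $\E_{\mathbf B}[B_iB_j/(\mathbf B^TW\mathbf B)]$ changes sign under $B_i\mapsto -B_i$ and hence vanishes. Thus $\Phi$ maps diagonal matrices to diagonal matrices, and with $W=\operatorname{diag}(w_1,\dots,w_d)$ and $M_{ii}(W)=\E_{\mathbf B}[B_i^2/\sum_k w_kB_k^2]$ the recursion is the decoupled scalar system $\Phi(W)_{ii}=\lambda_i^2\,w_i(1-w_iM_{ii}(W))$.

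By homogeneity it suffices to exhibit one positive definite diagonal $P$ with $\Phi(P)=rP$: then $W[N]=P$, $W[N-1]=\Phi(P)=rP$, and inductively $W[n]=\Phi(r^{N-n-1}P)=r^{N-n}P$. Componentwise $\Phi(P)=rP$ reads $\lambda_i^2p_i(1-p_iM_{ii}(P))=rp_i$, i.e.
\[
  p_i\,M_{ii}(P)\;=\;1 - r\,\lambda_i^{-2}, \qquad i=1,\dots,d.
\]
The constant $r$ is then forced: summing over $i$ and using the partition-of-unity identity $\sum_i p_iM_{ii}(P)=\E_{\mathbf B}\!\big[\sum_i p_iB_i^2\,/\,\sum_k p_kB_k^2\big]=1$, valid for every positive $P$, gives $1=d-r\sum_i\lambda_i^{-2}$, so $r=(d-1)/\sum_i\lambda_i^{-2}>0$, which is the claimed value; it also identifies $1-r\lambda_i^{-2}=v_i^*$ and shows $\sum_i v_i^*=1$. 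Hence the hypothesis \eqref{Eqn: Lemma, Main, Constraint on lambdas} says exactly that $(v_1^*,\dots,v_d^*)$ lies in the interior of the probability simplex, and everything reduces to one existence claim.

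\emph{Main step (the crux).} Show that the map $\Psi(P):=\big(p_1M_{11}(P),\dots,p_dM_{dd}(P)\big)$ attains the value $v^*=(v_i^*)_i$ at some $P$ with all $p_i>0$. I would do this variationally: substitute $p_i=e^{s_i}$ and set $G(s):=\E_{\mathbf B}\big[\log\sum_k e^{s_k}B_k^2\big]-\langle v^*,s\rangle$. This $G$ is finite, convex (an expectation of a log-sum-exp plus a linear term), $C^1$ with $\nabla G(s)_i=\Psi(P)_i-v_i^*$, and — since $\sum_i v_i^*=1$ — invariant under $s\mapsto s+c\mathbf 1$, so we minimize it over $H=\{s:\sum_i s_i=0\}$. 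The one genuinely nontrivial point, which I expect to be the main obstacle, is coercivity of $G$ on $H$: for a unit vector $\omega\in H$, $\omega$ has a strictly positive largest coordinate $\omega_j$ and $\langle v^*,\omega\rangle<\omega_j$ strictly (because $v^*$ is strictly positive while $\omega$ is non-constant); bounding $\E_{\mathbf B}[\log\sum_k e^{t\omega_k}B_k^2]\ge t\omega_j+\E[\log B_j^2]$ with $\E[\log B_j^2]$ finite (indeed $B_j^2$ is Beta-distributed) gives $G(t\omega)\ge t(\omega_j-\langle v^*,\omega\rangle)+O(1)$, and the coefficient is bounded below by a positive $\delta$ uniformly over the compact set of admissible $\omega$. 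So $G$ attains its minimum on $H$ at some $s^*$; the first-order condition $\nabla G(s^*)\perp H$ gives $\Psi(P^*)-v^*=\mu\mathbf 1$, and summing coordinates, using $\sum_i\Psi(P^*)_i=1=\sum_i v_i^*$, forces $\mu=0$. Then $P:=\operatorname{diag}(e^{s^*_1},\dots,e^{s^*_d})$ is positive definite with $\Psi(P)=v^*$, equivalently $\Phi(P)=rP$; unwinding the homogeneity induction gives $W[n]=r^{N-n}P$ and $r=(d-1)/\sum_i\lambda_i^{-2}$, as claimed.

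Finally, positivity of $P$ is automatic from this construction, but it is also forced on general grounds: $\Psi$ extends continuously to the whole closed simplex, with vanishing coordinates mapped to $0$ (by dominated convergence), so it maps every face into itself and is therefore surjective by a degree/Brouwer argument, and no preimage of the interior point $v^*$ can lie on a face. That degree argument is in fact an alternative to the entire variational step; either way, the homogeneity bookkeeping and the partition-of-unity identity are routine, and the only real content is the global surjectivity of $\Psi$ (equivalently, the coercivity of $G$).
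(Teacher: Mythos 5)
Your proposal is correct, and its skeleton coincides with the paper's: both reduce the geometric decay $W[n]=r^{N-n}P$ to finding a single diagonal $P\succ 0$ with $\Phi(P)=rP$ (the paper phrases this as the ratios $w_{n,i}/w_{n,k}$ being preserved, which is your degree-one homogeneity of $\Phi$), and both pin down $r$ via the trace identity $\sum_i p_iM_{ii}(P)=1$ (the paper's Lemma~\ref{Lemma: Linear Constraint}), which converts the hypothesis \eqref{Eqn: Lemma, Main, Constraint on lambdas} into the statement that $(v_1^*,\dots,v_d^*)$ is an interior point of the probability simplex. Where you genuinely diverge is the crux existence step, i.e.\ solving $p_iM_{ii}(P)=v_i^*$ for all $i$. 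The paper proves this (Lemma~\ref{Lemma: Analysis, Existence of w(N, 1), w(N, 2)}) by a backwards induction over the number of free parameters, repeatedly applying the intermediate value theorem together with delicate continuity and strict-monotonicity bookkeeping for the implicitly defined $\beta_i(\alpha)$ (Lemmas~\ref{AppLemma: Single Parameter Real Analysis Result} and~\ref{Lemma: Continuous Inverse Function}). You instead exhibit the system as the critical-point equation of the convex potential $G(s)=\E[\log\sum_k e^{s_k}\mathbf{b}_k^2]-\langle v^*,s\rangle$ on the hyperplane $\sum_i s_i=0$, and obtain existence from coercivity; the only analytic inputs are $\E[\log \mathbf{b}_j^2]>-\infty$ (true, since $\mathbf{b}_j^2$ is Beta-distributed) and differentiation under the expectation (justified since the gradient integrand is bounded by $1$), both of which you flag. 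Your variational route is shorter, avoids the paper's monotonicity machinery entirely, and as a bonus yields uniqueness of $P$ up to scaling (strict convexity of $G$ on the hyperplane), which the paper does not claim; the paper's inductive route, on the other hand, delivers the explicit monotone dependence of the $p_i$ on the parameters, which it reuses later in the continuity arguments of Case 2. Both are valid proofs of the lemma as stated.
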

Note that this $r$ matches \eqref{Eqn: General Stability Result, Main Results Section} in our Case 1a, when all the $|\lambda_i| > 1$.
The proof of Lemma \ref{Lemma: Main} depends on the technical Lemma~\ref{Lemma: Analysis, Existence of w(N, 1), w(N, 2)}. The proof strategy uses Lemma~\ref{Lemma: Analysis, Existence of w(N, 1), w(N, 2)} to show the existence of an appropriate initial weighting matrix $P$, given that \eqref{Eqn: Lemma, Main, Constraint on lambdas} holds, such that $W[n] = r^{N-n}P$. From this and Lemma \ref{Lemma: Geometric Series with r}, Theorem \ref{Thm: Main, d-dimensional} for Case 1a immediately follows.

\begin{lemma} \label{Lemma: Analysis, Existence of w(N, 1), w(N, 2)}
Let $\textbf{\emph{B}}[n] = (\textbf{\emph{b}}_1[n], \cdots, \textbf{\emph{b}}_d[n])$ be a vector drawn from $\mathcal{S}_d$, the $d$-dimensional hypersphere defined in \eqref{Eqn: d-dimensional unit hypersphere}. Let $v_i$ be positive reals such that $\sum_{i=1}^d v_i = 1$. For all such $v_i$, there exist $p_i > 0$ such that for $1 \le i \le d$:
\begin{align} \label{Eqn: Lemma, Analysis, Existence of w, Eqn. 1}
    \mathbb{E}\left[ \frac{\textbf{\emph{b}}_i[n]^2 \cdot p_i}{\sum_{k=1}^d \textbf{\emph{b}}_k[n]^2 \cdot p_k} \right] = v_i.
\end{align}

\end{lemma}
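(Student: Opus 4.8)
The plan is to set up a continuous map from the space of admissible weights to the space of achievable value-vectors $(v_1,\dots,v_d)$ and show it is surjective onto the open simplex. Concretely, since scaling all the $p_i$ by a common constant does not change the ratio $\tfrac{\textbf{\emph{b}}_i^2 p_i}{\sum_k \textbf{\emph{b}}_k^2 p_k}$, I would normalize $\sum_i p_i = 1$ and define $F : \Delta^{\circ} \to \R^d$ on the open probability simplex $\Delta^{\circ} = \{(p_1,\dots,p_d) : p_i > 0,\ \sum p_i = 1\}$ by
\begin{align} \label{Eqn: proposal, definition of F}
    F_i(p_1,\dots,p_d) := \mathbb{E}\!\left[ \frac{\textbf{\emph{b}}_i[n]^2 \, p_i}{\sum_{k=1}^d \textbf{\emph{b}}_k[n]^2 \, p_k} \right], \qquad 1 \le i \le d.
\end{align}
Note $\sum_i F_i = 1$ identically (the integrand sums to $1$ pointwise), so $F$ maps $\Delta^{\circ}$ into $\Delta^{\circ}$ — each $F_i > 0$ because the integrand is strictly positive on a set of full measure. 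The lemma is exactly the claim that $F$ is \emph{onto} $\Delta^{\circ}$. First I would check $F$ is continuous (dominated convergence: the integrand is bounded by $1$), and indeed extends continuously to the closed simplex in a way that sends the face $\{p_i = 0\}$ into $\{v_i = 0\}$ — if $p_i = 0$ the $i$-th integrand vanishes, and if only $p_i > 0$ then $F_i = 1$.

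The key step is a degree-theoretic / boundary-behavior argument. I would show that $F$ extends continuously to $\overline{\Delta}$ and maps $\partial\Delta$ to $\partial\Delta$, carrying each closed face $\{p_i = 0\}$ into the corresponding face $\{v_i = 0\}$; moreover $F$ restricted to the vertex $e_i$ equals $e_i$. This "boundary-preserving" structure, combined with continuity, forces $F$ to be surjective onto $\Delta^{\circ}$: one can argue by induction on dimension (the boundary map is itself surjective onto $\partial\Delta$ by the inductive hypothesis applied to the lower-dimensional sub-spheres obtained by conditioning on the vanishing coordinates), and then invoke a standard topological fact — a continuous map of a disk to itself that is surjective (indeed the identity) on the boundary must hit every interior point, since otherwise composing with a retraction away from a missed interior point would give a retraction of the disk onto its boundary. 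Equivalently, the topological degree of $F$ on $\Delta$ relative to an interior value is $1$ because it agrees with the identity near $\partial\Delta$.

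The main obstacle I anticipate is making the boundary analysis fully rigorous: $F$ is defined by an integral over $\mathcal{S}_d$, and as $(p_1,\dots,p_d)$ approaches a face, the integrand develops a singularity on the sub-sphere where the surviving $\textbf{\emph{b}}_k$ vanish, so I need a uniform-integrability / dominated-convergence argument to justify that the limit of $F$ along such approaches is governed by the conditional distribution of $\textbf{\emph{B}}$ on the coordinates with $p_k > 0$ (using that these coordinates, after renormalization, are again uniform on the relevant lower-dimensional sphere). An alternative that sidesteps topology entirely: define $\Phi(p) := \big(\mathbb{E}[\textbf{\emph{b}}_1^2 p_1 / \langle \textbf{\emph{b}}^2, p\rangle],\dots\big)$ and observe one wants a fixed point of the map $p \mapsto$ (suitably reweighted $p$ driving $F(p)$ toward the target $v$); concretely, show the map $T_v(p)_i := p_i \cdot v_i / F_i(p)$ — a multiplicative-weights update — has a fixed point in $\Delta^{\circ}$ by Brouwer (after checking $T_v$ maps a compact convex subset into itself and is continuous), and at a fixed point $F_i(p) = v_i$ for all $i$. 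I would pursue this Brouwer-fixed-point route as the cleaner argument, with the degree argument as backup; either way the crux is controlling behavior near $\partial\Delta$ so the fixed point stays interior.
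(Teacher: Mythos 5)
Your argument is correct in outline but follows a genuinely different route from the paper. The paper's proof is entirely elementary and constructive in spirit: it peels off one coordinate at a time by backward induction, using a one-parameter intermediate-value-theorem statement (Lemma~\ref{AppLemma: Single Parameter Real Analysis Result}, which shows the relevant expectation is continuous and strictly monotone in a single weight) together with a continuity-and-monotonicity-of-the-implicit-solution result (Lemma~\ref{Lemma: Continuous Inverse Function}) to guarantee that the previously chosen weights vary continuously as the remaining free parameter is swept, so that IVT can be applied at each stage. Your approach instead treats the map $F:\overline{\Delta}\to\overline{\Delta}$ globally and deduces surjectivity onto the open simplex from its boundary behavior via degree theory (or, equivalently, a KKM/Sperner-type argument); the key structural facts you identify --- $\sum_i F_i\equiv 1$, $F_i>0$ in the interior, $F$ carries each face $\{p_i=0\}$ into $\{v_i=0\}$, and continuity up to the boundary via dominated convergence with the trivial bound of $1$ on the integrand --- are all valid, and the boundary limit is indeed governed by the uniform distribution on the lower-dimensional subsphere, which is exactly the paper's projection result (Lemma~\ref{Lemma: Expanding or Projecting Dimension}(2)). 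One imprecision to fix: $F$ does \emph{not} ``agree with the identity near $\partial\Delta$''; what is true is that $F$ is face-preserving, and since each face is convex the straight-line homotopy $t\mapsto tF(x)+(1-t)x$ stays inside $\partial\Delta$ for $x\in\partial\Delta$, so $F|_{\partial\Delta}$ is homotopic to the identity within $\partial\Delta$ and hence has degree $1$ with respect to any interior target --- that is the statement your degree computation actually needs (and it also makes your induction on faces unnecessary). Your Brouwer/multiplicative-weights alternative is shakier: $T_v(p)_i=p_iv_i/F_i(p)$ is of the form $0\cdot\infty$ as $p_i\to 0$, so exhibiting a compact convex set in $\Delta^{\circ}$ that $T_v$ maps into itself is precisely the hard estimate, and you have not supplied it; I would commit to the degree argument. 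The trade-off is the usual one: the paper's induction is longer but self-contained and additionally yields monotonicity/continuity of the $p_i$ in auxiliary parameters (which the paper reuses), whereas your topological argument is shorter and more conceptual but imports degree theory and proves only bare existence.
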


\begin{proof} (Sketch)
The proof inductively establishes that given the first $\{p_1, \cdots, p_i\}$, we can select the subsequent $\{ p_{i+1}, \cdots p_d \}$ such that the latter $d - i$ equalities hold.
\end{proof}

\vspace{-1mm}
\subsection{Case 1b}
Next, we will consider Case 1b, when the conditions in \eqref{Eqn: Lemma, Main, Constraint on lambdas} are not satisfied. For this we use
Lemma \ref{Lemma: Embedded System} which shows that if $\mathcal{S}$ contains a lower-dimensional system that is unstable, $\mathcal{S}$ must also be unstable.

\begin{lemma} \label{Lemma: Embedded System}
Let $A' = \text{diag}(\lambda_2, \cdots, \lambda_d)$. Define the $(d-1)$-dimensional system $\overline{\mathcal{S}}$ to be similar to \eqref{Eqn: System Dynamics} but with state matrix $A'$. If $\overline{\mathcal{S}}$ is not second-moment stabilizable, then $\mathcal{S}$ is also not second-moment stabilizable. 

\end{lemma}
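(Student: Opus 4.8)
The plan is to embed the dynamics of $\overline{\mathcal{S}}$ into those of $\mathcal{S}$ so that any stabilizing controller for $\mathcal{S}$ yields a stabilizing controller for $\overline{\mathcal{S}}$, proving the contrapositive: if $\mathcal{S}$ is second-moment stabilizable, then so is $\overline{\mathcal{S}}$. The key observation is that the first coordinate of $\mathcal{S}$ evolves as $\textbf{X}_1[n+1] = \lambda_1 \textbf{X}_1[n] + \textbf{b}_1[n] u[n]$, and since $\textbf{b}_1[n]$ is nonzero almost surely, the controller of $\mathcal{S}$ has (almost surely) the freedom to set $\textbf{X}_1[n+1]$ to \emph{any} value it wants by an appropriate choice of $u[n]$ — but this couples the other $d-1$ coordinates, which then receive the "leftover" actuation $\textbf{b}_i[n] u[n]$ in a fixed direction determined by $\textbf{B}[n]$. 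So I would run $\mathcal{S}$ with $\textbf{X}_1[0] = 0$ and, if the controller ever needs actuation in the bottom $d-1$ block, it can only push along the random direction $(\textbf{b}_2[n], \dots, \textbf{b}_d[n])$ normalized — which is exactly a uniform direction on $\mathcal{S}_{d-1}$ (by the well-known fact that the projection of a uniform point on $\mathcal{S}_d$ onto a coordinate subspace, once renormalized, is uniform on the lower-dimensional sphere, independent of its radial part). This suggests the cleanest route: given a controller $g$ for $\mathcal{S}$, simulate $\mathcal{S}$ internally, feed the simulated $\textbf{B}[n]$'s to $g$, and read off $u[n]$; the bottom $d-1$ coordinates of the simulated $\mathcal{S}$ trajectory then form a valid trajectory of $\overline{\mathcal{S}}$ under an induced (randomized, history-dependent) controller, provided we account for the fact that the actuation magnitude seen by $\overline{\mathcal{S}}$ is $u[n]$ times the (random) norm of $(\textbf{b}_2[n],\dots,\textbf{b}_d[n])$, while $\overline{\mathcal S}$'s own actuation vector is the renormalized version.

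Concretely, the steps I would carry out are: (1) Fix a controller $U_0^\infty$ that second-moment stabilizes $\mathcal{S}$, i.e.\ $\limsup_n \E[\textbf{X}[n]^T\textbf{X}[n]] < \infty$, with initial state chosen to be $\textbf{X}[0] = (0, \textbf{X}_{\overline{\mathcal S}}[0])$ so that the bottom block matches the given initial state of $\overline{\mathcal S}$. (2) Observe that since $\textbf{X}[n]^T\textbf{X}[n] = \textbf{X}_1[n]^2 + \sum_{i\ge 2}\textbf{X}_i[n]^2 \ge \sum_{i\ge 2}\textbf{X}_i[n]^2$, the bottom $d-1$ coordinates are automatically second-moment bounded along the $\mathcal{S}$-trajectory. (3) Show that the stochastic process $(\textbf{X}_2[n],\dots,\textbf{X}_d[n])$ is a legitimate trajectory of $\overline{\mathcal S}$: its dynamics are $\textbf{X}_i[n+1] = \lambda_i \textbf{X}_i[n] + \textbf{b}_i[n] u[n]$ for $i \ge 2$, which we rewrite as $\lambda_i \textbf{X}_i[n] + \tilde{\textbf{b}}_i[n]\, \tilde u[n]$ where $\tilde{\textbf{B}}[n]$ is $(\textbf{b}_2[n],\dots,\textbf{b}_d[n])$ renormalized to unit length and $\tilde u[n] := u[n]\cdot\|(\textbf{b}_2[n],\dots,\textbf{b}_d[n])\|_2$. (4) Verify the two distributional/measurability facts: $\tilde{\textbf{B}}[n]$ is i.i.d.\ uniform on $\mathcal{S}_{d-1}$, and the induced control $\tilde u[n]$ is a permissible strategy for $\overline{\mathcal S}$ — it may be randomized (it depends on the discarded radial coordinate and on $\textbf{b}_1[n]$, $\textbf{X}_1[n]$), but one can either enlarge $\overline{\mathcal S}$'s control strategy class to allow independent auxiliary randomization, or note that second-moment stabilizability under randomized strategies implies it under deterministic ones by conditioning on the auxiliary randomness and picking a good realization. (5) Conclude $\limsup_n \E[\,\|(\textbf{X}_2[n],\dots,\textbf{X}_d[n])\|_2^2\,] < \infty$, i.e.\ $\overline{\mathcal S}$ is second-moment stabilizable, which is the contrapositive of the claim.

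The main obstacle is step (4) — making precise that the "induced controller" for $\overline{\mathcal{S}}$ lies in (or is reducible to) the permissible strategy class $\mathcal{G}_n$, since it genuinely uses extra randomness (the norm of the projected vector, and the first coordinate's history) that $\overline{\mathcal S}$ does not natively observe. The resolution is the standard averaging argument: a randomized controller achieving bounded second moment can be "derandomized" by fixing the auxiliary randomness to a value attaining at most the average (here one must be slightly careful that $\limsup$ of an expectation over auxiliary randomness being finite yields a fixed realization with finite $\limsup$, which follows from e.g.\ Fatou's lemma applied along a subsequence, or by working with a uniform-in-$n$ bound). A secondary technical point worth stating cleanly is the rotational-symmetry fact in step (4) that $(\textbf{b}_2,\dots,\textbf{b}_d)/\|(\textbf{b}_2,\dots,\textbf{b}_d)\|_2$ is uniform on $\mathcal{S}_{d-1}$ and that the sequence is i.i.d.; this is immediate from the Gaussian construction of $\textbf{B}[n]$ mentioned in Section~\ref{sec: Problem_formulation}, since dropping coordinates of an i.i.d.\ Gaussian vector and renormalizing gives a uniform point on the lower sphere, independent across time. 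Everything else is bookkeeping.
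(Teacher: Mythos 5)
Your proposal is correct and follows essentially the same route as the paper's proof: project the $d$-dimensional trajectory onto the retained coordinates, renormalize the projected actuation direction (which is uniform on the lower-dimensional sphere by the paper's projection lemma), rescale the control by the norm of the projection, and conclude that a stabilizing controller for $\mathcal{S}$ induces one for $\overline{\mathcal{S}}$. Your step (4) on the admissibility/derandomization of the induced (randomized, history-dependent) controller is a point the paper's proof leaves implicit, and your treatment of it is a welcome extra precaution rather than a divergence in approach.
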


Now, we will prove Theorem \ref{Thm: Main, d-dimensional} in Case 1b by induction on the dimension $d$. Assume that it holds for all systems of dimension $d - 1$, and consider a $d$-dimensional system. If the $d$-dimensional system falls into Case 1a, we can use the known value of $r$ from Lemma \ref{Lemma: Main}, along with Lemma \ref{Lemma: Geometric Series with r}, to establish conditions on the system's stabilizability. Since \eqref{Eqn: Lemma, Main, Constraint on lambdas} is satisfied when $d = 1$, this also establishes the base case for the induction.

Otherwise, we know that at least one $v_{i}^* \le 0$ --- WLOG, let one such term be $v_{1}^*$. 
From \eqref{Eqn: Lemma, Main, Constraint on lambdas}, we have that $\sum_{i=1}^d v_{i}^* = 1$. Therefore,
\[
    1 \le \sum_{i=2}^d v_{i}^* = \sum_{i=2}^d \left(1 - \frac{(d-1)\lambda_i^{-2}}{\sum_{i=1}^d \lambda_i^{-2}}\right) = (d-1) - r \sum_{i=2}^d \lambda_i^{-2}.
\]
Rearranging, we find that
\begin{align} \label{Eqn: Inductive Inequality Case 2}
    \frac{d-2}{\sum_{i=2}^d \lambda_i^{-2}} \ge r.
\end{align}
Furthermore, since
\[
   0 \ge  v_{1}^* = 1 - \frac{(d-1)\lambda_1^{-2}}{\sum_{i=1}^d \lambda_i^{-2}} = 1 - r\lambda_1^{-2},
\]
we have that $r \ge \lambda_1^2 > 1$, since we assumed that \mbox{$|\lambda_1| > 1$}. From \eqref{Eqn: Inductive Inequality Case 2}, we see that $(d-2)/\sum_{i=2}^d \lambda_i^{-2} > 1$, so we may invoke our inductive hypothesis on the $(d-1)$-dimensional system $\overline{\mathcal{S}}$ with state matrix $A' = \text{diag}(\lambda_2, \cdots, \lambda_d)$ to see that $\overline{\mathcal{S}}$ is not second-moment stabilizable. Then, by Lemma \ref{Lemma: Embedded System}, the system $\mathcal{S}$ is not stabilizable as well. Since $r > 1$, this is the desired conclusion, since we have shown %
that $r$ is not strictly less than $1$, and our system is not stabilizable. %

\subsection{Case 2}
Now, we can consider Case 2, when some of our eigenvalues may be less than $1$ in magnitude.
\begin{proof} (Sketch)
Let $\mathcal{\overline{S}}$ be an $m$-dimensional system embedded in $\mathcal{S}$ whose state matrix includes just those eigenvalues of $A$ greater than $1$ in magnitude. Note that $r$ is the same for $\mathcal{\overline{S}}$ and $\mathcal{S}$. If $\mathcal{S}$ is stabilizable, then by Lemma \ref{Lemma: Embedded System}, so is $\overline{\mathcal{S}}$. Then by following the argument in Case 1, we have $r \le 1$.

Now assume $r<1$ to show the other direciton of the proof. This implies that the subsystem $\mathcal{\overline{S}}$ is stabilizable. We will use a scaled version of the scalar controls that stabilize  $\mathcal{\overline{S}}$ to stabilize $\mathcal{S}$. Here effectively we are ``ignoring'' the additional $(d-m)$ dimensions orthogonal to the embedded system that has eigenvalues $< 1$ in magnitude. Any components of $\textbf{B}[n]$ (and $\textbf{X}[n]$) in those $(d-m)$ directions (with eigenvalues $< 1$) should naturally decay, as long as the components in those directions do not accumulate too fast.

To ensure this, at each time step, if $\mathbf{B}[n]$ is ``almost'' orthogonal to $\mathcal{\overline{S}}$ (meaning that $u[n]$ would be large and $\textbf{B}[n] u[n]$ will be large in the other $(d-m)$ dimensions), we will instead apply a zero control input. 

By slightly modifying the recursion in \eqref{Eqn: Thm, Riccati Recursion for W, at time N} and \eqref{Eqn: Thm, Riccati Recursion for W, Iterative Recursion} to handle the case of dropped controls, and correspondingly modifying and re-proving Proposition \ref{Prop: Riccati Recursion, Properties}, Lemma \ref{Lemma: Riccati Recursion} and Lemma \ref{Lemma: Main}, we can show that if $r < 1$, $\mathcal{\overline{S}}$ can be stabilized even if controls are dropped with some arbitrarily small, but nonzero probability. The resultant bounded input in the orthogonal $d-m$ dimensions means that $\mathbf{X}[n]$ is bounded in the limit in every dimension, so $\mathcal{S}$ is also stabilizable if $r < 1$, which proves the result.\end{proof}

\section{Conclusions and Future Work}
\label{sec: conclusion_future_work}
This work takes a first step to understand how the inability to plan due to parameter uncertainty affects vector systems that must trade-off between growth in different dimensions. We hope to extend this to understand non-symmetric gain matrices as well side-information and lookahead in systems to build our understanding of the value-of-information in control settings.

\newpage
\enlargethispage{-7.5cm}

\bibliographystyle{IEEEtran}
\bibliography{references}

\clearpage
\section{Appendices}
\label{sec: Appendices}

\subsection{Proof of Proposition \ref{Prop: Change of Coordinates, 2nd-Moment Stabilizability}} \label{AppSubsec: Prop: Change of Coordinates, 2nd-Moment Stabilizability}
Fix $V \in \R^d$ and $P \in \R^d$ arbitrarily. Let $P = QDQ^T$ be the orthogonal diagonalization of $P$, and define $P^{1/2} := QD^{1/2}Q^T$, where $D^{1/2}$ denotes the diagonal matrix whose elements are the square roots of the elements of $D$. For each $n \in \N$, by the Cauchy-Schwarz inequality:
\begin{align} \label{Eqn: Prop, Change of Coordinates, 1}
    \mathbb{E}\big[\textbf{Z}[n]^T P \textbf{Z}[n] \big] &\leq \mathbb{E}\big[ \Vert P \Vert_2 \Vert \textbf{Z}[n] \Vert_2^2 \big] \\ \nonumber
    &= \Vert P \Vert_2 \cdot \mathbb{E}[\textbf{Z}[n]^T \textbf{Z}[n]]
\end{align}
and:
\begin{align} \nonumber
    \mathbb{E}\big[\textbf{Z}[n]^T \textbf{Z}[n] \big] &= \mathbb{E}\big[ (P^{1/2}\textbf{Z}[n])^T P^{-1} (P^{1/2}\textbf{Z}[n]) \big] \\ \nonumber
    &\leq \mathbb{E}\big[\Vert P^{-1} \Vert_2 \cdot \Vert P^{1/2} \textbf{Z}[n] \Vert_2^2 \big] \\ \nonumber
    &= \Vert P^{-1} \Vert_2 \cdot \mathbb{E}\big[ \Vert P^{1/2}\textbf{Z}[n] \Vert_2^2 \big]  \\ \label{Eqn: Prop, Change of Coordinates, 2}
    &= \Vert P^{-1} \Vert_2 \cdot \mathbb{E}[\textbf{Z}[n]^T P \textbf{Z}[n]].
\end{align}
Thus, since $\Vert P \Vert_2 < \infty$ and $\Vert P^{-1} \Vert_2 < \infty$, \eqref{Eqn: Prop, Change of Coordinates, 1} and \eqref{Eqn: Prop, Change of Coordinates, 2} imply that $\mathbb{E}\big[\textbf{Z}[n]^T \textbf{Z}[n] \big] < \infty$ if and only if $\mathbb{E}\big[\textbf{Z}[n]^T P \textbf{Z}[n] \big] < \infty$, and $\mathbb{E}\big[\textbf{Z}[n]^T \textbf{Z}[n] \big] = 0$ if and only if $\mathbb{E}\big[\textbf{Z}[n]^T P \textbf{Z}[n] \big] < \infty$. Similarly, $\limsup_{n \ra \infty} \mathbb{E}\big[\textbf{Z}[n]^T P \textbf{Z}[n] \big] = 0$ if and only if $\limsup_{n \ra \infty} \mathbb{E}\big[\textbf{Z}[n]^T P \textbf{Z}[n] \big] = 0$.

\subsection{Proof of Proposition \ref{Prop: Riccati Recursion, Properties}} \label{AppSubsec: Prop: Riccati Recursion, Properties}

First, we show via backward induction that each $W[n]$ is diagonal. By assumption $W[N] = P$ and $P$ is diagonal, so the assertion holds at time $N$. Suppose $W[n+1]$ is diagonal and symmetric positive definite for some $n = 0, 1, \cdots, N-1$. By \eqref{Eqn: Thm, Riccati Recursion for W, Iterative Recursion}, and the fact that $A$ is diagonal, it suffices to show that $\mathbb{E}[\textbf{M}[n]]$ is diagonal. Let $\textbf{M}_{ij}[n]$ denote the $(i, j)$-th element of $\textbf{M}[n]$ for each $i, j \in \{1, \cdots, d\}$, let $\textbf{b}_i[n]$ denote the $i$-th element of $\textbf{B}[n]$  for each $i \in \{1, \cdots, d\}$, and let $W_{ij}[n+1]$ denote the $(i, j)$-th element of $W[n+1]$ for each $i, j \in \{1, \cdots, d\}$. By hypothesis, $W[n+1]$ is diagonal, so \eqref{Eqn: Thm, Riccati Recursion for W, M[n] Definition} becomes:
\begin{align} \label{Eqn: M[i, j] Definition}
    \textbf{M}_{ij}[n] := \frac{W_{ij}[n+1]\textbf{b}_i[n] \textbf{b}_j[n]}{\sum_{k=1}^d \textbf{b}_k[n]^2 W_{kk}[n+1]}.
\end{align}
We aim to show that $\textbf{M}_{ij}[n] = 0$ whenever $i \ne j$. To establish this, fix $i, j \in \{1, \cdots, d\}, i \ne j$ arbitrarily, and
let $\textbf{b}_{-i}[n]$ denote $(\textbf{b}_1, \cdots, \textbf{b}_{i-1}, \textbf{b}_{i+1}, \cdots, \textbf{b}_d)$. Then:
\begin{align} \nonumber
    \mathbb{E}\big[ \textbf{M}_{ij}[n] \big] &= \mathbb{E}\Bigg[ \frac{W_{ij}[n+1]\textbf{b}_i[n] \textbf{b}_j[n]}{\sum_{k=1}^d \textbf{b}_k[n]^2 W_{kk}[n+1]} \Bigg] \\ \nonumber
    &= \mathbb{E}\Bigg[ \mathbb{E}\Bigg[ \frac{W_{ij}[n+1]\textbf{b}_i[n] \textbf{b}_j[n]}{\sum_{k=1}^d \textbf{b}_k[n]^2 W_{kk}[n+1]} \Bigg|\textbf{b}_{-i}[n] \Bigg] \Bigg] \\
    \label{Eqn: E[M], Equals 0}
    &= 0,
\end{align}
where 
\eqref{Eqn: E[M], Equals 0} follows because, conditioned on any fixed tuple $\textbf{b}_{-i}[n]$, the distribution of $\textbf{b}_i[n]$ is symmetric around $\textbf{b}_i[n] = 0$, while:
\begin{align}
    \frac{W_{ij}[n+1]\textbf{b}_i[n] \textbf{b}_j[n]}{\sum_{k=1}^d \textbf{b}_k[n]^2 W_{kk}[n+1]}
\end{align}
is an odd function of $\textbf{b}_i[n]$.

Next, we show via backward induction that each $W[n]$ is symmetric positive definite. First:
\begin{align*} 
    &W[n] \\
    = \hspace{0.5mm} &A^T \big( W[n+1] - \mathbb{E}[\textbf{M}[n]] W[n+1] \big) A \\
    = \hspace{0.5mm} &A^T W[n+1]^{1/2} \mathbb{E}\left[ I - \frac{W[n+1]^{1/2} \textbf{B}[n] \textbf{B}[n]^T W[n+1]^{1/2}}{\textbf{B}[n]^T W[n+1] \textbf{B}[n]} \right] \\ 
    &\hspace{5mm} W[n+1]^{1/2} A \\
    = \hspace{0.5mm} &A^T W[n+1]^{1/2} \mathbb{E}\left[ I - \overline{\textbf{B}}[n] \overline{\textbf{B}}[n]^T \right]  W[n+1]^{1/2} A,
\end{align*}
where we have defined the unit vector $\overline{\textbf{B}} \in \mathcal{S}_d$ by:
\begin{align}
    \overline{\textbf{B}}[n] := \frac{W[n+1]^{1/2} \textbf{B}[n]}{\Vert W[n+1]^{1/2} \textbf{B}[n] \Vert_2},
\end{align}
By assumption, $A$ is non-singular and $W[n+1]$ is diagonal and symmetric positive definite, it suffices to show that \mbox{$\mathbb{E}\big[I - \overline{\textbf{B}}[n]  \overline{\textbf{B}}[n]^T \big]$} is symmetric positive definite. To this end, observe that for any two unit vectors $\textbf{v}, \overline{\textbf{B}}[n] \in \mathcal{S}_d$:
\begin{align}
    &\textbf{v}^T (I - \overline{\textbf{B}}[n] \overline{\textbf{B}}[n]^T) \textbf{v} = 1 - (\overline{\textbf{B}}[n]^T \textbf{v})^2 \\
    \geq \hspace{0.5mm} &1 - \big(\Vert \overline{\textbf{B}}[n] \Vert^2 \cdot \Vert \textbf{v} \Vert^2 \big) = 0,
\end{align}
with equality if and only if $\overline{\textbf{B}}[n] = \pm \textbf{v}$, i.e. if and only if $\textbf{B}[n]$ and the nonzero vector $W[n+1]^{-1/2} v$ were parallel. Since this occurs with probability zero when $\textbf{B}[n]$ is drawn uniformly from $\mathcal{S}_d$, we conclude that:
\begin{align}
    \textbf{v}^T \mathbb{E}\big[I - \overline{\textbf{B}}[n] \overline{\textbf{B}}[n]^T \big] \textbf{v} &= \mathbb{E}\big[\textbf{v}^T \big( I - \overline{\textbf{B}}[n] \overline{\textbf{B}}[n]^T \big) \textbf{v} \big] > 0,
\end{align}
so $\mathbb{E}\big[I - \overline{\textbf{B}}[n]  \overline{\textbf{B}}[n]^T \big]$ is indeed symmetric positive definite. This concludes the proof.

\subsection{Proof of Lemma \ref{Lemma: Riccati Recursion}} \label{AppSubsec: Thm: Riccati Recursion}

The equation \eqref{Eqn: Thm, Riccati Recursion for W, at time N} establishes \eqref{Eqn: Thm, Riccati Recursion for W, Main Statement} at time $N$. Suppose \eqref{Eqn: Thm, Riccati Recursion for W, Main Statement} holds at times $n+1, \cdots, N$ for some \mbox{$n = 0, 1, \cdots, N-1$}. Then, by invoking \eqref{Eqn: Thm, Riccati Recursion for W, Iterative Recursion} at time $n+1$, we have:
\begin{align} \nonumber
    &\mathbb{E}\left[ \textbf{X}[n+1]^T W[n+1] \textbf{X}[n+1] \right] \\ \label{Eqn: Proof, Riccati Recursion, Plug in dynamics}
    = \hspace{0.5mm} &\mathbb{E}\big[ (\textbf{X}[n]^T A^T + \textbf{B}[n]^T u[n] ) W[n+1] (A \textbf{X}[n] + \textbf{B}[n] u[n] ) \big] \\ \label{Eqn: Proof, Riccati Recursion, Quadratic Cost}
    = \hspace{0.5mm} & \mathbb{E}\big[\textbf{B}[n]^T W[n+1] \textbf{B}[n] \cdot u[n]^2 + 2 \textbf{B}[n]^T W[n+1] A \textbf{X}[n] \cdot u[n] \\ \nonumber
    &\hspace{5mm} + \textbf{X}[n]^T A^T W[n+1] A \textbf{X}[n] \big], 
\end{align}
where \eqref{Eqn: Proof, Riccati Recursion, Plug in dynamics} results from substituting in the system dynamics \eqref{Eqn: System Dynamics}. Since $W[n+1]$ is symmetric positive definite, and $\Vert \textbf{B}[n] \Vert_2 = 1$, \eqref{Eqn: Proof, Riccati Recursion, Quadratic Cost} is strictly convex and quadratic in $u[n]$. Thus, by setting the derivative of \eqref{Eqn: Proof, Riccati Recursion, Quadratic Cost} with respect to $u[n]$ equal to 0, we find that its unique minimizer is given by:
\begin{align} \label{Eqn: Proof, Riccati Theorem, Minimizing u[n]}
    u^\star[n] := - \frac{\textbf{B}[n]^T W[n+1] A \textbf{X}[n]}{\textbf{B}[n]^T W[n+1] \textbf{B}[n]}
\end{align}
The corresponding minimum value of \eqref{Eqn: Proof, Riccati Recursion, Quadratic Cost} can now be obtained by substituting \mbox{$u[n] = u^\star[n]$} and the definition of $\textbf{M}[n]$ from \eqref{Eqn: Thm, Riccati Recursion for W, M[n] Definition}, and applying the law of iterated expectations:
\begin{align} 
    &\mathbb{E}\big[\textbf{X}[n]^T A^T W[n+1] \big( I - \textbf{M}[n] \big) A \textbf{X}[n] \big] \\ \nonumber
    = \hspace{0.5mm} &\mathbb{E}\big[\mathbb{E}\big[\textbf{X}[n]^T A^T W[n+1] \big( I - \textbf{M}[n] \big) A \textbf{X}[n] \mid \textbf{B}_0^{n-1} \big] \big] \\ \nonumber
    = \hspace{0.5mm} &\mathbb{E}\big[ \textbf{X}[n]^T A^T W[n+1] \big( I - \mathbb{E}\big[ \textbf{M}[n] \mid \textbf{B}_0^{n-1} \big] \big) A \textbf{X}[n]  \big] \\ \nonumber
    = \hspace{0.5mm} &\mathbb{E}\big[ \textbf{X}[n]^T A^T W[n+1] \big( I - \mathbb{E}\big[ \textbf{M}[n] \big] \big) A \textbf{X}[n] \big] \\ \label{Eqn: Proof, Riccati Theorem, Moving the Expectation, Last Expression}
    = \hspace{0.5mm} &\mathbb{E}\big[ \textbf{X}[n]^T W[n] \textbf{X}[n] \big].
\end{align}
We conclude that:
\begin{equation}
    \mathbb{E}\big[ \textbf{X}[n+1]^T W[n+1] \textbf{X}[n+1] \big] \geq \mathbb{E}\big[ \textbf{X}[n]^T W[n] \textbf{X}[n] \big]
\end{equation}
with equality if and only if the optimal control:
\begin{align}
    u[n] = u^\star[n] := - \frac{\textbf{B}[n]^T W[n+1] A \textbf{X}[n]}{\textbf{B}[n]^T W[n+1] \textbf{B}[n]}
\end{align}
is applied. This completes the recursion step and, as a result, establishes \eqref{Eqn: Thm, Riccati Recursion for W, Main Statement}.

\subsection{Proof of Lemma \ref{Lemma: Analysis, Existence of w(N, 1), w(N, 2)}:} \label{AppSubsec: Lemma: Analysis, Existence of w(N, 1), w(N, 2)}

Our proof of Lemma \ref{Lemma: Analysis, Existence of w(N, 1), w(N, 2)} relies on the following two helper lemmas:
\begin{itemize}
    \item \textbf{Lemma \ref{AppLemma: Single Parameter Real Analysis Result}}: A lemma that shows a weaker version of the desired result, with one degree of freedom used to achieve a single target equality.
    \item \textbf{Lemma \ref{Lemma: Continuous Inverse Function}}: A utility result that helps us generalize results of the form of Lemma \ref{AppLemma: Single Parameter Real Analysis Result} to greater degrees of freedom. %
\end{itemize}

Then, we will prove Lemma \ref{Lemma: Analysis, Existence of w(N, 1), w(N, 2)} itself by induction over the number of free variables ($p_i$'s) and the number of target equalities, repeatedly applying Lemma \ref{Lemma: Continuous Inverse Function} to increase the degrees of freedom from $1$ up to $d$.

\begin{lemma} \label{AppLemma: Single Parameter Real Analysis Result}
Let $\mathbf{B}$ be a vector drawn uniformly from the $d$-dimensional unit sphere. Let $\alpha_2, \cdots, \alpha_n$ be arbitrary positive reals. Define $f: \R \times \R \rightarrow \R$ by:
\begin{align} \label{AppEqn: F(p) Definition}
    f(p) := \mathbb{E}\left[ \frac{\textbf{b}_1^2}{\textbf{b}_1^2 + \sum_{k=2}^d \alpha_k \textbf{b}_k^2 \cdot p} \right].
\end{align}
We claim that, for each $v^\star \in \left(0, 1 \right)$,
there exists some $p > 0$ such that:
\begin{align}
    f(p) = v^\star.
\end{align}
Furthermore, we claim that $f(p)$ is continuous and strictly monotonically decreasing in both $p$ and each of the $\alpha_k$.
\end{lemma}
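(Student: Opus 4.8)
\textbf{Proof proposal for Lemma~\ref{AppLemma: Single Parameter Real Analysis Result}.}

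The plan is to study the scalar function $f(p)$ directly as an expectation of a parametrized integrand and extract its qualitative behavior (continuity, strict monotonicity, boundary limits), then conclude existence of the desired root by the intermediate value theorem. First, I would verify that $f$ is well-defined and finite: for any fixed $p > 0$ and fixed positive $\alpha_k$, the integrand $\textbf{b}_1^2 / (\textbf{b}_1^2 + \sum_{k \ge 2} \alpha_k \textbf{b}_k^2 \cdot p)$ lies in $[0,1]$ pointwise (the numerator is one of the summands in the denominator, up to the factor $p$ which could be $<1$ — so strictly speaking I should bound it by $\max(1, 1/p)$ times something, or more cleanly note it is at most $1$ when $p\ge 1$ and handle $p<1$ by the crude bound $\textbf{b}_1^2/(\textbf{b}_1^2 p') \le 1/p'$ — in any case it is bounded on compact $p$-intervals away from $0$), so the expectation exists and is finite, in fact in $(0,1)$ for every $p > 0$.

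Next, continuity and strict monotonicity in $p$ (and in each $\alpha_k$): for each fixed realization of $\textbf{B}$ with $\textbf{b}_1 \ne 0$ and at least one $\textbf{b}_k \ne 0$ ($k\ge2$) — an event of probability one — the integrand is a continuous, strictly decreasing function of $p$ on $(0,\infty)$, and likewise strictly decreasing in each $\alpha_k$; it is also dominated by an integrable envelope on any interval $p \in [\delta, \infty)$. So I would invoke dominated convergence (or the dominated-convergence form of differentiation under the integral) to pass continuity and strict monotonicity through the expectation. Strictness survives the expectation because the pointwise strict inequality holds on a positive-probability set. This establishes the second claim of the lemma.

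For the existence of a root, I would compute the two boundary limits. As $p \to 0^+$, the integrand increases monotonically to $\textbf{b}_1^2/\textbf{b}_1^2 = 1$ pointwise a.s., so by monotone convergence $f(p) \to 1$. As $p \to \infty$, the integrand decreases monotonically to $0$ pointwise a.s. (since the denominator blows up while the numerator stays fixed, on the probability-one event that some $\textbf{b}_k \ne 0$, $k \ge 2$), so $f(p) \to 0$. Since $f$ is continuous and strictly decreasing with $\lim_{p\to0^+} f = 1$ and $\lim_{p\to\infty} f = 0$, for every target $v^\star \in (0,1)$ there is a unique $p > 0$ with $f(p) = v^\star$, by the intermediate value theorem.

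The main obstacle is purely technical rather than conceptual: setting up a clean integrable dominating function so that dominated/monotone convergence applies uniformly enough to justify continuity, the limits, and (if one wants differentiability) the exchange of derivative and expectation. The subtlety is that for $p$ near $0$ the crude bound $1/p$ is not integrable-uniform, so one must either restrict attention to $p$ bounded away from $0$ when proving continuity on $(0,\infty)$ (which is harmless, since continuity is a local property) or use the uniform bound $1$ valid for $p \ge 1$ together with monotone convergence for the $p \to 0^+$ limit. None of the steps should require an explicit evaluation of the integral over $\mathcal{S}_d$; everything follows from pointwise monotonicity of the integrand plus standard convergence theorems.
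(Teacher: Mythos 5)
Your proposal is correct and follows essentially the same route as the paper's argument: pointwise monotonicity and continuity of the integrand combined with dominated/monotone convergence, strictness preserved because the strict pointwise inequality holds on a probability-one event, and the intermediate value theorem for existence. Two minor notes: since $\textbf{b}_1^2$ appears unscaled in the denominator, the integrand is bounded by $1$ for \emph{all} $p \ge 0$, so your worry about $p<1$ is unnecessary; and your boundary limits ($f\to 1$ as $p\to 0^+$, $f\to 0$ as $p\to\infty$) are the ones consistent with the lemma's claimed monotonicity, whereas the paper's sketch states them in the opposite orientation.
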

\begin{proof} (Sketch)
We have that $f(0) = 0$. As $p$ tends towards infinity, we claim that $f(p)$ approaches $1$. This is true because, for almost all realizations of $\mathbf{B}$, the quantity inside the expectation approaches $1$ as $p \to \infty$. Since the quantity inside the expectation is also always bounded, the dominated convergence theorem implies that $f(p)$ approaches $1$. Furthermore, since $f(p)$ is the expectation of a continuous function of $\mathbf{B}$, the $\{\alpha_i\}$,  and $p$, with $\mathbf{B}$ drawn from a continuous distribution, $f(p)$ is itself continuous in the $\{ \alpha_i \}$ and in $p$. Thus, by the intermediate value theorem, there exists a $p$ such that $f(p) = v^*$ for any $v^* \in (0, 1)$. Furthermore, since the quantity inside the expectation decreases monotonically with the $\{ \alpha_i \}$ and $p$ for all realizations of $\mathbf{B}$, we see that the expectation itself decreases monotonically with the $\{ \alpha_i \}$ and with $p$. A slightly more careful analysis, noting that we can lower-bound the magnitude of the derivative of the quantity inside the expectation with constant probability, shows that $f(p)$ in fact decreases strictly monotonically with $p$ and the $\{ \alpha_i \}$, establishing the final claim in our lemma.

\end{proof}

We will next establish another helper lemma.

\begin{lemma} \label{Lemma: Continuous Inverse Function}
Consider a continuous, strictly monotonically increasing function of two arguments $f: (\mathbb{R}^+, \mathbb{R}^+) \to \mathbb{R}^+$. For any $v \in (0, 1)$ and $a \in \mathbb{R}^+$, let there exist a function $g_v(a)$ such that $f(a, g_v(a)) = v$. Then, $g_v(a)$ is a continuous function decreasing strictly monotonically with $a$.
\end{lemma}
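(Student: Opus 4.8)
\textbf{Proof proposal for Lemma~\ref{Lemma: Continuous Inverse Function}.}

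The plan is to extract $g_v$ from the defining relation $f(a, g_v(a)) = v$ using only the monotonicity and continuity of $f$, without any differentiability assumptions. First I would establish \emph{monotonicity}: fix $a_1 < a_2$ in $\mathbb{R}^+$ and suppose for contradiction that $g_v(a_1) \le g_v(a_2)$. Since $f$ is strictly increasing in each argument separately, strict monotonicity in the first argument gives $f(a_1, g_v(a_1)) < f(a_2, g_v(a_1))$, and monotonicity in the second argument (non-strict suffices here, though we have strict) gives $f(a_2, g_v(a_1)) \le f(a_2, g_v(a_2))$. Chaining these yields $v = f(a_1, g_v(a_1)) < f(a_2, g_v(a_2)) = v$, a contradiction. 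Hence $g_v(a_1) > g_v(a_2)$, so $g_v$ is strictly decreasing.

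Next I would prove \emph{continuity} of $g_v$ at an arbitrary point $a_0 \in \mathbb{R}^+$. Because $g_v$ is monotone, it suffices to rule out a jump discontinuity: the one-sided limits $g_v(a_0^-)$ and $g_v(a_0^+)$ exist (monotone functions have one-sided limits), and I must show they both equal $g_v(a_0)$. Suppose, say, $\ell := g_v(a_0^+) < g_v(a_0)$. Pick any value $c$ strictly between $\ell$ and $g_v(a_0)$. For $a$ slightly larger than $a_0$ we have $g_v(a) \le \ell < c$, so by monotonicity of $f$ in its second argument, $v = f(a, g_v(a)) \le f(a, c)$; letting $a \downarrow a_0$ and using continuity of $f$ gives $v \le f(a_0, c)$. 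But $c < g_v(a_0)$ together with strict monotonicity of $f$ in the second argument gives $f(a_0, c) < f(a_0, g_v(a_0)) = v$, a contradiction. The symmetric argument handles $g_v(a_0^-)$, and the same scheme covers the boundary behavior as $a$ ranges over the open interval $(0,1)$ (or $\mathbb{R}^+$), so $g_v$ is continuous everywhere on its domain.

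I expect the main subtlety — not a deep obstacle, but the point requiring care — to be handling the continuity argument purely from one-sided limits of a monotone function together with the joint continuity of $f$, rather than invoking an inverse function theorem that would need differentiability. One should also note at the outset that $g_v(a)$ is \emph{well-defined} (single-valued): if $f(a, t_1) = f(a, t_2) = v$ with $t_1 \ne t_2$, strict monotonicity of $f$ in the second argument is immediately violated, so the hypothesized $g_v$ is genuinely a function. With well-definedness, strict monotonicity, and continuity established, the lemma follows; it is then applied (as indicated after the statement) inductively in the proof of Lemma~\ref{Lemma: Analysis, Existence of w(N, 1), w(N, 2)} to lift the single-degree-of-freedom result of Lemma~\ref{AppLemma: Single Parameter Real Analysis Result} up to $d$ degrees of freedom.
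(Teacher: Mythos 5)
Your proof is correct, and while the monotonicity half coincides with the paper's argument, your continuity half takes a genuinely different and cleaner route. The paper proves continuity \emph{first}, by contradiction: it fixes a putative discontinuity of size $\varepsilon$, then uses the strict monotonicity and continuity of $f$ together with a compactness argument on the interval $[a/2, 3a/2]$ to extract a uniform separation $\delta'' > 0$ between $f(a', g_v(a))$ and $f(a', b')$ whenever $|b' - g_v(a)| > \varepsilon$, and finally contradicts $f(a', g_v(a')) = f(a, g_v(a)) = v$ via the triangle inequality; only afterwards does it prove monotonicity of $g_v$. You reverse the order: you establish strict monotonicity of $g_v$ first (which, as you note, needs nothing but the monotonicity of $f$), and then exploit the classical fact that a monotone function has one-sided limits, so continuity reduces to ruling out a jump --- which you do by squeezing a value $c$ strictly between $g_v(a_0^+)$ and $g_v(a_0)$ and deriving $v \le f(a_0,c) < v$. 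This buys you a shorter argument that avoids the uniform-separation/infimum step entirely and in fact only uses continuity of $f$ in its first argument (with $c$ held fixed), whereas the paper's route does not presuppose monotonicity of $g_v$ when proving continuity. Your observation that $g_v$ is single-valued by strict monotonicity of $f$ in its second argument is a worthwhile addition not made explicit in the paper. The only cosmetic point is your remark about ``boundary behavior'' on $(0,1)$: the domain is the open set $\mathbb{R}^+$, so interior one-sided limits are all that is ever needed and no boundary case arises.
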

\begin{proof}
We will first show the continuity of $g$ around any $a$. Assume, for the sake of contradiction, that $g$ is not continuous. Then there must exist $a, \varepsilon > 0$, such that for every $\delta > 0$, there exists $a'$ where $|a - a'| < \delta$ such that $|g_v(a') - g_v(a)| > \varepsilon$. We will work with these $a, \varepsilon$ throughout this proof.

By the continuity of $f$, for any $\delta, \delta' > 0$, there exists $a' > 0$ where $|a' - a| < \delta$ and $|f(a', g_v(a)) - f(a, g_v(a))| < \delta'$ such that $|g_v(a') - g_v(a)| > \varepsilon$. Let this $a' = h(\delta, \delta')$ for some function $h(\cdot, \cdot)$.

Next, temporarily consider any candidate $a' \in [a/2, 3a/2]$. Let $\delta''_{a'} = \max(f(a', b + \varepsilon) - f(a', b), f(a', b) - f(a', b-\varepsilon))$. Since $f$ is strictly monotonically increasing in the second argument, $|f(a', b') - f(a', g_v(a))| > \delta''_{a'} / 2 > 0$ for all $b'$ such that $|b' - g_v(a)| > \varepsilon$. Let $\delta'' = \inf_{a' \in [a/2, 3a/2]} \delta''_{a'}/2$. Since the interval over which we are taking the infimum is closed and bounded, $\delta'' > 0$ and has the property that $|f(a', b') - f(a', g_v(a))| > \delta''$ for all $a', b'$ where $|a' - a| \le a/2$ and $|b' - g_v(a)| > \varepsilon$.

With $\delta''$ in hand, choose $\delta = a/2$ and $\delta' = \delta''/2$, and select the corresponding value $a' = h(\delta, \delta') = h(a/2, \delta''/2)$. Then we know by definition that $|g_v(a') - g_v(a)| > \varepsilon$. Yet, applying the triangle inequality,
\begin{align}
    0 &= |f(a', g_v(a')) - f(a, g_v(a))| \\
    &\ge |f(a', g_v(a')) - f(a', g_v(a))| - |f(a', g_v(a)) - f(a, g_v(a))| \\
    &\ge \delta'' - \delta' = \delta''/2 > 0,
\end{align}
a contradiction. Thus, $g$ must be continuous in $a$.

Next we will show monotonicity. For the sake of contradiction, assume that there exist some $a, \Delta$ with $\Delta > 0$ such that $g_v(a) < g_v(a + \Delta)$. Then, by definition and by the monotonicity of $f$, we find that
\[
    v = f(a, g_v(a)) < f(a, g_v(a + \Delta)) < f(a + \Delta, g_v(a+\Delta)) = v,
\]
a contradiction. Thus, we see that $g_v(a)$ must be strictly monotonically decreasing in $a$.

\end{proof}

We can now proceed to the main result.

\begin{proof} \textbf{(Proof of Lemma \ref{Lemma: Analysis, Existence of w(N, 1), w(N, 2)})}

To draw the random vector $\mathbf{B}[n]$, we can first draw a random vector $\mathbf{Z}[n]$, where each component $\mathbf{z}_i[n]$ is drawn i.i.d from a standard normal distribution, and then normalize it to unit magnitude. Observe that
\begin{align}
    \frac{\mathbf{z}_i[n]^2 \cdot p_i}{\sum_{k=1}^d \mathbf{z}_k[n]^2 \cdot p_k} = \frac{\mathbf{b}_i[n]^2 \cdot p_i}{\sum_{k=1}^d \mathbf{b}_k[n]^2 \cdot p_k}.
\end{align}
Therefore, it suffices to show that there exist positive reals $p_1, \ldots, p_d$ such that
\[
    \mathbb{E}\left[\frac{\mathbf{z}_i[n]^2 \cdot p_i}{\sum_{k=1}^d \mathbf{z}_k[n]^2 \cdot p_k}\right] = v_i
\]
for all $1 \le i \le d$. For notational convenience, we will write $\mathbf{z}_i$ in place of $\mathbf{z}_i[n]$, omitting the $[n]$, from now on.

We will now prove the following claim: for any positive coefficients $\alpha_1, \cdots, \alpha_m$, there exist positive coefficients $\beta_{m+1}, \cdots, \beta_{d}$ such that
\[
    \mathbb{E}\left[ \frac{ \beta_i \mathbf{z}_i^2 } { \sum_{k=1}^m \alpha_k \mathbf{z}_k^2 + \sum_{k={m+1}}^{d} \beta_k \mathbf{z}_k^2} \right] = v_i
\]
for $m + 1 \le i \le d$. Furthermore, we will show that the $\beta_i$ are continuous, monotonically increasing functions of the $\alpha_i$.

We will prove this by backwards induction over $m$. At the base-case of $m = d - 1$, we want to choose $\beta_d$ such that
\begin{align} \label{AppEqn: BaseCaseGoal}
    \mathbb{E}\left[\frac{ \mathbf{z}_d^2 } { \beta_d^{-1} \sum_{i=1}^{d-1} \alpha_i \mathbf{z}_i^2 + \mathbf{z}_d^2}\right] = v_d.
\end{align}
Let $\alpha_{min}$ and $\alpha_{max}$ be the smallest and largest values in $\{ \alpha_1, \cdots, \alpha_{d-1} \}$.

Observe that 
\begin{align*}
    \mathbb{E}\left[\frac{\mathbf{z}_d^2}{\beta_d^{-1}\alpha_{max} \sum_{i=1}^{d-1} \mathbf{z}_i^2 + \mathbf{z}_d^2}\right] &\le \mathbb{E}\left[\frac{\mathbf{z}_d^2}{\beta_d^{-1} \sum_{i=1}^{d-1} \alpha_i \mathbf{z}_i^2 + \mathbf{z}_d^2}\right] \\
    &\le \mathbb{E}\left[\frac{\mathbf{z}_d^2}{\beta_d^{-1}\alpha_{min} \sum_{i=1}^{d-1} \mathbf{z}_i^2 + \mathbf{z}_d^2}\right].
\end{align*}
Both sides of the above inequality can be driven to any value in the interval $(0, 1)$, from Lemma \ref{AppLemma: Single Parameter Real Analysis Result}. From Lemma \ref{AppLemma: Single Parameter Real Analysis Result}, we have that the left-hand-side of \eqref{AppEqn: BaseCaseGoal} is strictly monotonically decreasing with each of the $\alpha_i$ and in $\beta_d^{-1}$. Moreover, viewing $\beta_d^{-1}$ as the parameter to be varied in order to achieve the equality \eqref{AppEqn: BaseCaseGoal}, an extension of Lemma \ref{Lemma: Continuous Inverse Function} indicates that $\beta_d$ is a continuous, strictly monotonically increasing function of each of the $\alpha_i$. %

Now assume that our result holds for $m = k + 1$ for some integer $0 \le k \le d - 2$. We will show that it holds for $m = k$ as well. Let $p > 0$ be a parameter to be chosen later. Let $\alpha_{k+1} = p$, for the purposes of choosing $\{ \beta_{k+2}, \cdots, \beta_d \} $ using our inductive hypothesis, such that
\begin{align} \label{AppEqn: Inductive Step Alpha Beta}
    \mathbb{E}\left[\frac{ \beta_i \mathbf{z}_i^2 } { \sum_{j=1}^{k+1} \alpha_j \mathbf{z}_j^2 + \sum_{j={k+2}}^{d} \beta_j \mathbf{z}_j^2}\right] &= v_i
\end{align}
for $k + 2 \le i \le d$. By our inductive hypothesis, these $\beta_i$ are continuous, monotonically increasing functions of $\alpha_1, \cdots, \alpha_k$ and of $p$.

Let
\[
    f_k(p) = \mathbb{E}\left[\frac{p \mathbf{z}_{k+1}^2}{\sum_{i=1}^{k} \alpha_i \mathbf{z}_i^2 + p \mathbf{z}_{k+1}^2 + \sum_{i={k+2}}^{d} \beta_i \mathbf{z}_i^2}\right].
\]

When $p \to \infty$, we see that, since by the inductive hypthesis the $\beta_i$s are all positive,
\begin{align*}
    &\mathbb{E}\left[\frac{\sum_{i=1}^{k} \alpha_i \mathbf{z}_i^2}{\sum_{i=1}^{k} \alpha_i \mathbf{z}_i^2 + p \mathbf{z}_{k+1}^2 + \sum_{i={k+2}}^{d} \beta_i \mathbf{z}_i^2}\right] \\
    \le &\mathbb{E}\left[\frac{\sum_{i=1}^{k} \alpha_i \mathbf{z}_i^2}{\sum_{i=1}^{k} \alpha_i \mathbf{z}_i^2 + p \mathbf{z}_{k+1}^2}\right] \\
    =& 1 - \mathbb{E}\left[\frac{\mathbf{z}_{k+1}^2}{p^{-1}\sum_{i=1}^{k} \alpha_i \mathbf{z}_i^2 + \mathbf{z}_{k+1}^2 }\right] \to 0,
\end{align*}
applying Lemma \ref{AppLemma: Single Parameter Real Analysis Result} again in the last step (treating $p^{-1}$ as the parameter to be varied), 
Observe that
\[
    \mathbb{E}\left[\frac{\sum_{i=1}^{k} \alpha_i \mathbf{z}_i^2 + p \mathbf{z}_{k+1}^2}{\sum_{i=1}^{k} \alpha_i \mathbf{z}_i^2 + p \mathbf{z}_{k+1}^2 + \sum_{i={k+2}}^{d} \beta_i \mathbf{z}_i^2}\right] = 1 - \sum_{i = k + 2}^d v_i,
\]
by construction of the $\beta_i$. Thus, as $p \to \infty$,
\begin{align*}
    f_k(p) &= \mathbb{E}\left[\frac{\sum_{i=1}^{k} \alpha_i \mathbf{z}_i^2 + p \mathbf{z}_{k+1}^2}{\sum_{i=1}^{k} \alpha_i \mathbf{z}_i^2 + p \mathbf{z}_{k+1}^2 + \sum_{i={k+2}}^{d} \beta_i \mathbf{z}_i^2}\right] \\
    &- \mathbb{E}\left[\frac{\sum_{i=1}^{k} \alpha_i \mathbf{z}_i^2}{\sum_{i=1}^{k} \alpha_i \mathbf{z}_i^2 + p \mathbf{z}_{k+1}^2 + \sum_{i={k+2}}^{d} \beta_i \mathbf{z}_i^2}\right] \\
    \to& \left(1 - \sum_{i = k + 2}^d v_i\right) - 0 \\
    =& 1 - \sum_{i = k + 2}^d v_i.
\end{align*}
Similarly, when $p \to 0$, we have that
\[
    f_k(p) \le \mathbb{E}\left[\frac{ p \mathbf{z}_{k+1}^2 } { \sum_{i=1}^{k} \alpha_i \mathbf{z}_i^2}\right] \to 0.
\]
So by varying $p \in (0, \infty)$, we can move $f_k(p)$ arbitrarily close to both $0$ and $1 - \sum_{i=k+2}^d v_i$. Since the expected value of a continuous function of $p$ is itself continuous, we see that $f_k(\cdot)$ is a continuous function, as it depends on $p$ and on $\beta_i$ with the $\beta_i$ continuous functions of $p$ because of the inductive hypothesis. Since $0 < v_{k+1} < 1 - \sum_{i=k+2}^d v_i$, the Intermediate Value Theorem states that there exists some $p \in (0, 1)$ such that $f_k(p) = v_{k+1}$. We will choose this value of $p$ as our candidate for $\beta_{k+1}$.

Recall that for the purposes of choosing the $\beta_i$, we let $\alpha_{k+1} = p$. Since we are choosing $\beta_{k+1} = p$, we can substitute $\beta_{k+1}$ for $\alpha_{k+1} = p$ in \eqref{AppEqn: Inductive Step Alpha Beta} to obtain
\begin{align}
    &\mathbb{E}\left[\frac{ \beta_i \mathbf{z}_i^2 } { \sum_{i=1}^{k} \alpha_i \mathbf{z}_i^2 + \sum_{i={k+1}}^{d} \beta_i \mathbf{z}_i^2}\right] \\
    =& \mathbb{E}\left[\frac{ \beta_i \mathbf{z}_i^2 } { \sum_{i=1}^{k} \alpha_i \mathbf{z}_i^2 + p \mathbf{z}_{k+1} + \sum_{i={k+2}}^{d} \beta_i \mathbf{z}_i^2}\right] = v_i
\end{align}
for $k + 2 \le i \le d$ by construction of $\beta_{k+2}, \cdots, \beta_d$ using the inductive hypothesis.  Furthermore, we have by our choice of $p$ that
\[
    \mathbb{E}\left[\frac{ \beta_i \mathbf{z}_{k+1}^2 } { \sum_{i=1}^{k} \alpha_i \mathbf{z}_i^2 + \sum_{i={k+1}}^{d} \beta_i \mathbf{z}_i^2}\right] = f_k(p) = v_{k+1}.
\]
By our inductive hypothesis, we have that the $\beta_{k+2}, \cdots, \beta_d$ are continuous, monotonically increasing functions of the $\alpha_i$ and of $p$. 
So to complete the inductive step, it suffices to show that $p (=\beta_{k+1})$ is also a continuous, monotonically increasing function of each of the $\alpha_i$.  Observe that
\begin{align}
    f_k(p) =& 1 - \sum_{i = k + 2}^d v_i \\
    &- \mathbb{E}\left[\frac{\sum_{i=1}^{k} \alpha_i \mathbf{z}_i^2}{\sum_{i=1}^{k} \alpha_i \mathbf{z}_i^2 + p \mathbf{z}_{k+1} + \sum_{i={k+2}}^{d} \beta_i \mathbf{z}_i^2}\right].
\end{align}

Since $\{ \beta_{k+2}, \cdots, \beta_d \}$ are monotonically increasing functions of $p$, we see from a extension of Lemma \ref{AppLemma: Single Parameter Real Analysis Result} that $f_k(p)$ is a continuous, strictly monotonically increasing function of $p$ and a continuous, strictly monotonically decreasing function of each of the $\alpha_i$. Thus, applying Lemma \ref{Lemma: Continuous Inverse Function} (treating $p^{-1}$ as a parameter in place of $p$), we see that the optimal value of $p^{-1}$ is a continuous, strictly monotonically decreasing function of the $\alpha_i$. Thus, the optimal value of $p$ increases continuously and strictly monotonically with any of the $\{ \alpha_1, \cdots, \alpha_k \}$, as desired.

This concludes the inductive step. Our desired result follows from our inductive claim at $m = 0$.
\end{proof}

\subsection{Proof of Lemma \ref{Lemma: Main}} \label{AppSubsec: Lemma: Main}
We must first establish a helper result.

\begin{lemma} \label{Lemma: Linear Constraint}
\begin{align} \label{Eqn: Lemma, W, M pairs of elements, Linear Constraint}
    &\sum_{i=1}^d \mathbb{E}[\textbf{\emph{M}}_{ii}[n]] = \sum_{i=1}^d \mathbb{E} \left[ \frac{(W[n+1] \textbf{\emph{B}}[n]\textbf{\emph{B}}[n]^T)_{ii}}{\textbf{\emph{B}}[n]^T W[n+1] \textbf{\emph{B}}[n]} \right] = 1
\end{align}
\end{lemma}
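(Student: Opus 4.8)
The plan is to reduce the identity to the cyclic invariance of the trace, after which essentially nothing is left to prove. The first equality in \eqref{Eqn: Lemma, W, M pairs of elements, Linear Constraint} requires no work: it is simply the definition of $\textbf{M}[n]$ from \eqref{Eqn: Thm, Riccati Recursion for W, M[n] Definition} read off along the diagonal, i.e. $\textbf{M}_{ii}[n] = (W[n+1]\textbf{B}[n]\textbf{B}[n]^T)_{ii} / (\textbf{B}[n]^T W[n+1]\textbf{B}[n])$. For the second equality I would work with a fixed realization of $\textbf{B}[n]$: since $\Vert \textbf{B}[n] \Vert_2 = 1$ and $W[n+1]$ is symmetric positive definite (Proposition \ref{Prop: Riccati Recursion, Properties}), the denominator $\textbf{B}[n]^T W[n+1]\textbf{B}[n]$ is strictly positive, so each summand is well defined.

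The key step is the observation
\begin{align*}
    \sum_{i=1}^d \big(W[n+1]\textbf{B}[n]\textbf{B}[n]^T\big)_{ii} &= \mathrm{tr}\big(W[n+1]\textbf{B}[n]\textbf{B}[n]^T\big) \\
    &= \mathrm{tr}\big(\textbf{B}[n]^T W[n+1]\textbf{B}[n]\big) = \textbf{B}[n]^T W[n+1]\textbf{B}[n],
\end{align*}
where the middle equality is the cyclic property of the trace and the last holds because $\textbf{B}[n]^T W[n+1]\textbf{B}[n]$ is a scalar. Dividing through by the positive denominator, I conclude that $\sum_{i=1}^d \textbf{M}_{ii}[n] = 1$ for every realization of $\textbf{B}[n]$; that is, this sum is almost surely the constant $1$. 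Taking expectations of both sides then yields $\sum_{i=1}^d \mathbb{E}[\textbf{M}_{ii}[n]] = \mathbb{E}[1] = 1$, as claimed.

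I do not anticipate any real obstacle: the entire content is the trace identity above. The only point deserving a word of care is the interchange of the finite sum with the expectation (equivalently, integrability of each summand), which is immediate since $W[n+1]$ is a fixed positive definite matrix and $\textbf{B}[n]$ is a unit vector, so each $\textbf{M}_{ii}[n]$ is bounded; alternatively, one sidesteps the issue entirely by summing first and then taking the expectation of the constant $1$.
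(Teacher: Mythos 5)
Your argument is correct and is essentially identical to the paper's proof: both exchange the finite sum with the expectation, identify the numerator sum as $\mathrm{tr}(W[n+1]\textbf{B}[n]\textbf{B}[n]^T)$, and apply the cyclic property of the trace to see that the ratio is identically $1$. No differences worth noting.
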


\begin{proof}
By using the cyclic property of the trace, we have:
\begin{align*}
    \sum_{i=1}^d \mathbb{E} [\mathbf{M}_{ii}[n]] &= \sum_{i=1}^d \mathbb{E} \left[ \frac{(W[n+1] \textbf{\emph{B}}[n]\textbf{\emph{B}}[n]^T)_{ii}}{\textbf{\emph{B}}[n] W[n+1] \textbf{\emph{B}}[n]^T} \right] \\
     &= \mathbb{E} \left[ \sum_{i=1}^d\frac{(W[n+1] \textbf{\emph{B}}[n]\textbf{\emph{B}}[n]^T)_{ii}}{\textbf{\emph{B}}[n] W[n+1] \textbf{\emph{B}}[n]^T} \right] \\    
     &= \mathbb{E} \left[\frac{tr(W[n+1] \textbf{\emph{B}}[n]\textbf{\emph{B}}[n]^T)}{\textbf{B}[n] W[n+1] \textbf{\emph{B}}[n]^T}\right] \\
    &= \mathbb{E} \left[\frac{tr(\textbf{B}[n]^TW[n+1] \textbf{B}[n])}{\textbf{\emph{B}}[n]^T W[n+1] \textbf{\emph{B}}[n]}\right] \\
    &= 1.
\end{align*}
\end{proof}

With this result in hand, we can complete the proof of Lemma \ref{Lemma: Main}. We restate it below for convenience.
\lemmamain*

\begin{proof} (\textbf{Proof of Lemma \ref{Lemma: Main}})
To aid us in selecting an appropriate $P$, define:
\begin{align} \label{Eqn: v(N, 1) star formula}
    v_{i}^* = 1 - \frac{(d-1)\lambda_i^{-2}}{\sum_j \lambda_j^{-2}}
\end{align}
for all $1 \le i \le d$. Since $\lambda_i > 0$, we have that $v_{i}^* < 1$ for each $i$. From our assumption \eqref{Eqn: Lemma, Main, Constraint on lambdas}, we have that $v_{i} > 0$ for all $i$. From Lemma \ref{Lemma: Linear Constraint}, we have that $\sum_{i=1}^d m_{N-1,d} = 1$, where $m_{n, i}$ is defined to equal $\mathbb{E}[\textbf{M}_{ii}[n]]$ for convenience. Recall from the definition of $\mathbf{M}[n]$ in \eqref{Eqn: M[i, j] Definition} that $\mathbf{M}[N-1]$ is a function of $W[N] = P$ such that $m_{N-1,i}$ equals the left-hand-side of \eqref{Eqn: Lemma, Analysis, Existence of w, Eqn. 1} in the statement of Lemma \ref{Lemma: Analysis, Existence of w(N, 1), w(N, 2)}. Since the $m_{N-1,i}$ sum to $1$, Lemma \ref{Lemma: Analysis, Existence of w(N, 1), w(N, 2)} then implies the existence of some $\{ p_i^\star \}$ such that the corresponding $m_{N-1, i} = v_{i}^\star$ for all $i$.%
Now, set:
\begin{align} \nonumber
    P^\star := \text{diag}\{ p^*_1, p^*_2, \cdots, p_d^* \},
\end{align}
and define $\{W[n]\}_{n=0}^N$ by the Riccati recursion \eqref{Eqn: Thm, Riccati Recursion for W, at time N}, \eqref{Eqn: Thm, Riccati Recursion for W, Iterative Recursion}.

Below, we establish that $W[n] = r^{N-n} P$ by backwards induction from $n = N$. Since $W[N] = P$, this relation holds at time $n = N$. Now, suppose $W[n+1] = r^{N-(n+1)} P$ holds for some $n \in \{ 0, 1, \cdots, N-1 \}$. Then for any indices $i, j$:
\begin{align} \nonumber
    \frac{w_{n+1, i}^\star}{w_{n+1, j}^\star} = \frac{p_i^\star}{p_j^\star}.
\end{align}
By its definition in Proposition \ref{Prop: Riccati Recursion, Properties},
\begin{align}
    w_{n, i}^\star &= \lambda_i^2w_{n+1, i}^\star (1 - m_{n, i}).
\end{align}
Observe that $m_{n,i}$ depends only on the ratios $w_{n+1,i} / w_{n+1,k}$ for all $1 \le k \le d$. By our inductive assumption, these ratios equal $p_i^\star / p_k^\star$, and so are independent of $n$. Thus,
\[
    m_{n,i} = m_{N-1,i} = v_{i}^\star.
\]
Applying this to our recursion for $\{ w_{n, i} \}_{n=1}^\infty$, we find that
\begin{align} \label{Eqn: Lemma, Main, W}
    w^\star_{n,i} = \lambda_i^2w^\star_{n+1,i}(1 - m_{n, i}) = \lambda_i^2w^\star_{n+1,i}(1 - v_{i}^\star).
\end{align}

We claim that $W[n] = r \cdot W[n+1]$, where 
\begin{align} \nonumber
    r = \frac{d-1}{\sum_{i=1}^d \lambda_i^{-2}}.
\end{align}
This follows by rearranging terms in \eqref{Eqn: Lemma, Main, W}, where by the definitions of $v_{i}^\star$:
\begin{align*}
    \frac{w_{n, i}^\star}{w_{n+1, i}^\star} &= \lambda_i^2(1 - v_{i}^\star) = \frac{d-1}{\sum_j \lambda_j^{-2}}.
\end{align*}
Thus, $w_{n, i}^\star = p_i^\star \cdot r^{N-n}$ , with
 $ r = \frac{(d-1)}{\sum_i \lambda_i^{-2}}.$
This completes the proof.

\end{proof}

\subsection{Proof of Lemma \ref{Lemma: Embedded System}} \label{AppSubsec: Lemma: Embedded System}

Consider the simultaneous evolution of systems $\mathcal{S}$ and $\overline{\mathcal{S}}$. Let $T$ be a $(d-1) \times d$ matrix defined to be
\[
    T = \begin{bmatrix} I_{d-1} & \vec{0} \end{bmatrix}.
\]
Observe that $TA = A'T$. By Lemma \ref{AppSubsec: Lemma: Expanding or Projecting Dimension}(2), we can draw $\mathbf{B}'[n] = T \mathbf{B}[n] / \|T \mathbf{B}[n]\|$
since this yields a uniform distribution over the unit $(d-1)$-dimensional hypersphere. Let us couple $\mathcal{S}$ and $\overline{\mathcal{S}}$ in such a manner. For a given initial state $\vec{x}[0]$ for system $\mathcal{S}$, let the initial state $\vec{x}'[0]$ for $\overline{\mathcal{S}}$ be $T \vec{x}[0]$.

Imagine, for the sake of contradiction, that there exists a control strategy that can second-moment stabilize $\mathcal{S}$. Let this strategy produce control input $u[n]$ for system $\mathcal{S}$ at time $n$. Then we will construct a control strategy for the system $\overline{\mathcal{S}}$, defined to apply the input
\[
    u'[n] = u[n] \cdot \| T \mathbf{B}[n] \|
\]
at time step $n$. With this input,
\begin{align*}
    \mathbf{X}'[n+1] &= A' \mathbf{X}'[n] + (T \mathbf{B}[n] / \| T \mathbf{B}[n] \|) (u[n] \cdot \| T \mathbf{B}[n] \|) \\
    &= A' \mathbf{X}'[n] + T \mathbf{B}[n] u[n].
\end{align*}
We claim that $\mathbf{X}'[n] = T\mathbf{X}[n]$. To prove this inductively, assume that it is true for a particular $n$. Then, at timestep $n + 1$,
\begin{align*}
    \mathbf{X}'[n+1] &= A' T \mathbf{X}[n] + T \mathbf{B}[n]u[n] \\
    &= T(A\mathbf{X}[n] + \mathbf{B[n]}u[n]) \\
    &= T\mathbf{X}[n+1].
\end{align*}
Since we assumed that at $n = 0$, $\mathbf{X}'[0] = T\mathbf{X}[0]$, we have shown this claim via induction for all $n$. Now, we claim that $u'[n]$ stabilizes $\overline{\mathcal{S}}$. To see this, observe that
\[
    \mathbb{E}[\mathbf{X}[n]^2] \ge \mathbb{E}[\mathbf{X}'[n]^2]
\]
at each time-step. So if $\mathbb{E}[\mathbf{X}[n]^2] \to 0$ in the limit, so does $\mathbb{E}[\mathbf{X}'[n]^2]$, so $\overline{\mathcal{S}}$ must be stabilizable. This is a contradiction, so no control strategy can exist to stabilize $\mathcal{S}$. So $\mathcal{S}$ is not second-moment stabilizable either.

\subsection{Statement and Proof of Lemma \ref{Lemma: Expanding or Projecting Dimension}} \label{AppSubsec: Lemma: Expanding or Projecting Dimension}

We require the following facts to prove Lemma \ref{Lemma: Expanding or Projecting Dimension}.

\begin{proposition} \label{Prop: Area of a d-dimensional sphere}
Consider the change of Cartesian to generalized spherical coordinates on $\mathcal{S}^d$, denoted $(x_1, \cdots, x_d) \ra (\varphi_1, \cdots, \varphi_{d-1})$, as given below, where we have adopted the abbreviations $c_k := \cos \varphi_k$ and $s_k := \sin \varphi_k$ for each $k = 1, \cdots, d$ for brevity:
\begin{align*}
f: \hspace{2cm} \mathcal{S}^d &\ra \big( [0, \pi) \big)^{d-2} \times [0, 2\pi) \\
(x_1, \cdots, x_d) &\mapsto (\varphi_1, \cdots, \varphi_{d-1}) \\
x_1 &= c_1 \\
x_2 &= s_1 c_2 \\
x_3 &= s_1 s_2 c_3 \\
&\vdots \\
x_{d-1} &= s_1 s_2 \cdots s_{d-2} c_{d-1} \\
x_d &= s_1 s_2 \cdots s_{d-2} s_{d-1},
\end{align*}
where $\varphi_k \in [0, \pi]$ for each $k = 1, \cdots, d-2$, and $\varphi_{d-1} \in [0, 2\pi)$.

Then the area of $\mathcal{S}_d$ is given by:
\begin{align}
    \emph{Area}(\mathcal{S}_d) &= 2\pi \cdot \prod_{k=1}^{d-2} \int_0^{\pi} (\sin \varphi_k)^{d-k-1} \hspace{0.5mm} d\varphi_k.
\end{align}
\end{proposition}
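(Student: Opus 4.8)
The plan is to regard the inverse map $g := f^{-1}\colon (\varphi_1,\dots,\varphi_{d-1}) \mapsto (x_1,\dots,x_d)$ as a parametrization of $\mathcal{S}_d$ and to compute the induced $(d-1)$-dimensional surface measure through the Gram determinant of its Jacobian. Writing $x_i = \big(\prod_{k=1}^{i-1}\sin\varphi_k\big)\cos\varphi_i$ for $1 \le i \le d-1$ and $x_d = \prod_{k=1}^{d-1}\sin\varphi_k$, with the convention that an empty product equals $1$, the area is $\mathrm{Area}(\mathcal{S}_d) = \int_{[0,\pi]^{d-2}\times[0,2\pi)} \sqrt{\det(J^\top J)}\; d\varphi_1\cdots d\varphi_{d-1}$, where $J$ is the $d\times(d-1)$ matrix whose $j$-th column is $\partial g/\partial\varphi_j$. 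So the heart of the proof is evaluating $\sqrt{\det(J^\top J)}$.

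First I would show that the columns of $J$ are pairwise orthogonal, so that $J^\top J$ is diagonal and $\sqrt{\det(J^\top J)} = \prod_{j=1}^{d-1}\Vert \partial g/\partial\varphi_j\Vert$. The key structural fact is that $x_\ell$ depends only on $\varphi_1,\dots,\varphi_\ell$; hence in the inner product $\langle \partial g/\partial\varphi_i,\, \partial g/\partial\varphi_j\rangle$ with $i<j$ only the coordinates $\ell \ge j$ contribute, and for each such $\ell$ one has $\partial x_\ell/\partial\varphi_i = (\cos\varphi_i/\sin\varphi_i)\,x_\ell$. Together with the identity $\sum_{\ell \ge j} x_\ell^2 = \prod_{k<j}\sin^2\varphi_k$, which follows by a one-line induction from $\sum_\ell x_\ell^2 = 1$, the cross terms telescope to zero. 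The same identity gives $\sum_{\ell > j} x_\ell^2 = \big(\prod_{k<j}\sin\varphi_k\big)^2\sin^2\varphi_j$, and a short computation then yields $\Vert \partial g/\partial\varphi_j\Vert = \prod_{k=1}^{j-1}\sin\varphi_k$. Multiplying over $j = 1,\dots,d-1$, the factor $\sin\varphi_k$ occurs once for each $j \ge k+1$, i.e.\ with multiplicity $d-1-k$, so $\sqrt{\det(J^\top J)} = \prod_{k=1}^{d-2}(\sin\varphi_k)^{d-k-1}$.

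It then remains to integrate: since the integrand is a product of single-variable functions over the box $[0,\pi]^{d-2}\times[0,2\pi)$, Fubini's theorem gives $\mathrm{Area}(\mathcal{S}_d) = \big(\int_0^{2\pi}d\varphi_{d-1}\big)\prod_{k=1}^{d-2}\int_0^\pi (\sin\varphi_k)^{d-k-1}\,d\varphi_k = 2\pi\prod_{k=1}^{d-2}\int_0^\pi(\sin\varphi_k)^{d-k-1}\,d\varphi_k$, which is the claim. The one place that requires care is the orthogonality-and-norm computation in the previous paragraph; everything else is bookkeeping. A cleaner alternative that avoids the full Jacobian is induction on $d$: fixing $\varphi_1$, the rescaled vector $(x_2,\dots,x_d)/\sin\varphi_1$ sweeps out $\mathcal{S}_{d-1}$, and the metric on the slice $\{\varphi_1 = \text{const}\}$ is $\sin^2\varphi_1$ times that of $\mathcal{S}_{d-1}$, which yields the recursion $\mathrm{Area}(\mathcal{S}_d) = \big(\int_0^\pi(\sin\varphi_1)^{d-2}\,d\varphi_1\big)\,\mathrm{Area}(\mathcal{S}_{d-1})$ with base case $\mathrm{Area}(\mathcal{S}_2) = 2\pi$; unrolling it reproduces the stated formula.
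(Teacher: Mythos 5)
Your proof is correct, and it arrives at the same surface-area element $\prod_{k=1}^{d-2}(\sin\varphi_k)^{d-k-1}$ as the paper before integrating over the coordinate box via Fubini; the difference lies entirely in how that element is evaluated. The paper forms a $d\times d$ determinant whose first column is the position vector $(x_1,\ldots,x_d)$ itself and whose remaining columns are $\partial x/\partial\varphi_1,\ldots,\partial x/\partial\varphi_{d-1}$ (effectively the Jacobian of full spherical coordinates in $\R^d$ restricted to $r=1$), and evaluates it by the cofactor recursion $\Delta_d = s_1^{d-2}\,\Delta_{d-1}$, leaving implicit why this scalar is the surface measure. You instead take the genuine $d\times(d-1)$ Jacobian $J$ of the parametrization and compute the Gram determinant $\sqrt{\det(J^\top J)}$, using the facts that $\partial x_\ell/\partial\varphi_i = (\cos\varphi_i/\sin\varphi_i)\,x_\ell$ for $i<\ell$ and that $\sum_{\ell\ge j}x_\ell^2 = \prod_{k<j}\sin^2\varphi_k$ to show the columns are pairwise orthogonal with norms $\prod_{k<j}\sin\varphi_k$; I checked these computations and they are right, and the exponent count $d-1-k$ matches. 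Your route is a bit longer but is the fully rigorous derivation of the induced surface measure of a parametrized hypersurface, whereas the paper's is a quicker determinant identity; your closing inductive alternative (slicing by $\varphi_1$ and rescaling to $\mathcal{S}_{d-1}$) is also sound and is the variant closest in spirit to how the proposition is actually deployed in Lemma~\ref{Lemma: Expanding or Projecting Dimension}.
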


\begin{proof}
Let $\Delta_d$ denote the Jacobian of the bijective map $f$, i.e.:
\begin{align}
    \Delta_d := \frac{\partial(x_1, \cdots, x_d)}{\partial(\varphi_1, \cdots \varphi_{d-1})} 
\end{align}
\begin{align*}
\Delta_3 &= \left| \begin{matrix}
c_1 & - s_1 & 0 \\
s_1 c_2 &  c_1 c_2 & - s_1 s_2 \\
s_1 s_2 &  c_1 s_2 &  s_1 c_2
\end{matrix} \right| \\
&= [c_1 \cdot  c_1 - (- s_1) \cdot s_1 (c_2^2 + s_2^2) ] \\
&= s_1,
\end{align*}
Then, for each $d \geq 3$:
\tiny
\begin{align*}
\Delta_d &= \frac{\partial(x_1, \cdots, x_d)}{\partial(\varphi_1, \cdots \varphi_{d-1})} \\
&= \left|\begin{matrix}
c_1 & - s_1 & 0 & \cdots & 0 \\
s_1 c_2 &  c_1 c_2 & - s_1 s_2 & \cdots & 0 \\
s_1 s_2 c_3 &  c_1 s_2 c_3 &  s_1 c_2 c_3 & \cdots & 0 \\
\vdots & \vdots & \vdots & \ddots & \vdots \\
s_1 s_2 s_3 \cdots c_{d-1} &  c_1 s_2 s_3 \cdots c_{d-1} &  s_1 c_2 s_3 \cdots c_{d-1} & \cdots & - s_1 s_2 s_3 \cdots s_{d-1} \\
s_1 s_2 s_3 \cdots s_{d-1} &  c_1 s_2 s_3 \cdots s_{d-1} &  s_1 c_2 s_3 \cdots s_{d-1} & \cdots &  s_1 s_2 s_3 \cdots c_{d-1} 
\end{matrix} \right| \\
&= [c_1 \cdot  c_1 \cdot s_1^{d-2} - (- s_1) \cdot s_1 \cdot s_1^{d-2}] \\
&\hspace{1cm} \cdot \left|\begin{matrix}
c_2 & - s_2 & \cdots & 0 \\
s_2 c_3 &  c_2 c_3 & \cdots & 0 \\
\vdots & \vdots & \ddots & \vdots \\
s_2 s_3 \cdots c_{d-1} &  c_2 s_3 \cdots c_{d-1} & \cdots & - s_2 s_3 \cdots s_{d-1} \\
s_2 s_3 \cdots s_{d-1} &  c_2 s_3 \cdots s_{d-1} & \cdots &  s_2 s_3 \cdots c_{d-1} 
\end{matrix} \right| \\
&=  s_1^{d-2} \cdot \Delta_{d-1}
\end{align*}
\normalsize
By induction, we conclude that:
\begin{align} \nonumber
    \Delta_d := \frac{\partial(x_1, \cdots, x_d)}{\partial(\varphi_1, \cdots \varphi_{d-1})} = s_1^{d-2} s_2^{d-3} \cdots s_{d-3}^2 s_{d-2}
\end{align}
The area of $\mathcal{S}_d$ can now be computed as follows:
\small
\begin{align*}
&\text{Area}(\mathcal{S}_d) \\
=& \int_0^\pi \cdots \int_0^\pi \int_0^{2\pi} (s_1^{d-2} s_2^{d-3} \cdots s_{d-3}^2 s_{d-2}) \hspace{0.5mm} d\varphi_{d-1} d\varphi_{d-2} \cdots d\varphi_1 \\
=& 2\pi \cdot \int_0^\pi \sin\varphi_{d-2} \hspace{0.5mm} d\varphi_{d-2} \cdot \int_0^\pi \sin^2 \varphi_{d-1} \hspace{0.5mm} d\varphi_{d-1} \cdots \int_0^\pi (\sin\varphi_1)^{d-2} \hspace{0.5mm} d\varphi_1,
\end{align*}
\normalsize
as claimed.
\end{proof}

The following is a direct corollary of the above proposition.

\begin{corollary} \label{Cor: Corollary to Prop, Area of a d-dimensional sphere}
Let $f: \mathcal{S}^d \ra \big( [0, \pi) \big)^{d-2} \times [0, 2\pi)$ denote the bijective function mapping Cartesian coordinates of points on the $d$-dimensional hypersphere, as given in Proposition \ref{Prop: Area of a d-dimensional sphere}. Fix the set (in Cartesian coordinates):
\small
\begin{align}
    S := \{ f^{-1}(S_1, \cdots, S_{d-1}) | S_1, \cdots, S_{d-2} \subset [0, \pi), S_{d-1} \subset [0, 2\pi) \} \in \mathcal{S}_d.
\end{align}
\normalsize
Then:
\begin{align}
    \emph{Area}(S) &= \ell(S_{d-1}) \cdot \prod_{k=1}^{d-2} \int_{S_k} (\sin \varphi_k)^{d-k-1} \hspace{0.5mm} d\varphi_k,
\end{align}
where $\ell(S_{d-1})$ denotes the Lebesgue measure of $S_{d-1}$.
\end{corollary}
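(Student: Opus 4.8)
The plan is to read the statement off directly from the computation already performed in the proof of Proposition~\ref{Prop: Area of a d-dimensional sphere}. That proof establishes, by induction, that the Jacobian of the coordinate bijection $f$ factorizes as
\[
    \Delta_d = \frac{\partial(x_1, \cdots, x_d)}{\partial(\varphi_1, \cdots \varphi_{d-1})} = s_1^{d-2} s_2^{d-3} \cdots s_{d-3}^2 s_{d-2} = \prod_{k=1}^{d-2} (\sin \varphi_k)^{d-k-1},
\]
which in particular has no dependence on $\varphi_{d-1}$ and is nonnegative on the domain. Since $\Delta_d$ (more precisely, its absolute value, which here equals $\Delta_d$) is the surface-area density of $\mathcal{S}_d$ in the $\varphi$-coordinates — this is exactly the fact used in the Proposition, where the first column of the displayed matrix is the unit radial direction $x$, orthogonal to the angular tangent vectors, so that $|\Delta_d|$ is the square root of the Gram determinant of $\partial x/\partial\varphi_1, \cdots, \partial x/\partial\varphi_{d-1}$ — the surface measure of any measurable subset of $\mathcal{S}_d$ equals the integral of $\Delta_d$ against $d\varphi_1\cdots d\varphi_{d-1}$ over the corresponding region in $\varphi$-space.

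Concretely, I would first apply this area formula to $S = f^{-1}(S_1 \times \cdots \times S_{d-1})$, whose preimage under $f$ is the product region $S_1 \times \cdots \times S_{d-1}$, obtaining
\[
    \text{Area}(S) = \int_{S_1} \cdots \int_{S_{d-2}} \int_{S_{d-1}} \Delta_d \; d\varphi_{d-1}\, d\varphi_{d-2} \cdots d\varphi_1 .
\]
Because $\Delta_d = \prod_{k=1}^{d-2}(\sin\varphi_k)^{d-k-1}$ is a product of univariate factors and is independent of $\varphi_{d-1}$, Fubini's theorem (valid since the integrand is nonnegative and the $S_k$ are measurable) separates this iterated integral into
\[
    \text{Area}(S) = \Big(\int_{S_{d-1}} d\varphi_{d-1}\Big) \cdot \prod_{k=1}^{d-2} \int_{S_k} (\sin \varphi_k)^{d-k-1}\, d\varphi_k ,
\]
and identifying $\int_{S_{d-1}} d\varphi_{d-1} = \ell(S_{d-1})$ yields the claimed identity. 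Choosing $S_k = [0,\pi)$ for $k \le d-2$ and $S_{d-1}=[0,2\pi)$ recovers Proposition~\ref{Prop: Area of a d-dimensional sphere}, which is a consistency check.

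The only point that needs any care — the "main obstacle," though it is a mild one — is the justification that the $(d-1)$-dimensional surface measure on $\mathcal{S}_d$ is indeed $\Delta_d\, d\varphi_1\cdots d\varphi_{d-1}$ with the $\Delta_d$ computed above, rather than some other density; but this is precisely what the proof of Proposition~\ref{Prop: Area of a d-dimensional sphere} already uses when it writes $\text{Area}(\mathcal{S}_d)$ as the integral of $s_1^{d-2}s_2^{d-3}\cdots s_{d-2}$, so I would simply invoke that proof. Everything else is the elementary observation that the integral of a product of single-variable functions over a product domain factorizes as a product of one-dimensional integrals.
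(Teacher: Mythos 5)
Your argument is correct and matches the paper's treatment: the paper simply notes the corollary is a direct consequence of Proposition~\ref{Prop: Area of a d-dimensional sphere}, i.e., one integrates the same Jacobian density $\Delta_d = \prod_{k=1}^{d-2}(\sin\varphi_k)^{d-k-1}$ over the product region $S_1 \times \cdots \times S_{d-1}$ instead of the full coordinate range, and the integral factorizes exactly as you describe. Your added remarks on Fubini and on why $\Delta_d$ is the correct surface-area density only make explicit what the paper leaves implicit.
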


\begin{lemma} \label{Lemma: Expanding or Projecting Dimension} ~\
\begin{enumerate}
    \item \textbf{\emph{Expansion}}---Let $\textbf{\emph{B}} := (\textbf{\emph{b}}_1, \cdots, \textbf{\emph{b}}_d) \in \R^d$ be drawn uniformly from $\mathcal{S}_d$. Let $\mathbf{\Theta} \in \R$ be drawn, independently of $\textbf{\emph{B}}$, with the density:
    \begin{align} \label{Eqn: Distribution of Theta}
        f_{\mathbf{\Theta}}(\theta) = \frac{(\sin\theta)^{d-1}}{\int_0^\pi (\sin t)^{d-1} dt}, \hspace{1cm} \theta \in [0, \pi).
    \end{align}
    and $f_{\mathbf{\Theta}}(\theta) = 0$ elsewhere. Define $\textbf{\emph{B}}' \in \R^{d+1}$ by:
    \begin{align}
        \textbf{\emph{B}}' := (\textbf{\emph{b}}_1 \sin\mathbf{\Theta}, \cdots, \textbf{\emph{b}}_d \sin \mathbf{\Theta}, \cos \mathbf{\Theta}).
    \end{align}
    Then the distribution of $\textbf{\emph{B}}'$ is uniform over the unit hypersphere $\mathcal{S}_d$.
    
    \item \textbf{\emph{Projection}}---Let  $\textbf{\emph{B}}' := (\textbf{\emph{b}}_1', \cdots, \textbf{\emph{b}}_d', \textbf{\emph{b}}_{d+1}') \in \R^{d+1}$ be drawn uniformly from $\mathcal{S}_{d+1}$, and define:
    \begin{align}
        \textbf{\emph{B}} := \frac{(\textbf{\emph{b}}_1', \cdots, \textbf{\emph{b}}_{d}')}{\Vert (\textbf{\emph{b}}_1', \cdots, \textbf{\emph{b}}_{d}') \Vert_2}.
    \end{align}
    Then the distribution of $\textbf{\emph{B}}$ is uniform over $\mathcal{S}_{d}$.
\end{enumerate}
\end{lemma}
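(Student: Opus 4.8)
The plan is to derive both statements from the generalized spherical-coordinate change of variables already worked out in Proposition~\ref{Prop: Area of a d-dimensional sphere} and Corollary~\ref{Cor: Corollary to Prop, Area of a d-dimensional sphere}, together with the rotation-invariance of the uniform measure on a sphere. The only genuinely new observation needed is that the map $(\mathbf{B},\mathbf{\Theta})\mapsto\mathbf{B}'$ in part~(1) is, up to a permutation of axes, exactly the inverse of a spherical parametrization of $\mathcal{S}_{d+1}$ in which one peels off a single polar angle.

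For part~(1): a point of $\mathcal{S}_{d+1}$ of the form $(\mathbf{b}_1\sin\theta,\dots,\mathbf{b}_d\sin\theta,\cos\theta)$ has its last coordinate equal to $\cos\theta$, so $\theta$ is recovered as a polar angle measured from the $(d+1)$-st axis, and since $\Vert(\mathbf{b}_1\sin\theta,\dots,\mathbf{b}_d\sin\theta)\Vert_2^2 = 1-\cos^2\theta = \sin^2\theta$, the remaining data $(\mathbf{b}_1,\dots,\mathbf{b}_d)$ is a genuine unit vector in $\mathcal{S}_d$. Because the uniform law on $\mathcal{S}_{d+1}$ is invariant under the coordinate permutation that swaps axes $1$ and $d+1$, I may apply Proposition~\ref{Prop: Area of a d-dimensional sphere} and Corollary~\ref{Cor: Corollary to Prop, Area of a d-dimensional sphere} with the distinguished angle $\varphi_1=\theta$: the area element factors as $\Delta_{d+1} = (\sin\varphi_1)^{d-1}\,\Delta_d$, so the normalized uniform measure on $\mathcal{S}_{d+1}$ is the product of the density $(\sin\varphi_1)^{d-1}/\!\int_0^\pi(\sin t)^{d-1}\,dt$ in the angle $\varphi_1$ --- which is exactly \eqref{Eqn: Distribution of Theta} --- times the normalized uniform measure on $\mathcal{S}_d$ in the remaining angles, these two factors being independent. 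Pushing this product measure forward through the coordinate map (which is bijective and bimeasurable off a null set) yields precisely the law of $\mathbf{B}'$ assembled from independent $\mathbf{B}\sim\mathrm{Unif}(\mathcal{S}_d)$ and $\mathbf{\Theta}$ with density \eqref{Eqn: Distribution of Theta}; hence $\mathbf{B}'$ is uniform on $\mathcal{S}_{d+1}$.

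For part~(2): let $\mathbf{B}'\sim\mathrm{Unif}(\mathcal{S}_{d+1})$. The event that $(\mathbf{b}_1',\dots,\mathbf{b}_d')=\vec{0}$ has measure zero, so $\mathbf{B}=(\mathbf{b}_1',\dots,\mathbf{b}_d')/\Vert(\mathbf{b}_1',\dots,\mathbf{b}_d')\Vert_2$ is well-defined almost surely. For every $Q\in O(d)$, the block matrix $\mathrm{diag}(Q,1)\in O(d+1)$ preserves the law of $\mathbf{B}'$, and the normalization map intertwines the two actions (it sends $\mathrm{diag}(Q,1)\mathbf{B}'$ to $Q\mathbf{B}$); hence the law of $\mathbf{B}$ is $O(d)$-invariant, and since the uniform distribution is the unique $O(d)$-invariant Borel probability measure on $\mathcal{S}_d$, we are done. (Alternatively, part~(2) is immediate from part~(1): writing $\mathbf{B}' = (\sin\mathbf{\Theta}\cdot\mathbf{W},\cos\mathbf{\Theta})$ with $\mathbf{W}\sim\mathrm{Unif}(\mathcal{S}_d)$ independent of $\mathbf{\Theta}$ and $\sin\mathbf{\Theta}>0$ a.s., one reads off $\mathbf{B}=\mathbf{W}$. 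A third route, matching the Gaussian realization used elsewhere in the paper, realizes $\mathrm{Unif}(\mathcal{S}_{d+1})$ as $G/\Vert G\Vert_2$ for $G\sim N(0,I_{d+1})$ and uses that the direction of the first $d$ Gaussian coordinates is uniform on $\mathcal{S}_d$ and independent of $\Vert(G_1,\dots,G_d)\Vert_2$ and of $G_{d+1}$, hence of $\mathbf{\Theta}$.)

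I do not expect a serious obstacle: the Jacobian computation that drives everything is already carried out in Proposition~\ref{Prop: Area of a d-dimensional sphere}. The two points requiring care are the relabelling of the distinguished axis from the $1$st to the $(d+1)$-st coordinate --- dispatched by rotation-invariance of the uniform measure --- and the measure-zero bookkeeping (the poles $\mathbf{\Theta}\in\{0,\pi\}$, and the vanishing of the first $d$ coordinates of $\mathbf{B}'$), which is routine; a fully rigorous writeup should also record that the spherical coordinate maps are bijective and bimeasurable off null sets, so that the identity of area measures transfers to an identity of probability laws.
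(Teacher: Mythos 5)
Your proposal is correct, and it splits naturally against the paper's own proof. For part (1) you and the paper do essentially the same thing: the paper fixes a ``product'' set $A=\{(x\sin\theta,\cos\theta):x\in A_s,\ \theta\in A_\theta\}$, factors $\Prob(\textbf{B}'\in A)$ by independence, and matches it to $\mathrm{Area}(A)/\mathrm{Area}(\mathcal{S}_{d+1})$ via Proposition~\ref{Prop: Area of a d-dimensional sphere} and Corollary~\ref{Cor: Corollary to Prop, Area of a d-dimensional sphere}, which is exactly your ``product of the $(\sin\varphi_1)^{d-1}$ marginal with the uniform law on $\mathcal{S}_d$'' pushed through the (a.e.\ bijective) coordinate map; your explicit handling of the axis relabelling (the Proposition's polar angle sits on the first coordinate, the Lemma's on the last) is a point the paper passes over silently, and is worth keeping. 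For part (2), however, your primary argument is genuinely different: the paper deduces projection from expansion by introducing the preimage set $A_s'\subset\mathcal{S}_{d+1}$ of $A_s$ under normalization, building an auxiliary $\overline{\textbf{B}'}$ from an independent pair $(\overline{\textbf{B}},\overline{\mathbf{\Theta}})$ with density \eqref{Eqn: Distribution of Theta}, and equating $\Prob(\textbf{B}'\in A_s')=\Prob(\overline{\textbf{B}'}\in A_s')$ before factoring; you instead observe that $\mathrm{diag}(Q,1)$ with $Q\in O(d)$ preserves the law of $\textbf{B}'$ and intertwines with the normalization map, so the law of $\textbf{B}$ is $O(d)$-invariant and must be uniform by uniqueness of the rotation-invariant probability measure on $\mathcal{S}_d$. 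Your route is shorter, needs no area computation and no part (1) at all for the projection claim, and disposes of the pole/measure-zero issues in one line; the paper's route has the advantage of staying entirely inside the explicit spherical-coordinate machinery it has already built (and your parenthetical ``read $\textbf{B}=\mathbf{W}$ off the representation'' alternative is, in substance, the paper's argument, where the set correspondence $A_s\leftrightarrow A_s'$ is what makes that reading-off rigorous). Either version is acceptable; if you keep the invariance argument, state explicitly the uniqueness fact you invoke for the invariant measure on the sphere.
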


\begin{proof} ~\
\begin{enumerate}
    \item \textbf{Expansion:}
    
    $\hspace{1cm}$ For each $d \in \N$, and any subset $M \subset \mathcal{S}_{d+1}$ of the $(d+1)$-dimensional hypersphere  $\mathcal{S}_{d+1} \in \R^{d+1}$ with well-defined surface area, denote the surface area of $M$ by Area($M$). Let $A$ be an arbitrarily selected subset of $\mathcal{S}_{d+1}$. Since we are concerned with uniform distributions over the hyperspherical surfaces, it suffices to show that:
    \begin{align}
        \Prob(\textbf{B}' \in A) = \frac{\text{Area}(A)}{\text{Area}(\mathcal{S}_{d+1})}.
    \end{align}
    Recall that any Lebesgue measurable set in any Euclidean space can be approximated as from below using open sets. Moreover, since Euclidean spaces are second countable, each open subset in an Euclidean space can be approximated by sequences of countable unions of open rectangles. Thus, without loss of generality, suppose $A$ satisfies the following property---There exist some open sets $A_s \subset \mathcal{S}_d$ and $A_\theta \subset [0, \pi) \subset \R$ such that:
    \begin{align}
        A = \big\{ (x_1 \sin\theta, \cdots, x_d \sin\theta, \cos\theta) \big| (x_1, \cdots, x_d) \in A_s, \theta \in A_\theta \big\}
    \end{align}
    Thus, we have:
    \begin{align} \nonumber
        \Prob(\textbf{B}' \in A) &= \Prob(\textbf{B} \in A_s, \mathbf{\Theta} \in A_\theta) \\ \label{Eqn: Lemma, Expanding or Projecting Dimension, Independence of Theta}
        &= \Prob(\textbf{B} \in A_s) \cdot \Prob(\mathbf{\Theta} \in A_\theta) \\ \label{Eqn: Lemma, Expanding or Projecting Dimension, Distribution of Theta}
        &= \frac{\text{Area}(A_s)}{\text{Area}(\mathcal{S}_d)} \cdot \int_{A_\theta} \frac{(\sin\theta)^{d-1}}{\int_0^\pi (\sin t)^{d-1} dt} \hspace{0.5mm} d\theta \\ \nonumber
        &= \frac{\text{Area}(A_s) \cdot \int_{A_\theta} (\sin \theta)^{d-1} d\theta}{\text{Area}(\mathcal{S}_d) \cdot \int_0^\pi (\sin t)^{d-1} dt} \\ \label{Eqn: Lemma, Expanding or Projecting Dimension, Apply Corollary}
        &= \frac{\text{Area}(A)}{\text{Area}(\mathcal{S}_{d+1})},
    \end{align}
    where \eqref{Eqn: Lemma, Expanding or Projecting Dimension, Independence of Theta} and \eqref{Eqn: Lemma, Expanding or Projecting Dimension, Distribution of Theta} follow because $\mathbf{\Theta}$ is distributed independently of \textbf{B} and according to \eqref{Eqn: Distribution of Theta}, while \eqref{Eqn: Lemma, Expanding or Projecting Dimension, Apply Corollary} follows from Corollary \ref{Cor: Corollary to Prop, Area of a d-dimensional sphere}.
    
    \item \textbf{Projection:}
    
    $\hspace{1cm}$ Let $\textbf{B}' := (\textbf{b}_1', \cdots, \textbf{b}_d', \textbf{b}_{d+1}') \in \R^{d+1}$ be any random, variable whose distribution is uniform over the unit hypersphere $\mathcal{S}_{d+1}$. We wish to show that:
    \begin{align*}
        \textbf{B} := \frac{(\textbf{b}_1', \cdots, \textbf{b}_d')}{\Vert (\textbf{b}_1', \cdots, \textbf{b}_d') \Vert_2}
    \end{align*}
    is uniformly distributed over the unit hypersphere $\mathcal{S}_d$. This is equivalent to showing that, for any subset $A_s \subset \mathcal{S}_d$ on the unit hypersphere $\mathcal{S}_d$, we have:
    \begin{align*}
        \Prob(\textbf{B} \in A_s) = \frac{\text{Area}(A_s)}{\text{Area}(\mathcal{S}_d)}.
    \end{align*}
    To show this, fix some arbitrary subset $A_s \subset \mathcal{S}_d$ on the unit hypersphere $\mathcal{S}_d$ satisfying $(0, \cdots, 0, 1), (0, \cdots, 0, -1) \not\in A_s$. Let us define a corresponding subset $A_s' \subset \mathcal{S}_{d+1}$ on the unit hypersphere $\mathcal{S}_{d+1}$ by:
    \small
    \begin{align*}
        &A_s' \\
        := \hspace{0.5mm} &\left\{ (x_1, \cdots, x_d, x_{d+1}) \in \mathcal{S}_{d+1} \Bigg| \frac{(x_1, \cdots, x_d)}{\Vert (x_1, \cdots, x_d) \Vert_2} \in A_s \right\} \\
        = \hspace{0.5mm} &\Bigg\{ (x_1', \cdots, x_d', x_{d+1}') \in \mathcal{S}_{d+1} \Bigg| \\
        &\exists \hspace{0.5mm} \theta \in [0, \pi) \text{ s.t. } \frac{1}{\sin\theta} \cdot (x_1', \cdots, x_d') \in A_s, \cos\theta = x_{d+1}' \Bigg\}
    \end{align*}
    \normalsize
    Then $\textbf{B} \in A_s$ if and only if $\textbf{B}' \in A_s'$.
    
    $\hspace{1cm}$ Next, we will leverage our results from the expansion portion of this lemma to complete the proof. Let $\overline{\textbf{B}} := (\overline{\textbf{b}_1}, \cdots, \overline{\textbf{b}_d}) \in \R^d$ be drawn uniformly from $\mathcal{S}_d$, and let $\overline{\mathbf{\Theta}} \in \R$ be drawn, independently of $\overline{\textbf{B}}$, with the density:
    \begin{align} \nonumber
        f_{\mathbf{\Theta}}(\theta) = \frac{(\sin\theta)^{d-1}}{\int_0^\pi (\sin t)^{d-1} dt}, \hspace{1cm} \theta \in [0, \pi).
    \end{align}
    and $f_{\mathbf{\Theta}}(\theta) = 0$ elsewhere. Define $\overline{\textbf{B}'} \in \R^{d+1}$ by:
    \begin{align} \nonumber
        \overline{\textbf{B}'} := (\overline{\textbf{b}_1} \sin\overline{\mathbf{\Theta}}, \cdots, \overline{\textbf{b}_d} \sin \overline{\mathbf{\Theta}}, \cos \overline{\mathbf{\Theta}}).
    \end{align}
    The proof of the expansion part of the theorem implies that $\overline{\textbf{B}'}$ is uniformly distributed. By definition, $\textbf{B}'$ is also uniformly distributed. Putting everything together, we have:
    \begin{align*}
        \Prob(\textbf{B} \in A_s) &= \Prob(\textbf{B}' \in A_s') = \Prob(\overline{\textbf{B}'} \in A_s') \\
        &= \Prob(\overline{\textbf{B}'} \in A_s, \overline{\mathbf{\Theta}} \in [0, \pi)) \\
        &= \Prob(\overline{\textbf{B}'} \in A_s) \cdot \Prob(\overline{\mathbf{\Theta}} \in [0, \pi)) \\
        &= \frac{\text{Area}(A_s)}{\text{Area}(\mathcal{S}_d)}.
    \end{align*}
    This concludes the proof.
\end{enumerate}

\end{proof}

\subsection{Proof of Theorem \ref{Thm: Main, d-dimensional}} \label{AppSubsec: Case 2}
\begin{proposition} \label{AppProp: Riccati Recursion, Properties, Dropped Controls}
Fix any $N \in \N$ and any diagonal, symmetric positive definite matrix $P$. Let $q$ be some real number in $(0, 1]$. Let the sequence of matrices $\{W_q[n] \mid n = 0, 1, \cdots, N\}$ be given by the Riccati-like recursive formula:
\begin{align} \label{AppEqn: Thm, Riccati Recursion for W, at time N}
    &W_q[N] = P, \\ \label{AppEqn: Thm, Riccati Recursion for W, Iterative Recursion}
    &W_q[n] = A^T \big( W_q[n+1] - q\mathbb{E}[\textbf{\emph{M}}[n]] W_q[n+1] \big) A,
\end{align}
where:
\begin{align} \label{AppEqn: Thm, Riccati Recursion for W, M[n] Definition}
    \textbf{\emph{M}}[n] := \frac{W_q[n+1] \textbf{\emph{B}}[n]\textbf{\emph{B}}[n]^T}{\textbf{\emph{B}}[n]^T W_q[n+1] \textbf{\emph{B}}[n]}.
\end{align}
Then each $W_q[n]$ is diagonal and symmetric positive definite.
\end{proposition}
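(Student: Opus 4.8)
The plan is to mirror the backward-induction argument used for Proposition~\ref{Prop: Riccati Recursion, Properties}, tracking where the extra scalar $q \in (0,1]$ enters. The base case is immediate: $W_q[N] = P$ is diagonal and symmetric positive definite by hypothesis. For the inductive step, I would suppose $W_q[n+1]$ is diagonal and symmetric positive definite for some $n \in \{0, \ldots, N-1\}$ and show the same properties descend to $W_q[n]$, treating the diagonality claim and the positive-definiteness claim in turn.

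For diagonality, the constant $q$ is harmless: since $A$ is diagonal, by~\eqref{AppEqn: Thm, Riccati Recursion for W, Iterative Recursion} it suffices to show $\mathbb{E}[\mathbf{M}[n]]$ is diagonal. With $W_q[n+1]$ diagonal, the $(i,j)$-entry of $\mathbf{M}[n]$ for $i \ne j$ is $W_{ij}[n+1]\,\mathbf{b}_i[n]\mathbf{b}_j[n] / \big(\sum_k \mathbf{b}_k[n]^2 W_{kk}[n+1]\big)$; conditioning on $\mathbf{b}_{-i}[n]$, the distribution of $\mathbf{b}_i[n]$ is symmetric about $0$ while this quantity is an odd function of $\mathbf{b}_i[n]$, so the conditional expectation, and hence $\mathbb{E}[\mathbf{M}_{ij}[n]]$, vanishes. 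Thus $W_q[n] = A^T\big(I - q\mathbb{E}[\mathbf{M}[n]]\big)W_q[n+1]A$ is a product of diagonal matrices, hence diagonal.

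For positive definiteness, I would rewrite the recursion exactly as in the $q=1$ case as
\[
    W_q[n] = A^T W_q[n+1]^{1/2}\, \mathbb{E}\!\left[ I - q\, \overline{\mathbf{B}}[n]\overline{\mathbf{B}}[n]^T \right] W_q[n+1]^{1/2} A,
\]
where $\overline{\mathbf{B}}[n] := W_q[n+1]^{1/2}\mathbf{B}[n] / \|W_q[n+1]^{1/2}\mathbf{B}[n]\|_2 \in \mathcal{S}_d$. Since $A$ is nonsingular and $W_q[n+1]^{1/2}$ is invertible, it suffices to show $\mathbb{E}[I - q\overline{\mathbf{B}}[n]\overline{\mathbf{B}}[n]^T]$ is symmetric positive definite. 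For any unit vector $\mathbf{v}$, $\mathbf{v}^T(I - q\overline{\mathbf{B}}[n]\overline{\mathbf{B}}[n]^T)\mathbf{v} = 1 - q(\overline{\mathbf{B}}[n]^T\mathbf{v})^2 \ge 1 - q \ge 0$ by Cauchy--Schwarz together with $q \le 1$. If $q < 1$ this is a strict pointwise bound $1-q > 0$, which survives taking expectations; if $q = 1$ one recovers the original argument, since equality forces $\overline{\mathbf{B}}[n] = \pm\mathbf{v}$, i.e.\ $\mathbf{B}[n]$ parallel to $W_q[n+1]^{-1/2}\mathbf{v}$, an event of probability zero under the uniform law on $\mathcal{S}_d$, so that $\mathbf{v}^T\mathbb{E}[I - \overline{\mathbf{B}}[n]\overline{\mathbf{B}}[n]^T]\mathbf{v} > 0$. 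Either way the expectation is symmetric positive definite, and the induction closes.

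I do not anticipate a genuine obstacle: the proof is a routine modification of that of Proposition~\ref{Prop: Riccati Recursion, Properties}, and the factor $q \le 1$ actually makes positive definiteness slightly easier (a uniform pointwise gap $1-q$ whenever $q < 1$). The only point needing a moment's care is retaining the $q=1$ boundary case, where a uniform gap is unavailable and one must instead invoke the probability-zero-collision argument.
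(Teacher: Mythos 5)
Your proof is correct and follows essentially the same route as the paper's: backward induction, the odd-function symmetry argument for diagonality of $\mathbb{E}[\mathbf{M}[n]]$, and the factorization through $\mathbb{E}\big[I - q\overline{\mathbf{B}}[n]\overline{\mathbf{B}}[n]^T\big]$ with the probability-zero-collision argument for positive definiteness. Your explicit split between the uniform gap $1-q>0$ when $q<1$ and the $q=1$ boundary case is a slightly cleaner presentation of the same observation the paper makes in one line.
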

\begin{proof}
First, we show via backward induction that each $W_q[n]$ is diagonal. By assumption $W_q[N] = P$ and $P$ is diagonal, so the assertion holds at time $N$. Suppose $W_q[n+1]$ is diagonal and symmetric positive definite for some $n = 0, 1, \cdots, N-1$. By \eqref{AppEqn: Thm, Riccati Recursion for W, Iterative Recursion}, and the fact that $A$ is diagonal, it suffices to show that $\mathbb{E}[\textbf{M}[n]]$ is diagonal. Let $\textbf{M}_{ij}[n]$ denote the $(i, j)$-th element of $\textbf{M}[n]$ for each $i, j \in \{1, \cdots, d\}$, let $\textbf{b}_i[n]$ denote the $i$-th element of $\textbf{B}[n]$  for each $i \in \{1, \cdots, d\}$, and let $W_{ij}[n+1]$ denote the $(i, j)$-th element of $W_q[n+1]$ for each $i, j \in \{1, \cdots, d\}$. By hypothesis, $W_q[n+1]$ is diagonal, so \eqref{AppEqn: Thm, Riccati Recursion for W, M[n] Definition} becomes:
\begin{align} \label{AppEqn: M[i, j] Definition}
    \textbf{M}_{ij}[n] := \frac{W_{ij}[n+1]\textbf{b}_i[n] \textbf{b}_j[n]}{\sum_{k=1}^d \textbf{b}_k[n]^2 W_{kk}[n+1]}.
\end{align}
We aim to show that $\textbf{M}_{ij}[n] = 0$ whenever $i \ne j$. To establish this, fix $i, j \in \{1, \cdots, d\}, i \ne j$ arbitrarily, and
let $\textbf{b}_{-i}[n]$ denote $(\textbf{b}_1, \cdots, \textbf{b}_{i-1}, \textbf{b}_{i+1}, \cdots, \textbf{b}_d)$. Then:
\begin{align} \nonumber
    \mathbb{E}\big[ \textbf{M}_{ij}[n] \big] &= \mathbb{E}\Bigg[ \frac{W_{ij}[n+1]\textbf{b}_i[n] \textbf{b}_j[n]}{\sum_{k=1}^d \textbf{b}_k[n]^2 W_{kk}[n+1]} \Bigg] \\ \nonumber
    &= \mathbb{E}\Bigg[ \mathbb{E}\Bigg[ \frac{W_{ij}[n+1]\textbf{b}_i[n] \textbf{b}_j[n]}{\sum_{k=1}^d \textbf{b}_k[n]^2 W_{kk}[n+1]} \Bigg|\textbf{b}_{-i}[n] \Bigg] \Bigg] \\
    \label{AppEqn: E[M], Equals 0}
    &= 0,
\end{align}
where 
\eqref{AppEqn: E[M], Equals 0} follows because, conditioned on any fixed tuple $\textbf{b}_{-i}[n]$, the distribution of $\textbf{b}_i[n]$ is symmetric around $\textbf{b}_i[n] = 0$, while:
\begin{align}
    \frac{W_{ij}[n+1]\textbf{b}_i[n] \textbf{b}_j[n]}{\sum_{k=1}^d \textbf{b}_k[n]^2 W_{kk}[n+1]}
\end{align}
is an odd function of $\textbf{b}_i[n]$.

Next, we show via backward induction that each $W_q[n]$ is symmetric positive definite. First:
\begin{align*} 
    &W_q[n] \\
    = \hspace{0.5mm} &A^T \big( W_q[n+1] - q\mathbb{E}[\textbf{M}[n]] W_q[n+1] \big) A \\
    = \hspace{0.5mm} &A^T W_q[n+1]^{1/2} q\mathbb{E}\left[ I - \frac{W_q[n+1]^{1/2} \textbf{B}[n] \textbf{B}[n]^T W_q[n+1]^{1/2}}{\textbf{B}[n]^T W_q[n+1] \textbf{B}[n]} \right] \\ 
    &\hspace{5mm} W_q[n+1]^{1/2} A \\
    = \hspace{0.5mm} &A^T W_q[n+1]^{1/2} \mathbb{E}\left[ I - q\overline{\textbf{B}}[n] \overline{\textbf{B}}[n]^T \right]  W_q[n+1]^{1/2} A,
\end{align*}
where we have defined the unit vector $\overline{\textbf{B}} \in \mathcal{S}_d$ by:
\begin{align}
    \overline{\textbf{B}}[n] := \frac{W_q[n+1]^{1/2} \textbf{B}[n]}{\Vert W_q[n+1]^{1/2} \textbf{B}[n] \Vert_2},
\end{align}
By assumption, $A$ is non-singular and $W_q[n+1]$ is diagonal and symmetric positive definite, it suffices to show that \mbox{$\mathbb{E}\big[I - q\overline{\textbf{B}}[n]  \overline{\textbf{B}}[n]^T \big]$} is symmetric positive definite. To this end, observe that for any two unit vectors $\textbf{v}, \overline{\textbf{B}}[n] \in \mathcal{S}_d$:
\begin{align}
    &\textbf{v}^T (I - q\overline{\textbf{B}}[n] \overline{\textbf{B}}[n]^T) \textbf{v} = 1 - q(\overline{\textbf{B}}[n]^T \textbf{v})^2 \\
    \geq \hspace{0.5mm} &1 - \big(\Vert \overline{\textbf{B}}[n] \Vert^2 \cdot \Vert \textbf{v} \Vert^2 \big) = 0,
\end{align}
with equality if and only if $q = 1$ and $\overline{\textbf{B}}[n] = \pm \textbf{v}$, i.e. if and only if $\textbf{B}[n]$ and the nonzero vector $W_q[n+1]^{-1/2} v$ were parallel. Since this occurs with probability zero when $\textbf{B}[n]$ is drawn uniformly from $\mathcal{S}_d$, we conclude that:
\begin{align}
    \textbf{v}^T \mathbb{E}\big[I - q\overline{\textbf{B}}[n] \overline{\textbf{B}}[n]^T \big] \textbf{v} &= \mathbb{E}\big[\textbf{v}^T \big( I - q\overline{\textbf{B}}[n] \overline{\textbf{B}}[n]^T \big) \textbf{v} \big] > 0,
\end{align}
so $\mathbb{E}\big[I - q\overline{\textbf{B}}[n]  \overline{\textbf{B}}[n]^T \big]$ is indeed symmetric positive definite. This concludes the proof.
\end{proof}

\begin{lemma} \label{AppLemma: Riccati Recursion}
Consider the system $\mathcal{S}$ starting at the fixed, known initial state $X[0]$ at time $0$, except that with probability $1-p$, we are required to set $u[n] = 0$ (so are unable to apply any control input to the system). Fix any diagonal, symmetric positive definite matrix $P \in \R^{d \times d}$. Then, for each $N, n \in \N$, $n = 0, 1, \cdots, N$:
\begin{equation} \label{AppEqn: Thm, Riccati Recursion for W, Main Statement}
    \min_{g_0^N \in \mathcal{G}_0^N} \mathbb{E}\left[ \textbf{\emph{X}}[N]^T P \textbf{\emph{X}}[N] \right] = \min_{g_0^n \in \mathcal{G}_0^n} \mathbb{E}\left[ \textbf{\emph{X}}[n]^T W_q[n] \textbf{\emph{X}}[n] \right],
\end{equation}
where \mbox{$\{W_q[n] \mid n = 0, 1, \cdots, N\}$} is given by the recursive formulas \eqref{AppEqn: Thm, Riccati Recursion for W, at time N} and \eqref{AppEqn: Thm, Riccati Recursion for W, Iterative Recursion}, and $W_q[N] = P$. In particular:
\begin{align} \nonumber
    \min_{g_0^N \in \mathcal{G}_0^N} \mathbb{E}\left[ \textbf{\emph{X}}[N]^T P \textbf{\emph{X}}[N] \right] =  \textbf{\emph{X}}[0]^T W_q[0] \textbf{\emph{X}}[0].
\end{align}
\end{lemma}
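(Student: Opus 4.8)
(Sketch)
The plan is to re-run the backward dynamic-programming induction of Lemma~\ref{Lemma: Riccati Recursion} (Appendix~\ref{AppSubsec: Thm: Riccati Recursion}) with one extra step to absorb the independent Bernoulli erasure; throughout, $q=p$. Proposition~\ref{AppProp: Riccati Recursion, Properties, Dropped Controls} guarantees that each $W_q[n]$ is diagonal and symmetric positive definite, so every one-step cost below is strictly convex and quadratic in $u[n]$, which is what keeps the induction and the uniqueness of the minimizer alive. I will let the controller observe, at time $n$, whether its step-$n$ control will be applied or forced to zero; this will turn out not to change the value, so the lemma also covers the model in which the erasures are not observed.

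I would first dispatch the base case $n=N$, where $W_q[N]=P$ by \eqref{AppEqn: Thm, Riccati Recursion for W, at time N}, so \eqref{AppEqn: Thm, Riccati Recursion for W, Main Statement} is immediate. For the inductive step, assume \eqref{AppEqn: Thm, Riccati Recursion for W, Main Statement} at level $n+1$, substitute the dynamics \eqref{Eqn: System Dynamics} into $\mathbb{E}[\textbf{X}[n+1]^T W_q[n+1]\textbf{X}[n+1]]$, and condition on the time-$n$ erasure: on the no-drop event (probability $p$) the one-step cost for fixed $\textbf{B}[n],\textbf{X}[n]$ is $\textbf{B}[n]^T W_q[n+1]\textbf{B}[n]\,u[n]^2 + 2\,\textbf{B}[n]^T W_q[n+1]A\textbf{X}[n]\,u[n] + \textbf{X}[n]^T A^T W_q[n+1]A\textbf{X}[n]$, and on the drop event (probability $1-p$) the realized input is $0$ and the cost is $\textbf{X}[n]^T A^T W_q[n+1]A\textbf{X}[n]$.

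The genuinely new step is the one-step minimization over $u[n]$. Because the drop branch contains no $u[n]$, the $u[n]$-dependent part of the $p$-weighted sum of the two costs is $p\big(\textbf{B}[n]^T W_q[n+1]\textbf{B}[n]\,u[n]^2 + 2\,\textbf{B}[n]^T W_q[n+1]A\textbf{X}[n]\,u[n]\big)$, whose unique minimizer is the same
\begin{align}
    u^\star[n] = -\frac{\textbf{B}[n]^T W_q[n+1]A\textbf{X}[n]}{\textbf{B}[n]^T W_q[n+1]\textbf{B}[n]}
\end{align}
as in the erasure-free case, whether or not $u[n]$ is permitted to depend on the time-$n$ erasure. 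Plugging in $u^\star[n]$, using the definition \eqref{AppEqn: Thm, Riccati Recursion for W, M[n] Definition} of $\textbf{M}[n]$, and taking expectation over $\textbf{B}[n]$ conditioned on $\textbf{B}_0^{n-1}$ gives minimum one-step value $\mathbb{E}\big[\textbf{X}[n]^T A^T(W_q[n+1]-q\,\textbf{M}[n]W_q[n+1])A\textbf{X}[n]\mid \textbf{B}_0^{n-1}\big]$; the law of iterated expectations and the independence of $\textbf{B}[n]$ from $(\textbf{B}_0^{n-1},\textbf{X}[n])$ then reduce this to $\mathbb{E}[\textbf{X}[n]^T A^T(W_q[n+1]-q\,\mathbb{E}[\textbf{M}[n]]W_q[n+1])A\textbf{X}[n]] = \mathbb{E}[\textbf{X}[n]^T W_q[n]\textbf{X}[n]]$ by \eqref{AppEqn: Thm, Riccati Recursion for W, Iterative Recursion}. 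Exactly as in Lemma~\ref{Lemma: Riccati Recursion}, the standard dynamic-programming argument now upgrades this to \eqref{AppEqn: Thm, Riccati Recursion for W, Main Statement} at level $n$ (the lower bound because the step-$n$ cost can only exceed its conditional minimum, the upper bound because the strategy that plays $u^\star[n]$ on the no-drop event attains it). Iterating down to $n=0$ and using that $\textbf{X}[0]$ is deterministic gives $\min_{g_0^N\in\mathcal{G}_0^N}\mathbb{E}[\textbf{X}[N]^T P\textbf{X}[N]] = \textbf{X}[0]^T W_q[0]\textbf{X}[0]$.

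I expect the main obstacle to be exactly that one-step minimization: one must check that the independent erasure neither moves the minimizer $u^\star[n]$ nor alters the Riccati update beyond inserting the scalar $q=p$ in front of $\mathbb{E}[\textbf{M}[n]]$, and that the value does not depend on whether the controller sees the erasure before committing to $u[n]$. All three facts collapse to the single observation that the dropped-control cost $\textbf{X}[n]^T A^T W_q[n+1]A\textbf{X}[n]$ is free of $u[n]$, so it merely shifts the objective by a constant and leaves the argmin unchanged; everything else is a verbatim replay of the proof of Lemma~\ref{Lemma: Riccati Recursion}, with the positive definiteness of the $W_q[n]$ furnished by Proposition~\ref{AppProp: Riccati Recursion, Properties, Dropped Controls}.
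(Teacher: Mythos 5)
Your proposal is correct and follows essentially the same route as the paper's own proof: backward induction with the same base case, conditioning on the erasure event, observing that the drop branch is free of $u[n]$ so the minimizer $u^\star[n]$ is unchanged, and averaging the two branches to obtain the factor $q=p$ in front of $\mathbb{E}[\textbf{M}[n]]$ via iterated expectations, exactly as in \eqref{AppEqn: Proof, Riccati Theorem, Moving the Expectation, Last Expression}. Your additional remark that the value is unaffected by whether the controller observes the time-$n$ erasure is a harmless clarification consistent with the paper's treatment.
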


\begin{proof}
The equation \eqref{AppEqn: Thm, Riccati Recursion for W, at time N} establishes \eqref{AppEqn: Thm, Riccati Recursion for W, Main Statement} at time $N$. Suppose \eqref{AppEqn: Thm, Riccati Recursion for W, Main Statement} holds at times $n+1, \cdots, N$ for some \mbox{$n = 0, 1, \cdots, N-1$}. Then, by invoking \eqref{AppEqn: Thm, Riccati Recursion for W, Iterative Recursion} at time $n+1$, \emph{conditioned on us being in the scenario with probability $p$ when we can apply controls}, we have:
\begin{align} \nonumber
    &\mathbb{E}\left[ \textbf{X}[n+1]^T W_q[n+1] \textbf{X}[n+1] \right] \\ \label{AppEqn: Proof, Riccati Recursion, Plug in dynamics}
    = \hspace{0.5mm} &\mathbb{E}\big[ (\textbf{X}[n]^T A^T + \textbf{B}[n]^T u[n] ) W_q[n+1] (A \textbf{X}[n] + \textbf{B}[n] u[n] ) \big] \\ \label{AppEqn: Proof, Riccati Recursion, Quadratic Cost}
    = \hspace{0.5mm} & \mathbb{E}\big[\textbf{B}[n]^T W_q[n+1] \textbf{B}[n] \cdot u[n]^2 \\ \nonumber
    &\hspace{5mm} + 2 \textbf{B}[n]^T W_q[n+1] A \textbf{X}[n] \cdot u[n] \\ \nonumber
    &\hspace{5mm} + \textbf{X}[n]^T A^T W_q[n+1] A \textbf{X}[n] \big], 
\end{align}
where \eqref{AppEqn: Proof, Riccati Recursion, Plug in dynamics} results from substituting in the system dynamics \eqref{Eqn: System Dynamics}. Since $W_q[n+1]$ is symmetric positive definite, and $\Vert \textbf{B}[n] \Vert_2 = 1$, \eqref{AppEqn: Proof, Riccati Recursion, Quadratic Cost} is strictly convex and quadratic in $u[n]$. Thus, by setting the derivative of \eqref{AppEqn: Proof, Riccati Recursion, Quadratic Cost} with respect to $u[n]$ equal to 0, we find that its unique minimizer is given by:
\begin{align} \label{AppEqn: Proof, Riccati Theorem, Minimizing u[n]}
    u^\star[n] := - \frac{\textbf{B}[n]^T W_q[n+1] A \textbf{X}[n]}{\textbf{B}[n]^T W_q[n+1] \textbf{B}[n]}
\end{align}
The corresponding minimum value of \eqref{AppEqn: Proof, Riccati Recursion, Quadratic Cost} can now be obtained by substituting \mbox{$u[n] = u^\star[n]$} and the definition of $\textbf{M}[n]$ from \eqref{AppEqn: Thm, Riccati Recursion for W, M[n] Definition}, and applying the law of iterated expectations, no longer conditioning that we are in the scenario where we can apply control:
\begin{align} 
    &p\mathbb{E}\Bigg[ \textbf{X}[n]^T A^T \\ \nonumber
    &\hspace{5mm} \Bigg( W_q[n+1] - \frac{W_q[n+1] \textbf{B}[n] \textbf{B}[n]^T W_q[n+1]}{ \textbf{B}[n]^T W_q[n+1] \textbf{B}[n]} \Bigg) A \textbf{X}[n] \Bigg] \\ %
    &\hspace{5mm} + (1-p)\mathbb{E}\left[ \mathbf{X}[n]^TA^TW_q[n+1]A\mathbf{X}[n]\right]\nonumber \\
    = \hspace{0.5mm} &\mathbb{E}\big[\textbf{X}[n]^T A^T W_q[n+1] \big( I - p\textbf{M}[n] \big) A \textbf{X}[n] \big] \\ 
    = \hspace{0.5mm} &\mathbb{E}\big[\mathbb{E}\big[\textbf{X}[n]^T A^T W_q[n+1] \big( I - p\textbf{M}[n] \big) A \textbf{X}[n] \mid \textbf{B}_0^{n-1} \big] \big] \\ \nonumber
    = \hspace{0.5mm} &\mathbb{E}\big[ \textbf{X}[n]^T A^T W_q[n+1] \big( I - p\mathbb{E}\big[ \textbf{M}[n] \mid \textbf{B}_0^{n-1} \big] \big) A \textbf{X}[n]  \big] \\ \nonumber
    = \hspace{0.5mm} &\mathbb{E}\big[ \textbf{X}[n]^T A^T W_q[n+1] \big( I - p\mathbb{E}\big[ \textbf{M}[n] \big] \big) A \textbf{X}[n] \big] \\ \label{AppEqn: Proof, Riccati Theorem, Moving the Expectation, Last Expression}
    = \hspace{0.5mm} &\mathbb{E}\big[ \textbf{X}[n]^T W_q[n] \textbf{X}[n] \big].
\end{align}
We conclude that:
\begin{equation}
    \mathbb{E}\big[ \textbf{X}[n+1]^T W_q[n+1] \textbf{X}[n+1] \big] \geq \mathbb{E}\big[ \textbf{X}[n]^T W_q[n] \textbf{X}[n] \big]
\end{equation}
with equality if and only if the optimal control:
\begin{align} \label{AppEqn: Optimal Control}
    u[n] = u^\star[n] := - \frac{\textbf{B}[n]^T W_q[n+1] A \textbf{X}[n]}{\textbf{B}[n]^T W_q[n+1] \textbf{B}[n]}
\end{align}
is applied. This completes the recursion step and, as a result, establishes \eqref{AppEqn: Thm, Riccati Recursion for W, Main Statement}.

\end{proof}

\begin{lemma} \label{AppLemma: Main}
Consider the system $\mathcal{S}$ with controls dropped at each timestep with probability $1 - q$, as in Lemma \ref{AppLemma: Riccati Recursion}, and with some fixed, known initial state $X[0]$. Then we can show that there exists some $r \in \R$, $r > 0$ and some diagonal, symmetric positive definite matrix $P \in \R^{d \times d}$:
    \begin{align} \label{AppEqn: Lemma, Main, P Definition}
        P := \text{diag}\{ p_1, p_2, \cdots, p_d \}
    \end{align}
    such that the matrices $W_q[N], \cdots, W_q[0]$ generated by the Riccati recursion \eqref{AppEqn: Thm, Riccati Recursion for W, at time N} from $W_q[N] = P$, additionally satisfy:
    \begin{align} \label{AppEqn: Lemma, Main, W_q[n] sequence is geometric}
        W_q[n] = r^{N-n} \cdot P.
    \end{align}
    for each $n = 0, 1, \cdots, N$, for all $A$ where
    \begin{align} \label{AppEqn: Lemma, Main, Constraint on lambdas}
        1 - \frac{(d-q)\lambda_i^{-2}}{\sum_{j=1}^d \lambda_j^{-2}} > 0
    \end{align}
    for all $1 \le i \le d$.
    
Moreover, $r$ is given by:
\begin{align} \label{AppEqn: Lemma, Main, Formula for r}
    r = \frac{d-q}{\sum_{i=1}^d \lambda_i^{-2}}
\end{align}
\end{lemma}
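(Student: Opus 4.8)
The plan is to mirror the proof of Lemma~\ref{Lemma: Main}, replacing the constant $d-1$ by $d-q$ and carrying the extra factor $q$ that the dropped-control recursion~\eqref{AppEqn: Thm, Riccati Recursion for W, Iterative Recursion} places in front of $\mathbb{E}[\mathbf{M}[n]]$. By Proposition~\ref{AppProp: Riccati Recursion, Properties, Dropped Controls}, every $W_q[n]$ is diagonal and symmetric positive definite, so writing $w_{q,n,i}$ for its $i$-th diagonal entry and $m_{n,i} := \mathbb{E}[\mathbf{M}_{ii}[n]]$, the recursion reduces to the scalar system $w_{q,n,i} = \lambda_i^2\, w_{q,n+1,i}\,(1-q\,m_{n,i})$, with $m_{n,i}$ depending on $W_q[n+1]$ only through the ratios $w_{q,n+1,i}/w_{q,n+1,k}$ (this is exactly the form appearing in~\eqref{Eqn: Lemma, Analysis, Existence of w, Eqn. 1}).

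First I would pin down what $r$ and the targets for $m_{n,i}$ must be. Requiring $w_{q,n,i} = r\, w_{q,n+1,i}$ for every $i$ forces $\lambda_i^2(1-q\,m_{n,i}) = r$, i.e. $m_{n,i} = q^{-1}(1-r\lambda_i^{-2})$. Lemma~\ref{Lemma: Linear Constraint} gives $\sum_{i=1}^d m_{n,i} = 1$, and substituting yields $d - r\sum_i \lambda_i^{-2} = q$, hence $r = (d-q)/\sum_i \lambda_i^{-2}$, which is precisely~\eqref{AppEqn: Lemma, Main, Formula for r}. I therefore set $v_i^{(q)} := q^{-1}\bigl(1 - (d-q)\lambda_i^{-2}/\sum_{j=1}^d \lambda_j^{-2}\bigr)$; each $v_i^{(q)} > 0$ by hypothesis~\eqref{AppEqn: Lemma, Main, Constraint on lambdas} together with $q>0$, and $\sum_i v_i^{(q)} = 1$ by the computation just made.

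Next, I would apply Lemma~\ref{Lemma: Analysis, Existence of w(N, 1), w(N, 2)} to the tuple $(v_1^{(q)}, \dots, v_d^{(q)})$ to obtain $p_1^\star, \dots, p_d^\star > 0$ realizing those target values, set $P^\star := \text{diag}\{p_1^\star, \dots, p_d^\star\}$, and generate $W_q[N], \dots, W_q[0]$ from $W_q[N] = P^\star$ via~\eqref{AppEqn: Thm, Riccati Recursion for W, at time N}--\eqref{AppEqn: Thm, Riccati Recursion for W, Iterative Recursion}; by construction $m_{N-1,i} = v_i^{(q)}$. Then a backward induction shows $W_q[n] = r^{N-n} P^\star$: the base case $n = N$ is immediate, and assuming $W_q[n+1] = r^{N-n-1} P^\star$, the diagonal ratios equal $p_i^\star/p_k^\star$ and so are $n$-independent, whence $m_{n,i} = m_{N-1,i} = v_i^{(q)}$; feeding this into the scalar recursion gives $w_{q,n,i} = \lambda_i^2 w_{q,n+1,i}(1-q v_i^{(q)}) = \lambda_i^2 w_{q,n+1,i}\cdot r\lambda_i^{-2} = r\, w_{q,n+1,i}$, closing the induction.

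The only delicate point is the bookkeeping around $q$: one must verify that the fixed-point values $v_i^{(q)}$ remain admissible inputs for Lemma~\ref{Lemma: Analysis, Existence of w(N, 1), w(N, 2)} (positivity is exactly~\eqref{AppEqn: Lemma, Main, Constraint on lambdas}, summing to $1$ is forced by Lemma~\ref{Lemma: Linear Constraint}), and that Proposition~\ref{AppProp: Riccati Recursion, Properties, Dropped Controls} really does preserve diagonality and positive-definiteness of each $W_q[n]$ for all $q \in (0,1]$, so that the scalar reduction is legitimate at every step. Granting these, the argument is structurally identical to that of Lemma~\ref{Lemma: Main}, so I expect no substantive obstacle beyond the constant-tracking.
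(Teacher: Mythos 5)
Your proposal is correct and follows the same route as the paper's own proof: derive $r$ from the trace identity of Lemma~\ref{Lemma: Linear Constraint}, invoke Lemma~\ref{Lemma: Analysis, Existence of w(N, 1), w(N, 2)} to construct $P^\star$ realizing the target values of $m_{N-1,i}$, and close with a backward induction using the scale-invariance of $m_{n,i}$ in the diagonal entries of $W_q[n+1]$. Your constant-tracking is in fact more careful than the paper's: by targeting $m_{N-1,i} = v_i^{(q)} = q^{-1}\bigl(1-(d-q)\lambda_i^{-2}/\sum_{j}\lambda_j^{-2}\bigr)$ you guarantee both that the targets sum to $1$ (a hypothesis of Lemma~\ref{Lemma: Analysis, Existence of w(N, 1), w(N, 2)} --- the paper's unscaled $v_i^\star$ sum to $q$, not $1$) and that $\lambda_i^2(1-q\,v_i^{(q)})=r$ exactly, whereas the paper's write-up sets $m_{N-1,i}=v_i^\star$ and then silently drops the factor $q$ in its final ratio computation.
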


\begin{proof} (\textbf{Proof of Lemma \ref{AppLemma: Main}})
To aid us in selecting an appropriate $P$, define:
\begin{align} \label{AppEqn: v(N, 1) star formula}
    v_i^* = 1 - \frac{(d-q)\lambda_i^{-2}}{\sum_j \lambda_j^{-2}}
\end{align}
for all $1 \le i \le d$. Since $\lambda_i > 0$, we have that $v_{i}^* < 1$ for each $i$. From our assumption \eqref{AppEqn: Lemma, Main, Constraint on lambdas}, we have that $v_{i} > 0$ for all $i$. Lemma \ref{Lemma: Analysis, Existence of w(N, 1), w(N, 2)} then implies the existence of some $\{ p_i^\star \}$ such that the corresponding $m_{N-1, i} = v_{i}^\star$ for all $i$, where $m_{n, i}$ is defined to equal $\mathbb{E}[\textbf{M}_{ii}[n]]$. Now, set:
\begin{align} \nonumber
    P^\star := \text{diag}\{ p^*_1, p^*_2, \cdots, p_d^* \},
\end{align}
and define $\{W_q[n]\}_{n=0}^N$ by the Riccati recursion \eqref{AppEqn: Thm, Riccati Recursion for W, at time N}, \eqref{AppEqn: Thm, Riccati Recursion for W, Iterative Recursion}.

Below, we establish \eqref{AppEqn: Lemma, Main, W_q[n] sequence is geometric} by backwards induction from $n = N$. Since $W_q[N] = P$, \eqref{AppEqn: Lemma, Main, W_q[n] sequence is geometric} holds at time $n = N$. Now, suppose \eqref{AppEqn: Lemma, Main, W_q[n] sequence is geometric} holds at time $n+1$, for some $n = 0, 1, \cdots, N-1$. Then for any indices $i, j$:
\begin{align} \nonumber
    \frac{w_{n+1, i}^\star}{w_{n+1, j}^\star} = \frac{p_i^\star}{p_j^\star}.
\end{align}
By its definition in Proposition \ref{Prop: Riccati Recursion, Properties},
\begin{align}
    w_{n, i}^\star &= \lambda_i^2w_{n+1, i}^\star (1 - qm_{n, i}).
\end{align}
Observe that $m_{n,i}$ depends only on the ratios $w_{n+1,i} / w_{n+1,k}$ for all $1 \le k \le d$. By our inductive assumption, these ratios equal $p_i^\star / p_k^\star$, and so are independent of $n$. Thus,
\[
    m_{n,i} = m_{N-1,i} = v^\star_i.
\]
Applying this to our recursion for $\{ w_{n, i} \}_{n=1}^\infty$, we find that
\begin{align} \label{AppEqn: Lemma, Main, W}
    w^\star_{n,i} = \lambda_i^2w^\star_{n+1,i}(1 - qm_{n, i}) = \lambda_i^2w^\star_{n+1,i}(1 - qv_{i}^\star).
\end{align}

We claim that $W_q[n] = r \cdot W_q[n+1]$, where 
\begin{align} \nonumber
    r = \frac{d-q}{\sum_{i=1}^d \lambda_i^{-2}}.
\end{align}
This follows by rearranging terms in \eqref{AppEqn: Lemma, Main, W}, where by the definitions of $v_{i}^\star$:
\begin{align*}
    \frac{w_{n, i}^\star}{w_{n+1, i}^\star} &= \lambda_i^2(1 - v_{i}^\star) = \frac{d-q}{\sum_j \lambda_j^{-2}}.
\end{align*}
Thus, $w_{n, i}^\star = p_i^\star \cdot r^{N-n}$ , with
 $ r = \frac{(d-q)}{\sum_i \lambda_i^{-2}}.$
This completes the proof.

\end{proof}

\begin{proof} \textbf{(Proof of Theorem \ref{Thm: Main, d-dimensional} in Case 2)}
Without loss of generality, we can express the state matrix $A$ of our system $\mathcal{S}$ as
\begin{align} \label{AppEqn: Sub-State Matrix}
    A := \begin{bmatrix}
    A' & O \\
    O & \Lambda
    \end{bmatrix},
\end{align}
where all the eigenvalues $\lambda_1, \cdots, \lambda_m$ of $A'$ are greater than $1$ in magnitude, while all the eigenvalues of $\Lambda = diag(\lambda_{m+1}, \ldots, \lambda_d) \in \R$ are strictly less than $1$ in magnitude.

Define the system $\overline{\mathcal{S}}$ to be:
\begin{align} \label{AppEqn: Embedded System}
    \overline{\mathcal{S}}: \hspace{5mm} \textbf{\emph{X}}'[n+1] &= A' \textbf{\emph{X}}'[n] + \textbf{\emph{B}}'[n]u[n], \\ \nonumber 
    \textbf{\emph{Y}}'[n] &= \textbf{\emph{X}}'[n].
\end{align}
To prove the second part of the lemma, observe that if $r > 1$, then by Theorem \ref{Thm: Main, d-dimensional} in Case 1, $\overline{\mathcal{S}}$ is not second-moment stabilizable. Then, by Lemma \ref{Lemma: Embedded System}, $\mathcal{S}$ is not second-moment stabilizable either. Thus, if $\mathcal{S}$ is second-moment stabilizable, $\overline{\mathcal{S}}$ must be second-moment stabilizable too, so $r \le 1$.

Now we will show that $r < 1$ implies that $\mathcal{S}$ is second-moment stabilizable. At a high-level, we will consider the embedded system $\overline{\mathcal{S}}$ within $\mathcal{S}$, which we know from Lemma \ref{AppLemma: Main} is stabilizable with some control strategy $u'[n]$. We will essentially use $u'[n]$ to stabilize the full system $\mathcal{S}$, after rescaling it appropriately to form some $u[n]$. For the $d-m$ dimensions in $\mathcal{S}$ but not in $\overline{\mathcal{S}}$, we will rely on their $|\lambda_i| < 1$ to let the component of the state in that direction decay over time. However, rescaling $u'[n]$ to form $u[n]$ means that $u[n]$ could be unbounded, since the rescaling factor could be arbitrarily large. To handle this, we will simply ``drop'' our control input (setting $u[n] = u'[n] = 0$) if the rescaling factor exceeds a threshold. To ensure that this control dropping does not prevent us from stabilizing $\overline{\mathcal{S}}$, we will choose our threshold appropriately, relying on the ``slack'' available in $\overline{\mathcal{S}}$, since $r$ is strictly less than $1$, so we can afford to increase it by some small amount without causing the subsystem to become unstabilizable.

Consider the simultaneous evolution of systems $\mathcal{S}$ and $\overline{\mathcal{S}}$. Let $T$ be an $m \times d$ matrix defined to be
\[
    T = \begin{bmatrix} I_{m} & O \end{bmatrix}.
\]
Observe that $TA = A'T$. By Lemma \ref{AppSubsec: Lemma: Expanding or Projecting Dimension}(2), we can draw $\mathbf{B}'[n] = T \mathbf{B}[n] / \|T \mathbf{B}[n]\|$
since this yields a uniform distribution over the unit $m$-dimensional hypersphere. Let us couple $\mathcal{S}$ and $\overline{\mathcal{S}}$ in such a manner. For a given initial state $\vec{x}[0]$ for system $\mathcal{S}$, let the initial state $\vec{x}'[0]$ for $\overline{\mathcal{S}}$ be $T \vec{x}[0]$.

Consider a pair of control strategies for our two systems, producing control input $u[n]$ for system $\mathcal{S}$ and $u'[n]$ for system $\overline{\mathcal{S}}$ at time $n$, related by
\begin{align} \label{AppEqn: Coupled Control Relation}
    u'[n] = u[n] \cdot \| T \mathbf{B}[n] \|
\end{align}
at time step $n$. With these inputs,
\begin{align*}
    \mathbf{X}'[n+1] &= A' \mathbf{X}'[n] + (T \mathbf{B}[n] / \| T \mathbf{B}[n] \|) (u[n] \cdot \| T \mathbf{B}[n] \|) \\
    &= A' \mathbf{X}'[n] + T \mathbf{B}[n] u[n].
\end{align*}
We claim that $\mathbf{X}'[n] = T\mathbf{X}[n]$. To prove this inductively, assume that it is true for a particular $n$. Then, at timestep $n + 1$,
\begin{align}
    \mathbf{X}'[n+1] &= A' T \mathbf{X}[n] + T \mathbf{B}[n]u[n] \\
    &= T(A\mathbf{X}[n] + \mathbf{B[n]}u[n]) \\
    &= T\mathbf{X}[n+1].
\end{align}
Since we assumed that at $n = 0$, $\mathbf{X}'[0] = T\mathbf{X}[0]$, we have shown this claim via induction for all $n$.

Let $h \in (1, \infty)$ be a parameter to be chosen later. If $1/\| T \mathbf{B}[n] \| \ge h$, we will ``drop'' the control input to $\overline{\mathcal{S}}$. Let $q' = \mathbb{P}(1/\| T \mathbf{B}[n] \| \ge h)$.

Let $q$ be a parameter in $(0, 1)$ and let
\[
    r' = \frac{m-q}{\sum_{i=1}^m \lambda_i^{-2}} > r.
\]
Since $r < 1$, there exists a choice of $q$ such that $r' < 1$. Consider this value of $q$. Observe that $q'$ is a continuous function of $h$, since the $\mathbf{B}[n]$ have a continuous distribution. As $h \to 1$, $q' \to 1$, and as $h \to \infty$, $p' \to 0$. So there exists some choice of $h$ such that $q = 1 - q'$. We will choose this value of $h$. Therefore, the probability of our control of $\overline{\mathcal{S}}$ being dropped is $1 - q$.

Since $r' < 1$ and $|\lambda_i| > 1$ for $1 \le i \le m$, we have that
\[
    v_i^* = 1 - \frac{(d-q)\lambda_i^{-2}}{\sum_{j=1}^m \lambda_j^{-2}} = 1 - \lambda_i^{-2}r' > 0
\]
for $1 \le i \le m$. Therefore, from Lemma \ref{AppLemma: Main}, since $r' < 1$, there exists a control strategy to stabilize the system $\overline{\mathcal{S}}$, even if our control input is dropped with probability $q'$.

We will choose our control input $u[n]$ at each time step in the following manner. If $1/\| T \mathbf{B}[n] \| \ge h$, apply $u[n] = 0$ (and $u'[n] = 0$ in the coupled system $\overline{\mathcal{S}}$). Otherwise, consider the $u'[n]$ applied by our known control strategy \eqref{AppEqn: Optimal Control} in the coupled system $\overline{\mathcal{S}}$. Then, for the system $\mathcal{S}$, apply control input $u[n] = u'[n] / \| T \mathbf{B}[n] \|$. 

We wish to show that this control strategy second-moment stabilizes $\mathcal{S}$. Since we drop our control with probability $q'$ at each time step, Lemma \ref{AppLemma: Main} states that the coupled system $\overline{\mathcal{S}}$ is stabilized by our control strategy. Since $u[n]$ and $u'[n]$ are related by \eqref{AppEqn: Coupled Control Relation}, we have that $\mathbf{X}'[n] = T \mathbf{X}[n]$ at every time step. 

We know that the optimal control $u'[n]$ applied is as stated in \eqref{AppEqn: Optimal Control}, where the $W_q[n]$ are determined using the Riccati recursion over the subsystem $\overline{\mathcal{S}}$, starting from the $P$ constructed in Lemma \ref{AppLemma: Main}. 

From \eqref{AppEqn: Optimal Control}, we have that
\begin{align} \label{AppEqn: Embedded control magnitudes}
    \mathbf{B}'[n] u'[n] = \mathbf{B}'[n] \frac{\textbf{B}'[n]^T W_q[n+1] A \textbf{X}'[n]}{\textbf{B}'[n]^T W_q[n+1] \textbf{B}'[n]}.
\end{align}
We claim that the magnitude of this vector is bounded by $K \| \mathbf{X}'[n] \|$ for some $K > 0$ independent of $n$. To see this, recall that $W_q[n] = r^{N-n}P$, so we can bound
\[
    \| \mathbf{B}'[n] u'[n] \| = \frac{\|\textbf{B}'[n]^T W_q[n+1] A \textbf{X}'[n]\|}{\|\textbf{B}'[n]^T W_q[n+1] \textbf{B}'[n]\|}.
\]
Observe that
\begin{align*}
    &\| \mathbf{B}'[n]^TW_q[n+1]A\mathbf{X}'[n] \| \\
    \le& \| \mathbf{B}'[n] \| \cdot \| W_q[n+1]A\mathbf{X}'[n] \| \\
    \le& \| \mathbf{B}'[n] \| \cdot \sqrt{\lambda_{max}(W_q[n+1]A)} \cdot \| \mathbf{X}'[n] \| \\
    =& \sqrt{\lambda_{max}(W_q[n+1]A)} \cdot \| \mathbf{X}'[n] \|.
\end{align*}
Thus,
\begin{align}
    \| \mathbf{B}'[n] u'[n] \| &\le \frac{\sqrt{\lambda_{max}(W_q[n+1]A)} \| \mathbf{X}'[n] \|}{\sqrt{\lambda_{min}(W_q[n+1])}} \\
    &\le \sqrt{\frac{\lambda_{max}(r^{N-n}P) \lambda_{max}(A)}{\lambda_{min}(r^{N-n}P)}} \| \mathbf{X}'[n] \| \label{AppEqn: Max Lambda Product} \\
    &= \sqrt{\frac{\lambda_{max}(P) \lambda_{max}(A)}{\lambda_{min}(P)}} \| \mathbf{X}'[n] \| \\
    &= K \| \mathbf{X}'[n] \|
\end{align}
for some constant $K$ independent of $n$. Recall that $P$ and $A$ are both diagonal matrices, justifying \eqref{AppEqn: Max Lambda Product}.

Since $\overline{\mathcal{S}}$ is stabilizable under the control input $u'[n]$, $\mathbb{E}[\| \mathbf{X}'[n] \|^2]$ is bounded in magnitude by some $L$. Thus, $\mathbb{E}[u'[n]^2]$ is bounded in magnitude by $KL$.

Thus, to show that $\mathbb{E}[\mathbf{X}[n]^2]$ does not go to infinity, it remains to show that $\mathbb{E}[\mathbf{X}_i[n]^2]$ is bounded as $n \to \infty$ for $m + 1 \le i \le d$. Consider any such $i$. We can express
\[
    x_i[n+1] = \lambda_i x_i[n] + u[n] \mathbf{b}_i.
\]
Since $u[n]$ depends only on the magnitude of $\mathbf{B}$, and on the values of its first $m$ components, it is independent of the sign of $\mathbf{b}_i$, so we may write
\begin{align*}
    \mathbb{E}[x_i^2[n+1]] =& \lambda_i^2 \mathbb{E}[x_i[n]^2] + \mathbb{E}[u[n]^2\mathbf{b}_i^2] \\
    &+ 2\lambda_i \mathbb{E}[x_i[n]u[n] \cdot |\mathbf{b}_i|] \mathbb{E}[\text{sign}(\mathbf{b}_i)] \\
    \le& \lambda_i^2 \mathbb{E}[x_i^2[n]] + (hKL)^2.
\end{align*}
Evaluating this recurrence, we find that
\begin{align*}
    \mathbb{E}[x_i[n]^2] &= \mathbb{E}[x_i[0]^2] \lambda^{2n} + (hKL)^2 \frac{\lambda_i^{2n} - 1}{\lambda_i^2 - 1}.
\end{align*}
Since $|\lambda_i| < 1$ for $m + 1 \le i \le d$, this expectation does not go to infinity in the limit. Since this is true for all $m + 1 \le i \le d$, $\mathcal{S}$ is second-moment stabilizable if $r < 1$, completing the proof.
\end{proof}

\addtolength{\textheight}{-3cm}

\end{document}